\theoremstyle{plain}
\newtheorem{thm}{Theorem}[section]
\numberwithin{equation}{section} 
\numberwithin{figure}{section}
\theoremstyle{plain}
\newtheorem{corollary}[thm]{Corollary}
\theoremstyle{plain}
\newtheorem{lemma}[thm]{Lemma}
\theoremstyle{plain}
\newtheorem{prop}[thm]{Proposition}
\theoremstyle{definition}
\newtheorem{defn}[thm]{Definition}
\theoremstyle{remark}
\newtheorem{remark}[thm]{Remark}
\theoremstyle{remark}
\newtheorem*{remark*}{Remark}
\theoremstyle{remark}
\theoremstyle{plain}
\theoremstyle{plain}
\theoremstyle{plain}
\theoremstyle{plain}
\newcommand{\noun}[1]{\textsc{#1}}
\newcommand{\len}{\operatorname{len}}
\begin{document}
\title[Counting closed geodesics]{Counting closed geodesics
on rank one manifolds}
\author{Roland Gunesch}
\address{Universit\"at Hamburg, Department Mathematik, 
SP Dgl. \& Dynami\-sche Systeme, 
Bundesstr. 55, 20146 Hamburg, Germany}
\email{gunesch{\relax}@{\relax}math{.}uni-hamburg{.}de}
\date{}
\begin{abstract}
We establish a precise asymptotic formula for the number of homotopy classes of
periodic
orbits for the geodesic flow on rank one manifolds of nonpositive curvature.
This extends a celebrated result of  G.~A.~Margulis to the nonuniformly
hyperbolic
case and strengthens previous results by G.~Knieper.

We also establish some useful properties of the measure of
maximal entropy.
\end{abstract}
\subjclass[2000]{53D25 (Geodesic flows),
32Q05 (Negative curvature
manifolds), 37D40 (Dynamical systems of geometric origin and
hyperbolicity (geodesic and horocycle flows, etc.)), 37C27 (Periodic orbits of
vector fields and
flows), 37C35 (Orbit growth).
}
\keywords{Geodesic flows -- nonpositive curvature -- rank one manifolds -- closed
geodesics -- asymptotic counting of periodic orbits -- nonuniform
hyperbolicity --
measure of maximal
entropy.}


\thanks{The author thanks Anatole B. Katok for
suggesting
this problem and for valuable comments,  helpful discussions and precious
support.
Gerhard Knieper provided helpful discussions during the
creation of this work.
Parts of this work were completed while the author was at the Isaac Newton
Institute
for the Mathematical Sciences, Cambridge, and at the ETH Z\"urich.}

\maketitle

\section{Introduction}

\subsection{Manifolds of rank one}

Let \( M \) be a compact Riemannian manifold with all sectional curvatures
nonpositive. 
For a
vector \( v\in TM, \)
the {\bf rank} of \( v \) is the dimension of the vector space of parallel 
Jacobi fields along the geodesic tangent to \( v. \) The rank of \( M \)
is the minimal rank of all tangent vectors. 
Obvious consequences of this definition are that 
\[
1\leq \textrm{rank}(M)\leq \dim (M),\]
 that the rank of \( \mathbb {R}^{k} \) with the flat metric is \( k \) and
that 
\[
\textrm{rank}(M\times N)=\textrm{rank}(M)+\textrm{rank}(N).\]
Every manifold whose sectional curvature is never zero is automatically of rank
one. Products with Euclidean \( n \)-space clearly have rank at least \( n+1.
\)
However, it is possible for a manifold to be everywhere locally a product with
Euclidean space and still have rank one. 
It turns out that the rank of a manifold of nonpositive curvature is the
algebraic
rank of its fundamental group (\cite{BaEb}). 

Apart from manifolds of negative curvature, examples are nonpositively curved
surfaces containing flat cylinders or an infinitesimal analogue of a flat
cylinder, as illustrated in the following diagram.
\begin{figure}[h]
\begin{center}
\footnotesize
\includegraphics[width=0.5\textwidth]{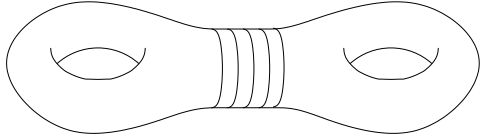}
\label{rksurf}\caption{A surface of rank one with a flat strip and a
parallel family
of geodesics.}
\end{center}
\end{figure}
\normalsize

 In higher dimensions,
examples
include M. Gromov's (3-dimensional) graph manifolds (\cite{gro}). There is an
interesting
rigidity phenomenon: Every compact 3-manifold of nonpositive curvature whose
fundamental group is isomorphic to that of a graph manifold is actually
diffeomorphic
to that graph manifold (\cite{Sch}).

We will study properties of manifolds of rank one in this article.

\subsection{Reasons to study these spaces }
\subsubsection{Rank rigidity}

W.~Ballmann (\cite{higher rk}) and independently K.~Burns and R.~Spatzier
(\cite{BuSp})
showed that the universal cover of a nonpositively curved manifold can be
written
uniquely as a product of Euclidean, symmetric and rank one spaces. The first
two types are understood, due to P.~Eberlein and others. (A general introduction
to higher rank symmetric spaces is e.g. \cite{Ebe5}; see also \cite{5}. For
a complete treatment of rank rigidity, see \cite{dmv}.) 

Thus, in order to understand nonpositively curved manifolds, the most relevant
objects to examine are manifolds of dimension at least two with rank
one. This becomes
even more obvious if one considers the fact that rank one is generic
in nonpositive
curvature (\cite{bbe}). Thus, in a certain sense, ``almost all'' nonpositively
curved manifolds have rank one.

\subsubsection{Limits of hyperbolic systems}

Another reason to study nonpositively curved manifolds is the following. On
one hand, strongly hyperbolic systems, particularly geodesic flows on compact
manifolds of negative curvature, are well understood since D.~V.~Anosov
(\cite{Ano}, \cite{Mar2}, \cite{Mar3}). 
Later, P.~Eberlein established a condition weaker than
negative
curvature which still ensures the Anosov property of the geodesic flow
(\cite{Ebe3 anosov}, \cite{Ebe4 anosov}).
Also, Ya.~Pesin and M.~Brin extended the notion of hyperbolicity to that of
partial hyperbolicity (\cite{Pes}, \cite{BrPe1}, \cite{BrPe2}).
On the other hand, much less is known about the dynamics of systems lacking
strong hyperbolicity. 
The open
set of geodesic flows on manifolds with negative curvature is ``essentially''
understood (hyperbolicity is an open property), and hence the edge
of our knowledge 
about such flows
is mainly marked by the boundary of this set, which is a set
of geodesic
flows on manifolds of nonpositive curvature. Therefore it is important to
study the dynamics of these.

However, the set of nonpositively curved manifolds is larger than just the
closure
of the set of negatively curved manifolds. This can be seen e.g.  as follows:
Some nonpositively
curved manifolds, such as Gromov's graph manifolds, contain an embedded
2-torus.
Thus their fundamental group contains a copy of \( \mathbb {Z}^{2} \). Hence,
by Preissmann's theorem, they do not admit \emph{any} metric of negative
curvature.
Therefore, the investigation in this article actually deals with even
more than
the limits of our current knowledge of strongly hyperbolic systems.

\subsection{Statement of the result }
We count homotopy classes of closed geodesics ordered by length in the
following sense:
The number $P_t$ of homotopy  classes of periodic orbits
of length at most \( t \) is finite for all \( t .\) (For a
periodic geodesic there may be
uncountably many periodic geodesics homotopic 
to it, but  in nonpositive curvature they all have the same length.)

Trying to find a concrete and explicit formula for $P_t$ which is accurate for
all values of $t$
is completely hopeless, even on very simple manifolds. Nonetheless, in this
article we manage to derive an asymptotic formula for $P_t$, i.e. a formula
which tells us the behavior of
$P_t$  when $t$ is large.
We will show (Theorem \ref{thm:P asym e^ht/ht}):
\[
P_{t}\sim \frac{1}{ht}e^{ht}\]
 where the notation \( f(t)\sim g(t) \) means \( \frac{f(t)}{g(t)}\rightarrow 1
\)
as \( t\rightarrow \infty . \)
This extends a celebrated result of G.~A.~Margulis to the case
of nonpositive curvature. It also strengthens results by G.~Knieper, which were
the
sharpest estimates known to this date in the setup of nonpositive curvature.
This is 
explained in more detail in the following section. 

\section{History}

\subsection{Margulis' asymptotics}

The study of the functions \( P_. \) and \( b_. \), where \( b_{t}(x) \) is the
volume of the geodesic ball of radius \( t \) centered at \( x \),  was
originated by G.~A.~Margulis in his dissertation 
\cite{Mar1}. He covers the case
where the curvature is strictly negative.
His influential results were published in \cite{Mar2} and the proofs
were published eventually in
\cite{Mar4}.
He established that, on a compact manifold of negative curvature, 
\begin{equation}
\label{marg asym for vol}
b_{t}(x)\sim c(x)e^{ht}
\end{equation}
 for some continuous function \( c \) on \( M. \) He also showed that 
\begin{equation}
\label{marg asym for P}
P_{t}\sim c'\frac{e^{ht}}{t}
\end{equation}
for some constant $c'$.
In modern notation, the exponent \( h \) is the topological entropy of the geodesic flow. 
See \cite{kh} for a modern reference on the topic of entropy. 

Margulis  pointed out that if the curvature is constant with value $K$
then the exponential growth rate equals \( (n-1)\sqrt{-K} \) and that in this
case the function \( c \) is constant. In fact, \( c\equiv
{1}/{h} \). Moreover, \( c'=1/h \) for variable negative curvature.

\subsection{Beyond negative curvature; Katok's entropy conjecture }

The vast majority of the studies that have since been done are restricted to
negative
curvature; see e.g. \cite{PaPo}, \cite{BaLe}, \cite{PoSh1}.
 The reason is that in that case techniques from uniformly hyperbolic
dynamics can be applied. From the point of view of
analysis, this case is
much easier to treat. 
However, from a geometrical viewpoint, manifolds of nonpositive
curvature are a natural object to study. Already in the seventies the
investigation
of manifolds of nonpositive curvature became the focus of interest of
geometers. (Also more
general classes have been studied since, such as manifolds without \emph{focal
points}, i.e.~where every parallel Jacobi field with one zero has the property
that its length increases monotonically when going away from the zero, or
manifolds
without \emph{conjugate points}, i.e. such that any Jacobi field with two
zeroes
is trivial.) In 1984 at a MSRI problem session 
a major list of problems which were open at the time was
compiled (\cite{BuKa}), including A.~Katok's \emph{entropy} \emph{conjecture:}
The measure
of maximal entropy is unique.

One of the first result in the direction of asymptotics of closed geodesics in
nonpositive curvature is 
the fact that
the growth rate of closed geodesics equals the topological entropy 
\( h \) , even if the curvature is just
nonpositive (instead of strictly negative). 
G.~Knieper calculated the growth rate of closed geodesics
in \cite{knie arch d math}.
This result can also be deduced 
from A.~Manning's result \cite{mann ent}
that the growth rate of volume equals \( h \) in nonpositive curvature.

This shows in particular that the exponent in Margulis'
asymptotics must equal \( h \) (we have already written
Margulis' equations that way). A method for showing that in the case of 
strictly negative curvature the constant \( c' \) in
equation (\ref{marg asym for P}) equals \( 1/h \) is outlined in C. Toll's
dissertation \cite{Tol} and published in \cite{kh}. This method was developed
by Margulis in his thesis \cite{Mar1} and published in \cite{Mar4}.
The behavior of the function \( c \) in the asymptotic
formula (\ref{marg asym for vol}) was investigated by C.~Yue in \cite{Y jdg}
and \cite{Y etds}. For recent developments concerning the 
asymptotics of the number of homology classes see e.g.
the works of N.~Anantharaman (\cite{Ana1}, \cite{Ana2}), M.~Babillot and
F.~Ledrappier (\cite{BaLe}), M.~Pollicott and R.~Sharp (\cite{PoSh2}), 
and S.~P.~Lalley (\cite{Lal}).

It took almost two decades after Knieper's and Manning's results, which in
turn were published about one decade after Margulis' results, until the next
step in the analysis of asymptotics of periodic orbits on manifolds of
nonpositive curvature was completed, again by
Knieper.

\subsection{Knieper's multiplicative bounds}\label{kniepersection}

In 1996 G.~Knieper proved  asymptotic multiplicative
bounds for 
volume
and periodic orbits 
(\cite{knie gafa}) which, in the case of nonpositive curvature
and rank one,  were the sharpest results known
until now: There exists a constant \( C \) such that for sufficiently large
\( t \), 
\[
\frac{1}{C}\leq \frac{s_{t}(x)}{e^{ht}}\leq C,\]
where $s_t(x)$ is the volume of the sphere of radius $t$ centered at $x$, and 
\[
\frac{1}{Ct}\leq \frac{P_{t}}{e^{ht}}\leq C.\]
The main step
in the proof of these asymptotics is the proof of Katok's entropy conjecture.
Knieper also demonstrated in \cite{knie ann} that the measure of maximal
entropy can be obtained
via the Patterson-Sullivan construction (\cite{Pat}, \cite{Sul}; see also 
\cite{Kai1}, \cite{Kai2}).
Moreover, for the case of higher rank  Knieper obtained asymptotic
information
using rigidity. Namely, \[
\frac{1}{C}\leq \frac{s_{t}}{t^{(\text{rank}(M)-1)/2}e^{ht}}\leq C.\]
He also estimates the number of closed geodesics in higher rank.

Knieper subsequently sharpened his results. With the same method he is  able 
to prove
that in the rank one case actually $$\frac{1}{C}\leq
\frac{P_{t}}{e^{ht}/t}\leq {C}$$ holds (see \cite{kni4}).
Still, the quotient of the upper and lower 
bounds is a constant which cannot be made close to 1.

The question whether in this setup of nonpositive curvature and rank
one
one can prove more
precise multiplicative asymptotics---namely without such multiplicative
constants---has remained open so far. In this article we establish this result.

\begin{remark}
For non-geodesic dynamical systems no statements providing asymptotics similar
the
ones mentioned here are known. 
 One
of the best known results is that for some prevalent set of 
diffeomorphisms the number of periodic orbits of period \( n \) is bounded
by \( \exp(C\cdot n^{1+\delta }) \) for some \( \delta >0 \) (\cite{KaHu}).

But even for geodesic flows in the absence of nonpositive
curvature it is difficult to count---or even find---closed geodesics.
The fact that every compact manifold has even \emph{one} closed geodesic
was established only in 1951 by Lyusternik and Fet (\cite{LuFe}).
In the setup of positively curved manifolds and their kin, one of the strongest
known results is H.-B.~Rademacher's Theorem from 1990 (\cite{Rad1}, 
\cite{Rad2})
stating that every 
connected and simply connected compact manifold has infinitely many
(geometrically distinct) closed
geodesics for a \( C^{r} \)-generic metric for all \(  r\in  
[2,\infty ]. \) See also \cite{Rad3} for this.

For Riemannian metrics on the 2-sphere, existence of many closed geod\-esics took
considerable effort to prove.  The famous Lyusternik-Shnirelman Theorem
asserts the existence of three (geometrically distinct) closed geod\-esics. The
original proof in
\cite{LuSch} is considered to have gaps. 
Complete proofs were given by W.~Ballmann (\cite{Bal3}), 
W.~Klingenberg (with W.~Ballmann's help) (\cite{Kli})
and also J.~Jost (\cite{JosSphere}, \cite{JosCorr}). 
See also \cite{BTZ1},  \cite{BTZ2}.

J.~Franks (\cite{Fra}) established that every metric of positive curvature on
$S^2$ has
infinitely many (geometrically distinct) geodesics. This is a consequence of
his results about area-preserving annulus homeomorphisms.  
V.~Bangert managed to show existence of infinitely many 
(geometrically distinct) geodesics on $S^2$
without requiring positive curvature by means of variational methods 
 (\cite{Ban}).

For the case of Finsler manifolds, there  actually exist examples of simply
connected manifolds that possess only finitely many geometrically distinct
closed geodesics. On
$S^2$ such examples were constructed by A.~B.~Katok in \cite{Kat1} as a
by-product of a more general construction.
Explaining this particular aspect of Katok's construction is also
the topic of \cite{Mat}. \cite{Zil} also studies the Katok examples.

\end{remark}

In this article we derive asymptotics like the ones Margulis obtained. 
We
prove them for nonpositive curvature and rank one 
using non-uniform hyperbolicity.
Hence the same strong statement is true in considerably greater generality.

\section{Geometry and dynamics in nonpositive curvature}
Let $M$ be a compact rank one Riemannian manifold of
nonpositive curvature.   As is usual, we
assume it to be  connected and geodesically complete.
Let \( S\tilde{M} \) be the unit sphere bundle of the universal covering
of $M$. 
For \( v\in  S\tilde{M}  \) let \( c_{v} \) be the geodesic satisfying \( c'(0)=v \)
(which is hence automatically parameterized by arclength). Here \( c' \) of
course denotes the covariant derivative  of \( c. \) 
Let \( g=(g^{t})_{t\in \mathbb {R}} \) 
be the \textbf{geodesic flow} on \(  S\tilde{M} ,
\)
which is defined by \( g^{t}(v):=c_{v}'(t)=:v_t. \) 
\subsection{Review of asymptotic geometry}
\begin{defn} Let \( \pi : \) \( TM\rightarrow M \) be the canonical projection. 
We say that \( v,w\in  S\tilde{M}  \) are \textbf{positively asymptotic} (written 
 \( v\sim w \)) if there exists a constant \( C \) such that \( 
d(\pi g^{t}v,\pi g^{t}w)<C \)
for all \( t>0 \). 
This is evidently an equivalence relation. 
Similarly,  \( v,w\in  S\tilde{M}  \) are \textbf{negatively asymptotic}
if
 $-v\sim-w$.
\end{defn}
Recall that \( \textrm{rank}(v):=\dim\{\textrm{parallel Jacobi fields along
}c_v\}. \)
Clearly the rank is constant along geodesics, i.e. 
\(
\textrm{rank}(c_{v}'(t))=\textrm{rank}(c_{v}'(0))\)
 for all \( t\in \mathbb{R} . \) 
\begin{defn}
We call a vector \( v\in  S\tilde{M}  \), as well as the geodesic \( c_{v}, \)
\textbf{regular}
if \( \textrm{rank}(v)=1 \) and \textbf{singular} if \( \textrm{rank}(v)>1. \)
\def\Reg{{\bf{Reg}}}
\def\Sing{{\bf{Sing}}}
Let \( \textrm{\Reg} \) and \( \textrm{\Sing} \) be the sets of regular and
singular
vectors, respectively. 
\end{defn}
\begin{remark}
The set \( \textrm{{\bf{Reg}}} \) is open
since rank is semicontinuous in the sense that 
\(
\textrm{rank}(\lim _{n}v_{n})\geq \lim _{n}\textrm{rank}(v_{n}).\)
\end{remark}
\begin{remark}
For every \( v\in  S\tilde{M}  \) and every \( p\in \tilde{M} \) 
there exists some \( w_{+}\in
S_{p}\tilde{M} \)
which is positively asymptotic to \( v \) and some \( w_{-}\in S_{p}\tilde{M}
\)
which is negatively asymptotic to \( v. \) In contrast, the existence of \(
w_{+-}\in T_{p}\tilde{M} \)
which is simultaneously positively \emph{and} negatively asymptotic to \( v \)
is rare. Moreover, if \( v\sim w \) and \(
-v\sim -w \) 
then \( v,w \) bound a flat strip, i.e. a totally geodesic embedded copy of
\( [-a,a]\times \mathbb {R} \) with Euclidean metric. Here the number \( a \)
 is positive if \( v,w \) do not lie on the same geodesic
 trajectory. In particular,
if \( \textrm{rank}(v)=1 \) (hence $c_v$ is a regular geodesic) then there
does not exist such $w$ with \( w\sim v \)
and $-v\sim -w$
through \emph{any} base point in the manifold outside \( c_{v}. \) In other
words, if \( w\sim v \) and $-w\sim -v$
 on a rank 1 manifold then \( w=g^{t}v \) for
some \( t. \) 
On the other hand, if  \( \textrm{rank}(v)>1 \) (and thus $c_v$ is a singular
geodesic) then $v$ and hence $c_v$ may lie in a flat strip of positive width,
and in that
case there are vectors $w$ with  \( w\sim
v \) and $-w\sim -v$ at base points outside $c_v,$
namely at all base points in that flat strip.
\end{remark}
Since \( \tilde{M} \) is of nonpositive curvature, it is diffeomorphic to \( \mathbb{R}
^{n} \)
by the Hadamard-Cartan theorem, hence to an open Euclidean $n$-ball. It admits
the
\textbf{compactification} 
\(
\overline{M}=\tilde{M}\cup \tilde{M}(\infty )\)
 where \( \tilde{M}(\infty ), \) the \textbf{boundary at infinity} of \(
\tilde{M}, \)
is the set of equivalence classes of positively asymptotic vectors, i.e., 
\(
\tilde{M}(\infty )= S\tilde{M} /\sim .\)

A detailed description of spaces of
nonpositive curvature, even without a manifold structure,
can be found in \cite{dmv}. 

\subsection{Stable and unstable spaces}
\begin{defn}
Let \( \mathcal{K}:TS\tilde{M}\to S\tilde{M} \)
be the \textbf{connection map}, i.e. 
\(
\mathcal{K}\xi :=\nabla _{d\pi \xi }Z\)
 where $\nabla$ is the Riemannian connection and 
\(
Z(0)=d\pi \xi \), \( \left.\frac{d}{dt}Z(t)\right|_{t=0}=\xi .\)
 We obtain a Riemannian metric on \( SM \), the \textbf{Sasaki
metric},
by setting \( \langle\xi ,\eta \rangle :=\langle d\pi \xi ,d\pi \eta \rangle
+\langle \mathcal{K}\xi ,\mathcal{K}\eta \rangle  \)
for $\xi,\eta\in T_vSM$ where $v\in SM$. Hence we can talk about length of
vectors in $TS\tilde{M}$.
\end{defn}
There is a canonical isomorphism 
\(
(d\pi ,\mathcal{K})\)
 between \( T_{v}SM \) and the set
of Jacobi fields
along \( c_{v}. \) It is given by \( \xi \mapsto J_\xi \) with 
$J_\xi(0)=d\pi \cdot \xi ,\ J'_\xi(0)=\mathcal{K}\xi .$
This uses the well-known fact that a Jacobi field is determined by
its value and derivative at one point.

The space \( TS\tilde{M} \), i.e. the tangent bundle of the unit
sphere
bundle, admits a natural splitting \[ TS\tilde{M}=E^{s}\oplus E^{u}\oplus
E^{0}, \] i.e. \( T_{v}S\tilde{M}=E_{v}^{s}\oplus E_{v}^{u}\oplus E_{v}^{0} \)
for all
$v\in S\tilde{M}$, where
\begin{eqnarray*}
E_{v}^{0}&:=&\mathbb {R}\cdot
\left.\frac{d}{dt}g^{t}v\right|_{t=0},\\
E^{s}_{v}&:=&\{\xi \in T_{v}S\tilde{M}:\xi \perp E^{0},\, J_{\xi
}\, 
\textrm{is the stable Jacobi field along }d\pi \xi \},\\
E^{u}_{v}&:=&\{\xi \in T_{v}S\tilde{M}:\xi \perp E^{0},\, J_{\xi
}\, 
\textrm{is the unstable Jac. field along }d\pi \xi \}.
\end{eqnarray*}
\begin{defn}
For \( v\in S\tilde{M}, \) define \( W^{s}(v), \) the \textbf{stable
horosphere} based
at \textbf{\( v, \)} to be the integral manifold of the distribution \( E^{s}
\)
passing through \( v. \) Similarly, define \( W^{u}(v), \) the \textbf{unstable
horosphere} based at \textbf{\( v \),} via integrating \( E^{u}. \) The
projection of $W^s$ (resp. $W^u$) to $\tilde{M}$ is again 
called the stable horosphere (resp. the unstable horosphere).
The flow
direction of course integrates to a
geodesic
trajectory, which one might call \(
W^{0}(v). \) The $0$- and $u$-directions are jointly integrable, giving rise
to an integral manifold $W^{0u}$, and similarly the $0$- and $s$-directions
give rise to an integral manifold $W^{0s}$. 
We write \( B^{i}_{\delta } \) (resp. \(\overline{ B^{i}_{\delta }} \)) for the open
(resp. closed) \( \delta  \)-neighborhood in \( W^{i} \)
(\( i=u,s,0u,0s,0 \)). 
\end{defn}
On
the other hand, the
$u$- and $s$-directions are usually not jointly integrable.
Continuity of these foliations has been proven in this form by P. Eberlein
(\cite{Ebe2 visibility})
and J.-H. Eschenburg (\cite{Esch}): 
\begin{thm}\label{thm:unif cts}
Let \( M \) be a compact manifold of nonpositive curvature. Then the foliation
\( \{W_{}^{s}(v):v\in S\tilde{M}\} \) of \( S\tilde{M} \) by stable horospheres
is 
continuous.
 The same holds for the foliation \( \{W_{}^{u}(v):v\in S\tilde{M}\} \)
of \( S\tilde{M} \) by unstable horospheres.
\end{thm}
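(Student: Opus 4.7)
The plan is to realize the stable and unstable horospheres as level sets (and negative gradient fields) of Busemann functions, and deduce continuity of the foliations from continuity of these Busemann functions in the basepoint at infinity. For each $\xi\in\tilde M(\infty)$ the Busemann function
\[
b_\xi(x)=\lim_{t\to\infty}\bigl(d(x,c(t))-t\bigr),
\]
where $c$ is any geodesic ray with $c(\infty)=\xi$, is well defined, $C^1$, convex, and satisfies $\|\nabla b_\xi\|\equiv 1$ on $\tilde M$. For $v\in S\tilde M$ with forward endpoint $\xi_+(v):=c_v(\infty)$, the projection of $W^s(v)$ to $\tilde M$ is the level set $\{x:b_{\xi_+(v)}(x)=b_{\xi_+(v)}(\pi v)\}$, and $W^s(v)$ itself is the graph of $-\nabla b_{\xi_+(v)}$ over this level set.

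Given this characterization, the proof reduces to two continuity statements. First, the endpoint map $\xi_+:S\tilde M\to\tilde M(\infty)$ is continuous in the cone topology on $\tilde M(\infty)$: if $v_n\to v$ then the geodesics $c_{v_n}$ converge to $c_v$ uniformly on compact intervals (convexity of the distance function in nonpositive curvature), so their endpoints converge. Second, the assignment $\xi\mapsto b_\xi$ is continuous in the $C^1$-topology on compact sets in $\tilde M$. The $C^0$-part is a direct triangle-inequality estimate from the definition; the gradient part is the delicate step.

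Combining these two facts, a local presentation of $W^s(v)$ as a graph over a transversal at $\pi v$ (using the exponential map of $\tilde M$ together with the unit normal field $-\nabla b_{\xi_+(v)}$) varies continuously in $v$. Uniformity of the continuity, needed to phrase it as continuity of the foliation on a compact set, follows from the compactness of $SM$, since the foliations on $S\tilde M$ are $\pi_1(M)$-equivariant lifts of structures downstairs. The unstable case is identical, applied to $-v$ in place of $v$ so that $\xi_+(-v)=c_v(-\infty)$ plays the role of the basepoint at infinity.

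The hard part is the $C^1$-convergence in the second continuity statement. In strictly negative curvature one can differentiate the defining limit and extract uniform second-derivative bounds, so $C^1$-convergence is automatic; in nonpositive curvature the Busemann functions are only $C^{1,1}$ at best, and naive differentiation fails. The way around this is to exploit convexity of $b_\xi$ together with the norm constraint $\|\nabla b_\xi\|\equiv 1$: a sequence of convex functions with uniformly bounded gradients that converges in $C^0$ automatically converges in $C^1$ on compact sets. Verifying the hypotheses in the present setting — in particular, that $\xi_n\to\xi$ really does force $b_{\xi_n}\to b_\xi$ uniformly on compacta, not merely pointwise — is where the visibility/asymptoticity machinery established by Eberlein and Eschenburg enters, and is the genuine technical core of the theorem.
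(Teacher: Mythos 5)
The paper does not contain a proof of this statement: Theorem~3.2 is stated as a known result, citing Eberlein and Eschenburg, so there is nothing internal to compare against. Your outline is a reasonable reconstruction of the classical argument (Busemann functions, the endpoint map, convexity of distance), so the question is only whether the reconstruction itself is sound.

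It is sound in its architecture, but you invert the order of difficulty and then quietly contradict yourself. Early on you assert that ``the $C^{0}$-part is a direct triangle-inequality estimate from the definition; the gradient part is the delicate step,'' yet your final paragraph correctly identifies the uniform $C^{0}$-convergence $b_{\xi_n}\to b_{\xi}$ as ``the genuine technical core.'' The second assessment is the right one. The triangle inequality gives you only one inequality essentially for free: since $t\mapsto d(x,c_n(t))-t$ is nonincreasing and $c_n\to c$ on compact parameter intervals, one obtains $\limsup_n b_{\xi_n}(x)\le b_{\xi}(x)$. The reverse inequality is where the nonpositive-curvature comparison/convexity-of-distance machinery actually enters and is not a one-line estimate. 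By contrast, once $C^{0}$-convergence is in hand, the upgrade to $C^{1}$-convergence is essentially soft: all the $b_{\xi_n}$ and $b_{\xi}$ are convex along geodesics, the limit $b_{\xi}$ is $C^{1}$ (indeed $C^{1,1}$, and $C^{2}$ when sectional curvature is bounded below, which holds here by compactness of $M$), and a locally uniformly convergent sequence of convex functions with differentiable limit has gradients converging to the gradient of the limit --- no separate ``uniform second-derivative bounds'' are needed, and you do not get to appeal to them in nonpositive curvature anyway. Two further small points: since every $b_{\xi}$ is $1$-Lipschitz and can be normalized to vanish at a fixed basepoint, pointwise convergence already forces uniform convergence on compacta by Arzel\`a--Ascoli, so your distinction between the two is vacuous; and ``visibility machinery'' is a misnomer in this context, since the theorem is asserted for all compact nonpositively curved manifolds, including flat tori, which are not visibility manifolds --- the relevant tool is the general convexity/comparison theory on Hadamard manifolds, not visibility.
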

Note that due to compactness of $M$ (hence of $SM$), the continuity is
automatically uniform.

During the same years, Eberlein
considered similar questions on \emph{Visibility manifolds}  
(\cite{Ebe2 visibility}).
 The continuity 
result was improved by M. Brin (\cite[Appendix A]{BaPe}) to H\"olderness on the
Pesin sets; see \cite{BaPe} for the definition of these sets.
For our discussion, uniform continuity is sufficient.

The following result is easier to show in the hyperbolic case (i.e. strictly
negative
curvature) than for nonpositive curvature, where it  is a major
theorem,  proven
by Eberlein (\cite{Ebe isom}):

\begin{thm}
Let \( M \) be a compact rank one manifold of nonpositive curvature. Then
stable
manifolds are dense. Similarly, unstable manifolds are dense.
\end{thm}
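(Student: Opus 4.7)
The plan is to pass to the universal cover $\tilde{M}$ and exploit the fact that each stable horosphere in $S\tilde{M}$ is determined by a single point of $\tilde{M}(\infty)$: two vectors lie in the same stable leaf iff they are positively asymptotic, i.e., share a forward endpoint at infinity. Hence density of the projection of $W^s(v)$ to $SM$ is equivalent to density of the $\Gamma$-orbit $\Gamma \cdot \xi$ in $\tilde{M}(\infty)$, where $\xi := c_v(+\infty)$ and $\Gamma = \pi_1(M)$ acts on $\tilde{M}$ by deck transformations. Uniform continuity of the horosphere foliation (Theorem~\ref{thm:unif cts}) together with cocompactness of $\Gamma$ on $\tilde{M}$ justify this reduction.

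First I would produce a rank-one closed geodesic using Ballmann's rank rigidity, valid on any compact rank-one nonpositively curved manifold. Let $v_0$ denote its tangent vector and $\gamma_0 \in \Gamma$ the deck transformation translating along the axis $c_{v_0}$, so that $\gamma_0$ fixes the boundary points $\xi_0^{\pm} := c_{\pm v_0}(+\infty)$. The key claim is that $\gamma_0$ acts on $\tilde{M}(\infty)$ with north--south (hyperbolic) dynamics: $\gamma_0^n \xi \to \xi_0^+$ for every $\xi \neq \xi_0^-$, and dually for $\gamma_0^{-n}$. Here the rank-one hypothesis is essential: the axis of $\gamma_0$ is a regular geodesic, so no parallel Jacobi field is perpendicular to it, forcing asymptotic contraction at $\xi_0^+$.

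Next I would promote the hyperbolic behavior of a single $\gamma_0$ to density of $\Gamma \cdot \xi$ in all of $\tilde{M}(\infty)$. Given an open $U \subset \tilde{M}(\infty)$, pick a rank-one periodic vector $v_1$ whose attracting endpoint $\xi_1^+$ lies in $U$; this is possible because rank-one periodic vectors are dense in the regular set (again by Ballmann), hence their attracting endpoints are dense in $\tilde{M}(\infty)$. Use cocompactness of $\Gamma$ to translate $\xi$ by some $\gamma \in \Gamma$ so that $\gamma \xi$ avoids the repelling fixed point $\xi_1^-$ of the axial isometry $\gamma_1$ associated with $v_1$, and then iterate $\gamma_1$ to push $\gamma \xi$ into $U$. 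The same argument with $-v$ in place of $v$ yields density of unstable manifolds.

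The hardest part is the north--south dynamics claimed in the second paragraph. In strictly negative curvature this is immediate from the hyperbolicity of the Jacobi equation along $c_{v_0}$; in nonpositive curvature one must exclude the possibility that some boundary point stays away from $\xi_0^+$ under iteration due to a flat half-plane or parallel-Jacobi-field obstruction, and this is precisely what the rank-one condition on $c_{v_0}$ rules out. Once the north--south dynamics is in place, the ping-pong argument of the third paragraph is standard, and density of stable manifolds follows from the boundary density by uniform continuity of the horosphere foliation combined with the local transversality of stable horospheres to the flow direction.
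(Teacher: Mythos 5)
The paper does not prove this statement; it cites it as Eberlein's theorem (\cite{Ebe isom}) and moves on, so there is no in-paper proof to compare your sketch against. Your blind proposal follows the modern Ballmann-style route (north--south dynamics of rank-one axial isometries producing minimality of the $\Gamma$-action on $\tilde M(\infty)$, then ping-pong), which is indeed a standard way to obtain this; see \cite{dmv}. That said, the sketch has a genuine gap at its very first step.

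The reduction ``density of $W^s(v)$ in $SM$ is equivalent to density of $\Gamma\xi$ in $\tilde M(\infty)$'' is not correct as stated. Positive asymptoticity characterizes the \emph{weak} stable leaf $W^{0s}(v)$, not the strong stable horosphere $W^s(v)$: two vectors with the same forward endpoint can sit on different horospheres centered at that endpoint. Density of $\Gamma\xi$ in $\tilde M(\infty)$ therefore gives density of the projection of $W^{0s}(v)$, but to conclude that $W^s(v)$ itself is dense you must additionally control the Busemann parameter: given $u$, you need $\gamma$ with $\gamma\xi$ close to $u_\infty$ \emph{and} $b(\gamma(\pi v), \pi u, \gamma\xi)$ close to $0$ simultaneously. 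This is a two-parameter approximation and is a nontrivial extra step, not a consequence of ``uniform continuity of the horosphere foliation'' or of ``local transversality of stable horospheres to the flow direction,'' both of which you invoke without supplying the argument. A standard way to close this gap is to combine boundary minimality with topological transitivity of the geodesic flow (or with a closing/shadowing step that captures the flow direction), but as written the proposal simply asserts the equivalence.

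A smaller point: ``Ballmann's rank rigidity'' is not what produces rank-one closed geodesics on a compact rank-one manifold; the existence and density of rank-one periodic vectors and the north--south dynamics of the corresponding axial isometries come from Ballmann's work on the structure of rank-one manifolds (see also Ballmann--Brin--Eberlein), not from the rank-rigidity splitting theorem. Beyond that misattribution, these inputs are substantial theorems in their own right, so what you have is an outline that reduces the statement to other deep results --- legitimate, but the strong-stable/weak-stable issue above is a real hole that must be filled before the argument stands.
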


\subsection{Important measures}

The Riemannian structure gives rise to a natural measure \( \lambda  \) on
\( SM, \) called the \textbf{Liouville measure}. It is finite since \( M \) is
compact. It is the prototypical smooth measure, i.e., for any smooth chart 
\( \varphi :\: U\rightarrow \mathbb{R} ^{2n-1}, \) $U\subset SM$ open,
the measure \( \varphi _{*}\lambda  \) on a subset of \( \mathbb{R} ^{2n-1} \) is
smoothly
equivalent to Lebesgue measure. 

The well-known variational principle (see e.g. \cite{kh}) asserts that the
supremum of the entropies of invariant probability measures on \( SM \) is
the topological entropy \( h \). The variational principle by itself of course
guarantees neither existence nor uniqueness of a \textbf{measure of maximal
entropy}, i.e. one whose entropy actually
equals \( h. \) These two facts were established in the setup of nonpositive
curvature by Knieper (\cite{knie ann}): 
\begin{thm}
There is a measure of maximal entropy for the geodes\-ic flow on a compact rank
one nonpositively
curved manifold. Moreover, it is unique.
\end{thm}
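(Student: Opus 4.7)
My plan is to prove existence and uniqueness separately, following the natural strategy of realizing the measure of maximal entropy via a Patterson--Sullivan construction on the boundary at infinity $\tilde{M}(\infty)$, and then showing that any candidate measure must coincide with this one.

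For existence, the quickest route is via the variational principle combined with upper semicontinuity of the entropy map $\mu \mapsto h_\mu(g)$ on the weak-$*$ compact space of $g$-invariant probability measures; the supremum $h$ is then attained. Upper semicontinuity in this setting follows from a weak form of expansiveness of $g$ on the regular set together with a bound on the contribution of the singular set. A more constructive alternative, which I would prefer because it provides the tool needed for uniqueness, is to build the measure directly. For each $p \in \tilde{M}$ let $\mu_p$ be a Patterson--Sullivan measure of dimension $h$ on $\tilde{M}(\infty)$, obtained as a weak-$*$ limit of normalized sums of Dirac masses over orbits of $\pi_1(M)\cdot p$ weighted by $e^{-s\,d(p,\gamma p)}$ as $s\searrow h$. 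A regular geodesic in $\tilde{M}$ is uniquely specified by an ordered pair of distinct points in $\tilde{M}(\infty)$ together with a time parameter, so $\mu_p \otimes \mu_p \otimes dt$ yields, on the regular set of $S\tilde{M}$, a candidate measure $\tilde{m}$ that is locally a product. Conformality of $\mu_p$ under change of base point translates into $g$-invariance of $\tilde{m}$, and equivariance under $\pi_1(M)$ lets $\tilde{m}$ descend to a finite invariant measure $m$ on $SM$. To identify $h_m(g) = h$, I would estimate the $m$-measure of Bowen $(t,\varepsilon)$-balls using the standard relation between the Patterson--Sullivan measure of shadows and $e^{-ht}$, giving the Brin--Katok formula $h_m(g)=h$.

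For uniqueness, the decisive step is that \emph{every} measure of maximal entropy assigns zero mass to the singular set $\mathbf{Sing}$. The reason is that $\mathbf{Sing}$ is a closed invariant set consisting of vectors lying in flat strips, and one can show the topological entropy of $g|_{\mathbf{Sing}}$ is strictly less than $h$; the variational principle applied to $\mathbf{Sing}$ then forbids a maximizing measure from charging it. Once $m'(\mathbf{Reg})=1$ for any MME $m'$, Pesin theory is available: on $\mathbf{Reg}$ the flow is non-uniformly hyperbolic, with stable and unstable manifolds tangent almost everywhere to $E^s$ and $E^u$, and the $\mu_p$-based product structure from the Patterson--Sullivan construction defines a local product chart in which $m$ is the normalized product. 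A Hopf-type / Ledrappier--Young style argument shows that the conditional measures of any MME on $W^u$-leaves must transform conformally with exponent $h$ under holonomy, hence agree up to normalization with the $\mu_p$; the analogous statement on $W^s$-leaves and invariance under the flow then force $m' = m$.

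The main obstacle I expect is exactly the combination that makes the rank-one setting delicate: one must simultaneously control the singular set (which obstructs any direct application of classical uniformly hyperbolic tools like Bowen specification) and exploit non-uniform hyperbolicity on its complement, where Pesin blocks are noncompact and stable/unstable foliations are only uniformly continuous (Theorem \ref{thm:unif cts}) rather than smooth. The crucial inputs making the argument go through are the density of stable and unstable manifolds together with the rank-rigidity dichotomy in the remark on asymptotic vectors, which ensures that outside a measure-zero set one is in the genuinely hyperbolic regime where the Patterson--Sullivan product structure pins down the invariant measure uniquely.
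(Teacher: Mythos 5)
The paper does not supply its own proof of this statement: it is Knieper's theorem, cited as \cite{knie ann}, with the only guidance being a one-line description of the strategy (Patterson--Sullivan construction; the measure also arises as a limit of measures on periodic orbits). Your existence sketch --- construct $\mu_p$ by the Patterson method, form the $g$-invariant measure on $\mathbf{Reg}$ from $\mu_p\otimes\mu_p\otimes dt$, and use shadow estimates to compute its entropy --- is indeed Knieper's route, and your opening move for uniqueness (any MME gives zero mass to $\mathbf{Sing}$ because $h(g|_{\mathbf{Sing}})<h$) is correct and is recorded in the paper's remark following the theorem.

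The gap is in the second half of your uniqueness argument. You assert that ``a Hopf-type / Ledrappier--Young style argument shows that the conditional measures of any MME on $W^u$-leaves must transform conformally with exponent $h$ under holonomy, hence agree with the $\mu_p$.'' This is not something Ledrappier--Young theory hands you: that theory relates entropy to Lyapunov exponents and transverse dimensions, and a Hopf argument gives ergodicity of a \emph{given} measure; neither forces the unstable conditionals of an \emph{arbitrary} maximizing measure in a nonuniformly hyperbolic setting to be the Busemann density. Establishing precisely that conformality is essentially the whole content of uniqueness and cannot be handed off to cited machinery --- it is what has to be proven. Knieper's actual argument does not go through Pesin theory at all; it compares an arbitrary MME against the Patterson--Sullivan product measure by a direct covering/counting argument, using the shadow lemma (the $\mu_p$-mass of the shadow of a ball at distance $t$ is comparable to $e^{-ht}$), bounds on $(n,\varepsilon)$-spanning and separated sets, and the rank-one flat-strip dichotomy to control overlaps of the product rectangles. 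Your strategy might be completable with substantial additional work, but as written the crucial conformality-of-conditionals step is asserted rather than proven.
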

The proof uses the Patterson-Sullivan
construction (\cite{Pat}, \cite{Sul}; see also \cite{Kai1}, \cite{Kai2}). 
Knieper's construction builds the 
measure as limit of measures supported on periodic orbits. 

For the case of strictly negative curvature, the measure of maximal
entropy was previously 
constructed (in a different way) by Margulis (\cite{Mar3}).
He used it to obtain his asymptotic results.
His construction builds the measure as the product of limits of measures
supported on pieces
of stable and unstable leaves. The measure thus obtained is hence called the 
\textbf{Margulis measure}. It agrees with the \textbf{Bowen measure} which is
obtained as limit of measures concentrated on periodic orbits. U.~Hamenst{\"a}dt
(\cite{Ham}) gave a geometric description of the Margulis measure by projecting
distances
on horospheres to the boundary at infinity, and this description was
immediately
generalized to Anosov flows by B.~Hasselblatt (\cite{Has}). 

The measure of
maximal entropy is adapted to the dynamical properties of the flow. In
particular, we will see that the conditionals of this
 measure show uniform expansion/contraction with time. In negative
curvature, this can be seen by considering the Margulis measure, where this
 property is a natural by-product of the construction. In nonpositive
 curvature, however, this property is not  immediate. 
 We show it in Theorem \ref{thm:unif-exp-of-cond}.
 
The measure of maximal entropy is sometimes simply called \textbf{maximal
measure}.
In the setup of nonpositive curvature, the name \textbf{Knieper
measure} could be appropriate.

\begin{remark}
It is part of Katok's entropy conjecture and shown in \cite{knie ann} that
\( m(\textrm{{\bf{Sing}}})=0 \) (and in fact even that \(
h(g|_{\textrm{{\bf{Sing}}}})<h(g) \)).
In contrast, whether \( \lambda (\textrm{{\bf{Sing}}})=0 \) or not is a major
open question;
it is equivalent to the famous problem of ergodicity of the geodesic flow
in nonpositive curvature with respect to the Liouville measure $\lambda$. 
On the other hand,
ergodicity of the geodesic flow in nonpositive curvature with respect to \( m\) 
has been proven by Knieper. 
\end{remark}
A very useful dynamical property is mixing, which implies ergodicity. For
nonpositive
curvature mixing has
been proven by M. Babillot (\cite{Bab}): 
\begin{thm}
The measure of maximal entropy for the geodesic flow on a compact rank one
nonpositively
curved manifold is mixing.
\end{thm}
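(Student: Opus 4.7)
The plan is to follow Babillot's dichotomy strategy: combine Knieper's ergodicity of $m$ with non-arithmeticity of the length spectrum of regular closed geodesics, using the local product structure of $m$ supplied by the Patterson-Sullivan construction. The set-up is to suppose for contradiction that $m$ is not mixing. Since $m$ is ergodic, the unitary group $U^t\varphi := \varphi\circ g^t$ on $L^2(SM,m)$ then has a non-trivial eigenvalue: there exist $\alpha\neq 0$ and a measurable $\varphi$ with $|\varphi|\equiv 1$ satisfying $\varphi\circ g^t = e^{2\pi i\alpha t}\varphi$ for $m$-a.e.\ vector. Everything that follows aims to produce a contradiction from the existence of such a $\varphi$.

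The first technical step is to exhibit a local product structure for $m$ on the open set $\textbf{Reg}$, which has full $m$-measure. Using Knieper's Patterson-Sullivan description of $m$, I would show that in a small flow-box around any regular $v$, the measure $m$ disintegrates as a product of conditionals on $W^u$- and $W^s$-leaves with Lebesgue measure in the flow direction, up to a Radon-Nikodym factor controlled by a H\"older cocycle. From the identity $\varphi\circ g^t = e^{2\pi i\alpha t}\varphi$ and the fact that $g^t$ contracts $W^s$-leaves into themselves, a standard Lebesgue density / martingale argument on these conditionals then forces $\varphi$ to be $m$-a.e.\ constant along each strong stable leaf, and symmetrically along each strong unstable leaf. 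Consequently, inside a generic product box $\varphi$ depends only on the flow coordinate.

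The second step is to extract an arithmetic constraint. Let $\gamma$ be a regular closed geodesic of period $T$, whose lift to $S\tilde M$ has transverse hyperbolicity by Theorem \ref{thm:unif cts} and rank one. The Poincar\'e return map of $g^t$ to a small transversal at $\gamma$ preserves $\varphi$ up to multiplication by $e^{2\pi i\alpha T}$; combined with the constancy of $\varphi$ along the transverse stable and unstable directions inside the product box, this forces $e^{2\pi i\alpha T}=1$. Hence the length of every regular closed geodesic lies in $\alpha^{-1}\mathbb{Z}$, i.e.\ the regular length spectrum is arithmetic. To dispose of this, I would use the density of stable and unstable manifolds together with Anosov-type closing in $\textbf{Reg}$: given any two periodic regular orbits with hyperbolic return maps, one can shadow heteroclinic chains between them to produce a family of closed regular geodesics whose lengths vary continuously. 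This yields two regular closed geodesics with $\mathbb Q$-linearly independent lengths, contradicting arithmeticity. Thus no non-trivial $\varphi$ exists and $m$ is mixing.

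The principal obstacle is the absence of uniform hyperbolicity on all of $S\tilde M$: on $\textbf{Sing}$ the foliations $W^s,W^u$ may collapse into flat strips, so classical Anosov tools (Bowen's spec\-i\-fi\-ca\-tion, Ratner's central limit machinery) are unavailable globally. Both the product disintegration of $m$ and the shadowing / closing argument must therefore be executed entirely inside the invariant open set $\textbf{Reg}$, using the fact that $m(\textbf{Sing})=0$ and $h(g|_{\textbf{Sing}})<h(g)$ to control the excursions into $\textbf{Sing}$. Making precise the required product structure of $m$ on $\textbf{Reg}$, and in particular verifying that the conditionals exhibit uniform expansion/contraction under $g^t$ despite the lack of uniform hyperbolicity, is the heart of the argument, and is exactly the kind of statement that Theorem \ref{thm:unif-exp-of-cond} (promised earlier in the paper) is designed to provide.
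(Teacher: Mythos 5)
This statement is not proved in the paper at all: the author simply quotes it as a theorem of M.~Babillot (the citation \cite{Bab}, Israel J.\ Math.\ 129 (2002)) and uses it as a black box in the counting argument that follows. So there is no ``paper's own proof'' to compare against. Your sketch is, however, broadly in the spirit of Babillot's dichotomy argument (non-mixing $\Rightarrow$ eigenfunction $\Rightarrow$ eigenfunction constant along strong stable/unstable leaves via local product structure $\Rightarrow$ arithmeticity of lengths/cross-ratios $\Rightarrow$ contradiction with non-arithmeticity), so at the level of strategy you have reconstructed the right object. That said, two of your steps hide real difficulties that are exactly where the published proof has to work hardest.

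First, to conclude that an eigenfunction $\varphi$ is constant along strong stable leaves you invoke ``$g^t$ contracts $W^s$-leaves'' and then a density/martingale argument. The theorem you point to (Theorem \ref{thm:unif-exp-of-cond}) gives uniform \emph{measure} contraction $dm^s_v(w_t)=e^{-ht}dm^s_v(w)$, which is a statement about the conditionals, not about distances on leaves. In nonpositive curvature distances along stable leaves do \emph{not} contract uniformly, and as the paper itself emphasizes in the remark after Theorem \ref{thm:stable-boundary-errors-are-small} they ``do not necessarily even converge to zero.'' The Hopf-type argument therefore has to be run in the Pesin/nonuniform sense, restricted to the set of Lyapunov-regular vectors where distances do eventually contract, and this is a genuine extra layer that your sketch skips. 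Second, your non-arithmeticity step (``shadow heteroclinic chains to produce a family of closed regular geodesics whose lengths vary continuously'') is vague in a setting where Anosov closing and specification are not globally available; lengths of periodic orbits form a discrete set, so ``varying continuously'' has to be replaced by an actual construction of two periodic orbits with incommensurable lengths, and in rank one this requires the Eberlein density of stable/unstable manifolds plus a closing lemma confined to the regular set. These gaps do not make the outline wrong, but they are the technical substance of Babillot's proof, and a referee would not accept the sketch as a self-contained argument.
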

We use this property in our proof of the asymptotic formula.
\subsection{\label{par jac flds}Parallel Jacobi fields}
\begin{lemma}
\label{lem:reg iff transversal}The vector \( v\in SM \) is regular if and only
if \( W^{u}({v}) \), \( W^{s}({v}) \) and $W^0(v)$ intersect 
transversally at \(
v. \)
\end{lemma}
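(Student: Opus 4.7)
The plan is to translate the geometric transversality condition into a linear-algebra statement inside $T_v S\tilde M$ and then restate it in terms of Jacobi fields along $c_v$. Since $W^i$ is by construction the integral manifold of $E^i$ at $v$ (for $i=s,u,0$), one has $T_v W^i = E^i_v$, so the three manifolds intersect transversally at $v$ precisely when $T_v S\tilde M = E^s_v \oplus E^u_v \oplus E^0_v$ as an internal direct sum.

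First I would observe that the condition $\xi \perp E^0_v$ built into the definitions of $E^s_v$ and $E^u_v$ makes $E^s_v \cap E^0_v = E^u_v \cap E^0_v = \{0\}$ automatic, and that the isomorphism $(d\pi,\mathcal K)$ identifies each of $E^s_v$ and $E^u_v$ with the $(n-1)$-dimensional space $v^\perp \subset T_{\pi v}\tilde M$ via the assignment sending a stable (resp.\ unstable) Jacobi field to its initial value $J(0)$. The bijectivity combines existence of (un)stable Jacobi fields with prescribed initial value (by the standard Dirichlet-limit construction) with uniqueness, both resting on the convexity of $\|J\|^2$ afforded by nonpositive curvature. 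Consequently $\dim E^s_v = \dim E^u_v = n-1$, so $(n-1)+(n-1)+1 = 2n-1 = \dim T_v S\tilde M$ already matches, and transversality reduces to the single algebraic condition $E^s_v \cap E^u_v = \{0\}$.

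The heart of the matter is to compute this intersection. A vector $\xi$ lies in $E^s_v \cap E^u_v$ iff $J_\xi$ is simultaneously the stable and the unstable Jacobi field with initial value $d\pi\xi$, equivalently iff $J_\xi$ is a perpendicular Jacobi field with $\|J_\xi\|$ bounded on all of $\mathbb R$. In nonpositive curvature such a Jacobi field is forced to be parallel: the identity $(\|J\|^2)'' = 2\|J'\|^2 - 2\langle R(J,c')c',J\rangle \geq 0$ makes $\|J\|^2$ convex, boundedness then forces it constant, and the resulting equality of a nonnegative with a nonpositive term forces $J' \equiv 0$. Hence $E^s_v \cap E^u_v$ corresponds bijectively to the space of \emph{perpendicular} parallel Jacobi fields along $c_v$, which has dimension $\textrm{rank}(v)-1$. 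We conclude $E^s_v \cap E^u_v = \{0\}$ iff $\textrm{rank}(v)=1$ iff $v$ is regular, giving both implications of the lemma.

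The only substantive ingredient beyond formal linear algebra is the identification of bounded perpendicular Jacobi fields with parallel ones; the remainder is bookkeeping on the direct-sum decomposition. This analytic step, classical though it is, is the sole place where nonpositive curvature is used essentially, and I would regard it as the main (if not especially difficult) obstacle.
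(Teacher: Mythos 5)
Your proof is correct and follows essentially the same route as the paper's: reduce transversality to the condition $E^s_v\cap E^u_v=\{0\}$, identify a vector in this intersection with a Jacobi field that is simultaneously stable and unstable, observe that such a field must be parallel and perpendicular, and invoke the definition of rank one. The paper's version is terser, asserting without proof that a common stable/unstable Jacobi field is parallel, whereas you supply both that convexity argument and the dimension bookkeeping (including the remark that $E^s,E^u\perp E^0$, which is what makes the pairwise-trivial-intersection reasoning valid for the three-fold sum), so your write-up is the same argument with the standard supporting lemmas filled in.
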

Here transversality of the three manifolds
means that 
\[
T_{v}SM=T_{v}W^{u}\oplus T_{v}W^{0}\oplus T_{v}W^{s}.\]
\begin{proof}
\( W^{u}({v}) \) and \( W^{s}({v}) \) intersect with zero angle
at \( v \) if and only if there exist 
\[
\xi \in TW_{}^{u}(v)\cap TW_{}^{s}(v)\subset T_{v}SM.\]
 But \( \xi \in TW_{}^{s}(v) \) is true if and only if \( J_{\xi } \) is the
stable
Jacobi field along \( c_{v} \), and \( \xi \in TW_{}^{u}(v) \) is true if and
only
if \( J_{\xi } \) is the unstable Jacobi field along \( c_{v} \). A Jacobi
field \( J \) is both the stable and the unstable Jacobi field along \( c_{v}
\)
if and only if \( J \) is parallel. The nonexistence of such \( J \)
perpendicular to \( c_{v} \) is just
the definition of rank one.
\end{proof}

\subsection{Coordinate boxes}

\begin{defn}
We call an open set \( U\subset SM \) of diameter at most \( \delta  \)
\textbf{regularly
coordinated} if for all \( v,w\in U \) there are unique \( x,y \in U \) 
such that 

\[
x\in B^{u}_{\delta }(v),\: y\in B^{0}_{\delta }(x),\; w\in B^{s}_{\delta }(y).\]
 In other words, \( v \) can be joined to \( w \) by means of a unique
 short three-segment
path whose first segment is contained in \( W_{}^{u}(v), \) whose second
segment
is a piece of a flow line and whose third segment is contained in \(
W_{}^{s}(w). \) 
\end{defn}
\begin{prop}\label{prop:reg coord}
If \( v \) is regular then it has a regularly coordinated neighborhood.
\end{prop}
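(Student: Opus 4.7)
The plan is to build a local product structure at $v$ by intersecting the three foliations $W^u, W^0, W^s$, using the transversality supplied by Lemma \ref{lem:reg iff transversal} together with the continuity of the stable and unstable foliations from Theorem \ref{thm:unif cts}. This mirrors the standard construction of Bowen boxes in hyperbolic dynamics; the one subtlety is that $W^u$ and $W^s$ have only continuous transverse structure in nonpositive curvature, so the inverse function theorem is unavailable and one must argue topologically.

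First I would apply Lemma \ref{lem:reg iff transversal} to obtain
\[
T_v S\tilde{M} = E^u_v \oplus E^0_v \oplus E^s_v.
\]
Since $\Reg$ is open (upper semicontinuity of rank) and both $E^u$ and $E^s$ vary continuously on $S\tilde{M}$ by Theorem \ref{thm:unif cts}, this direct-sum decomposition persists, with uniformly bounded angles, on a small ball around $v$. In particular, the foliation $W^u$ is uniformly transverse to the smoothly integrable distribution $E^0 \oplus E^s$, whose integral leaves are the $W^{0s}$.

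The construction of $U$ would then proceed in two stages. In the first stage, I would show that for all $v',q$ near $v$ and sufficiently small $\delta$, the intersection $B^u_\delta(v') \cap B^{0s}_\delta(q)$ consists of a single point depending continuously on $(v',q)$: existence follows from uniform transversality via invariance of domain (each individual leaf is smooth and the leaves vary continuously with the base point), while uniqueness follows from transversality once $\delta$ is small. In the second stage, inside the smooth manifold $W^{0s}(x)$ the smooth foliations $W^0$ and $W^s$ are transverse and give a smooth product chart $B^0_\delta \times B^s_\delta \to B^{0s}_\delta(x)$. Composing the two stages yields a continuous map
\[
\Phi: B^u_\delta(v) \times B^0_\delta(v) \times B^s_\delta(v) \longrightarrow S\tilde{M},
\]
where $\Phi(a,b,c)$ is the point reached by sliding from $v$ along $W^u$ to $a$, flowing to $g^b(a)$, and then sliding along $W^s$ by $c$. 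After shrinking $\delta$, $\Phi$ is a homeomorphism onto an open neighborhood $U$ of $v$ of diameter at most $\delta$, and unwinding the definition of $\Phi$ shows that $U$ is regularly coordinated.

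The main obstacle I expect is the first stage: producing the single continuous local intersection point of $W^u$ and $W^{0s}$ using only continuity of the foliations. All the necessary ingredients are in place (transversality at $v$, its uniform persistence on $\Reg$ near $v$, continuity of leaves from Theorem \ref{thm:unif cts}, and smoothness of each individual leaf), but they must be assembled by a topological argument rather than by the inverse function theorem.
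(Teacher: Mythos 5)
Your approach is genuinely different from the paper's. You aim for a purely topological argument: extend the transversal splitting $T_v S\tilde{M} = E^u_v \oplus E^0_v \oplus E^s_v$ to a neighborhood using continuity of the distributions, place $E^u$ and $E^{0s}$ in disjoint cone fields, and then obtain existence and uniqueness of the local intersection of $W^u$- and $W^{0s}$-leaves via Lipschitz-graph/contraction and invariance-of-domain arguments. The paper, by contrast, builds $U$ directly as $B^s_\delta(g^{(-\delta,\delta)}B^u_\delta(v))$, so that existence of the three-segment path from $v$ to any $w\in U$ is immediate from the definition of $U$, and it proves \emph{uniqueness} with a geometric argument rather than a metric one: if $(x,y)$ and $(x',y')$ were two such pairs, then $x$ and $x'$ would be simultaneously positively and negatively asymptotic, hence would bound a flat strip, which is excluded because the rank-one set is open so a whole neighborhood $V\supset U$ of $v$ contains no flat strip of positive width. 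That flat-strip argument is the crux the paper uses to sidestep exactly what you identify as ``the main obstacle'' in your first stage: it delivers uniqueness without any quantitative control on the angles of the merely continuous foliations. Your route can be made to work (it is essentially the standard cone-field construction of local product structure from uniformly hyperbolic dynamics, adapted to a $C^0$ setting), and it has the virtue of being more generic; but it requires carrying out the Lipschitz-graph/contraction estimate carefully and it does not by itself explain the role of rank one beyond the initial transversality, whereas the paper's proof uses the flat-strip theorem, a geometric tool specific to nonpositive curvature, to get uniqueness essentially for free. One small caveat in your writeup: $E^0\oplus E^s$ is not a \emph{smoothly} integrable distribution here (the $E^s$ field is only continuous); what is true, and what you need, is that each individual $W^{0s}$-leaf is a smooth submanifold and the leaves vary continuously.
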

\begin{proof}
Some \( 4\delta  \)-neighborhood \( V \) of \( v \) is of rank one. Let 
\[
U=B^{s}_{\delta }(g^{(-\delta ,\delta )}B^{u}_{\delta }(v)).\]
 This is contained in \( V \) and hence of rank one. It is open since \( W^{0},
\)
\( W^{u} \) and \( W^{s} \) are transversal by
Lemma \ref{lem:reg iff transversal}. 

By construction, for any \( w\in V, \) there exists a pair \( (x,y) \) such
that 
\[
B_{\delta }^{u}(v)\ni x\in B_{\delta }^{0}(y),\, y\in B^{s}_{\delta }(w).\]
 Assume there is another pair \( (x',y') \) with this property. From 
\[
B_{\delta }^{u}(x)\ni v\in B_{\delta }^{u}(x')\]
 we deduce \( x\in B_{2\delta }^{u}(x') \), and from 
\[
B_{\delta }^{0}(x)\ni y\in B_{\delta }^{s}(w),\, 
w\in B_{\delta }^{s}(y'),\, y'\in B_{\delta
}^{0}(x')\]
 we deduce \( x\in B_{4\delta }^{0s}(x'). \) Hence \( x \) and \( x' \) are
 simultaneously positively and negatively asymptotic; therefore, they bound a
flat
strip. Since \( V \) is of rank one, there is no such strip of
nonzero width
in \( U \subset V\). Hence \( x \) and \( x' \) lie on the same geodesic. Since
\( x\in W^{u}(x') \),
these two points are identical.

The same argument with $u$ and $s$ exchanged shows that \( y=y'. \)
Hence the pair \( (x,y) \) is unique. 
\end{proof}
\subsection{The Busemann function and conformal densities}
\begin{defn}
Let \( b(.,q,\xi ) \) be the {\bf Busemann function} centered at \( \xi \in \tilde{M}
(\infty ) \)
and based at \( q\in \tilde{M} . \) It is given by 
\[
b(p,q,\xi ):=\lim _{p_{n}\rightarrow \xi }(d(q,p_{n})-d(p,p_{n}))\] 
for $p,q\in \tilde{M}$
and is independent of the sequence \( p_{n}\rightarrow \xi . \) 
\end{defn}
\begin{remark}
The function \textbf{\( b \)} satisfies \(b(p,q,\xi )=-b(q,p,\xi )\). Moreover,
\[
b(p,q,\xi )=\lim _{t\rightarrow \infty }(d(c_{p,\xi }(t),q)-t)\]
 where \( c_{p,\xi } \) is the geodesic parameterized by arclength
 with \( c_{p,\xi }(0)=p \) and \(
c_{p,\xi }(t)\rightarrow \xi  \)
as \( t\rightarrow \infty . \) 

For \( \xi \) and \(\, p \) fixed, we have 
\begin{eqnarray*}
b(p,p_{n},\xi )&\rightarrow& -\infty \textrm{ }\, \textrm{ for }\, \textrm{
}p_{n}\rightarrow \xi 
\\
b(p,p_{n},\xi )&\rightarrow&+ \infty \textrm{ }\, \textrm{ for }\,
\textrm{ }\lim _{n}p_{n}\in \tilde{M} (\infty )\setminus \{\xi \}.
\end{eqnarray*}
We use the sign convention where $b(p,q,\xi)$ is negative whenever $p,q,\xi$
lie on a geodesic in this particular order.
\end{remark}
\begin{defn}
\( (\mu_p)_{p\in\tilde{M}} \) is a \( h \)-dimensional 
{\bf Busemann density} (also called
{conformal density}) if the following are true:
\begin{itemize}
\item For all \( p\in \tilde{M} , \) \( \mu _{p} \) is a finite nonzero Borel measure
on
\( \tilde{M} (\infty ). \) 
\item \( \mu _{p} \) is equivariant under deck transformations, i.e., for all
\( \gamma \in \pi _{1}(M) \)
and all measurable \( S\subset \tilde{M} (\infty ) \) we have 
\[
\mu _{\gamma p}(\gamma S)=\mu _{p}(S).
\] 
\item When changing the base point of \( \mu _{p}, \) the density transforms as
follows:
\[
\frac{d\mu _{p}}{d\mu _{q}}(\xi )=e^{-hb(q,p,\xi )}.
\]
\end{itemize}
\end{defn}
In the case of nonpositive curvature, Knieper has shown in \cite{knie
ann} that \( \mu _{p} \) is unique up to a multiplicative 
factor and that it can be obtained by the Patterson-Sullivan
construction.
\section {The measure of maximal entropy}
In section \ref{lastsection} 
 we will use the fact that if $m$ is the
measure of maximal
entropy then it gives rise to conditional measures $m^u$, $m^{0u}$,
$m^s$ and $m^{0s}$  on unstable, weakly unstable, stable and weakly
stable leaves
which have the property that 
the
measures $m^{0u}$ and $m^u$ expand uniformly with $t$
and that  $m^{s}$ and $m^{0s}$ contract uniformly with $t$.

\begin{remark} In \cite{Gun} we present an alternative and more general
 construction of the measure of
maximal entropy in nonpositive curvature and rank one which 
follows the principle of Margulis'
 construction. Using that construction, the uniform
expansion/contraction properties shown here are already a
straightforward consequence of
the construction. 
Also, that construction works for non-geodesic
flows satisfying
suitable cone 
conditions (see \cite{Kat2} for these).
 On the other hand, Knieper's approach, which substantially
 requires properties of rank one nonpositively curved manifolds, 
is shorter and therefore is the one we use in this article.

\end{remark}

First we give Knieper's definition of the measure of maximal
entropy (\cite{knie ann}):
\begin{defn}\label{def:knieper def}
Let $(\mu_p)_{p\in\tilde{M}}$ be a Busemann density. Let $$\Pi:S\tilde M\to
\tilde{M}(\infty)\times \tilde{M}(\infty),\quad
\Pi(v):=(v_\infty,v_{-\infty})$$ be the
projection of a vector to both endpoints 
$v_{\pm\infty}=\lim_{t\to\pm\infty}\pi g^tv$ of the corresponding geodesic.
Then the measure of maximal entropy of a set 
$A\subset S\tilde{M}$ (we can without loss of generality assume $A$ to be
regular) is given
by
\[m(A):=\int\limits_{\xi,\eta\in
\tilde{M}(\infty),\ \xi\neq\eta}\len(A\cap
\Pi^{-1}(\xi,\eta))e^{-h(b(p,q,\xi)+b(p,q,\eta))}d\mu_p(\xi)d\mu_p(\eta),\]
where $q\in\pi\Pi^{-1}(\xi,\eta)$ and $p\in\tilde{M}$ is arbitrary.
\end{defn}
Here $\len$ is the length of the geodesic segment. Saying that
$\Pi^{-1}(\xi,\eta)$ is a geodesic
already is a slight simplification, but a fully justified one since we need to
deal only with the regular set.

\subsection{Discussion of the conditionals}
Given a vector $v\in S\tilde M$ with base point $p$, we want to put a
conditional measure $m^u$ on the stable horosphere $b(p,.,\xi)^{-1}(0)$
given by $v$ and 
centered at $\xi:=v_\infty$
(or on $W^s(v)$, which is the unit normal bundle of
$b(p,.,\xi)^{-1}(0)$). This conditional is determined by a
multiplier with respect to some given measure on this horosphere.
Note that the set of points $q$ on the horosphere is parameterized by
the set $\tilde M(\infty)\setminus\{\xi\}$ via projection from $\xi$
 into the boundary at
infinity, hence the multiplier
depends on
$\eta:=\text{proj}_\xi(q)$, i.e. is proportional to $d\mu_x(\eta)$ for some $x$. 
The  canonical choice for $x$ is $p$.
Clearly the whole horosphere has infinite $m^u$-measure, but $\mu_x$ is
finite for any $x$. Thus the multiplier of $d\mu_p$ has to have a singularity,
and
this has to happen at $\eta=\xi$ since any neighborhood of $\xi$ is the
projection of 
the complement of a
compact piece of the horosphere. The term 
$e^{-hb(p,q,\eta)}$ has the right singularity (note that $\eta\to\xi$
means $q\to\xi$), and  by the basic properties of the Busemann 
function the term $e^{-hb(p,q,\eta)}$ then
converges to infinity. 
Therefore we  investigate
${m}_p(q):=e^{-hb(p,q,\eta)}d\mu_p(\eta).$ First
 we prove that this is
indeed the stable conditional measure for $dm^s.$
We will parameterize $dm$ by vectors instead of their base points.

\begin{defn}\label{def cond} For $v,w\in S\tilde M$, let
\begin{eqnarray*}
dm_v^u(w)&:=&e^{-hb(\pi v,\pi
w,w_\infty)}\cdot d\mu_{\pi v}(w_\infty),\\
dm_v^s(w)&:=&e^{-hb(\pi v,\pi
w,w_{-\infty})}\cdot d\mu_{\pi v}(w_{-\infty}).
\end{eqnarray*}
\end{defn}

\begin{prop}\label{prop:condKn} $dm_v^s, dm_v^u$ and $dt$ are the stable, unstable
and center 
conditionals of the measure of maximal entropy. 
\end{prop}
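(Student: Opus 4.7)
The plan is to work locally in a regularly coordinated box $U$ around a regular vector $v$ (such a box exists by Proposition \ref{prop:reg coord}) and to exhibit the product factorisation $dm = dm_v^u\,dm_v^s\,dt$ directly from Definition \ref{def:knieper def}. By Lemma \ref{lem:reg iff transversal} the three foliations are transverse at $v$, so the map $w\mapsto (w_\infty, w_{-\infty}, t)$---with $t$ a flow parameter along $\Pi^{-1}(w_\infty, w_{-\infty})$ measured from a fixed local transversal---furnishes smooth product coordinates on $U$. In these coordinates, $W^u(v)\cap U$ is the slice on which $t$ is fixed and $w_{-\infty}=v_{-\infty}$, while $W^s(v)\cap U$ is the slice on which $t$ is fixed and $w_\infty=v_\infty$; this is the foliation structure against which I disintegrate.

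Next I rewrite $m(A)$ in these coordinates. The first observation is that $b(p,q,\xi)+b(p,q,\eta)$ does not depend on the choice of the point $q$ along $\Pi^{-1}(\xi,\eta)$: flowing $q$ by time $s$ shifts the two Busemann summands by $\mp s$, which cancel. One may therefore take $q=\pi w$, the base point of the variable of integration. Writing $\len(A\cap\Pi^{-1}(\xi,\eta))=\int \mathbf{1}_A\,dt$ and applying Fubini then gives
\[
m(A)=\int \mathbf{1}_A(w)\, e^{-hb(p,\pi w,w_\infty)}\,d\mu_p(w_\infty)\cdot e^{-hb(p,\pi w,w_{-\infty})}\,d\mu_p(w_{-\infty})\,dt.
\]

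The final step is to identify the factors. The Patterson-Sullivan transformation rule $d\mu_p(\xi)=e^{-hb(\pi v,p,\xi)}\,d\mu_{\pi v}(\xi)$, combined with the Busemann cocycle $b(\pi v,p,\xi)+b(p,\pi w,\xi)=b(\pi v,\pi w,\xi)$, yields
\[
e^{-hb(p,\pi w,\xi)}\,d\mu_p(\xi)=e^{-hb(\pi v,\pi w,\xi)}\,d\mu_{\pi v}(\xi),
\]
and similarly with $\eta$ in place of $\xi$; the auxiliary basepoint $p$ drops out. Comparing with Definition \ref{def cond} produces
\[
dm(w)=dm_v^u(w)\,dm_v^s(w)\,dt,
\]
which is exactly the assertion that the three measures are the conditionals of $m$. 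Since the singular set has $m$-measure zero, this local verification on regularly coordinated boxes is enough. The main bookkeeping concern will be the interplay of the competing basepoints $p$, $q$, $\pi v$ and $\pi w$; but the $q$-invariance of the integrand along flow lines and the conformal transformation rule for $\mu$ are engineered precisely so that everything recentres cleanly at $\pi v$.
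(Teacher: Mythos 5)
Your proof is correct and follows essentially the same route as the paper's: both hinge on the flow-invariance of $b(p,q,\xi)+b(p,q,\eta)$ along the geodesic $\Pi^{-1}(\xi,\eta)$ together with the conformal/Busemann cocycle to re-center the base point at $\pi v$, so that Knieper's defining formula matches $dt\,dm_v^u\,dm_v^s$. The only difference is direction (you unwind Definition \ref{def:knieper def} into the product of conditionals, the paper starts from the proposed conditionals and shows the product reproduces Definition \ref{def:knieper def}); the computation is the same.
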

\begin{proof}
Observe that \begin{eqnarray*}\label{newformula}
dt\,dm_v^u(w)\,dm_v^s(w)&=&dt\,e^{-h(b(\pi v,\pi w,w_\infty)+b(\pi v,\pi
w,w_{-\infty}))}\\&&\cdot\, d\mu_{\pi v}(w_\infty)d\mu_{\pi v}(w_{-\infty})
\\&=&dt\,e^{-h(b(p,q,\xi)+b(p,q,\eta))}d\mu_{p}(\xi)d\mu_{p}({\eta})
\ =:E
\end{eqnarray*}
with $p:=\pi v,\ q:=\pi w,\ \xi:=w_\infty, \ \eta:=w_{-\infty}$. This
formula already agrees with the formula in Definition \ref{def:knieper def},
although the
meaning of the parameters does not
necessarily do so: In Definition \ref{def:knieper def},
$p$ and to some extend $q$ are arbitrary in $\tilde M$, while in the 
formula for $E$ they 
are fixed. Thus we need to show that if we change them within the range
allowed in Definition \ref{def:knieper def}, the value of $E$ does not
change.
\begin{lemma}
The term  $E$ does not change if 
$q$ is replaced by any point in $\tilde M$ on the geodesic
$c_{\eta\xi}$ from  $\eta$ to $\xi$
and 
$p$ by an
arbitrary point in $\tilde M$.
\end{lemma}
\begin{proof}
First we show that $q$ can be allowed to be anywhere on $c_{\eta\xi}$:
Parameterize $c_{\eta\xi} $ by arclength with arbitrary parameter shift in the
direction 
from $\eta$ to $\xi$. Replacing
$q=c_{\eta\xi}(s)$ by $q'=c_{\eta\xi}(s')$ changes $b(p,q,\xi)$ to
$b(p,q',\xi)=b(p,q,\xi)-(s'-s)$ since we move the distance $s'-s$ closer
to $\xi$. It also changes $b(p,q,\eta)$ to
$b(p,q',\eta)=b(p,q,\eta)+(s'-s)$ since we move the distance $s'-s$ away
from 
to $\eta$.
Thus  $E$ does not change under such a translation of $q$.

Now fix $q$ anywhere on $c_{\eta\xi}$ and replace $p$ by some 
arbitrary $p'\in \tilde M$. Note that  
\begin{eqnarray*}
d\mu_{p'}(\xi)&=&e^{hb(p',p,\xi)}d\mu_p(\xi),\\
b(p',q,\xi)&=&b(p,q,\xi)+b(p',p,\xi),
\end{eqnarray*}
which of course also holds with $\xi$ replaced by $\eta$.
Thus
$$e^{-h(b(p',q,\xi)+b(p',q,\eta))}d\mu_{p'}(\xi)d\mu_{p'}({\eta})
=e^{-h(b(p,q,\xi)+b(p,q,\eta))}d\mu_{p}(\xi)d\mu_{p}({\eta}).$$ Hence 
  $E$ also does not change if $p$ is changed to any arbitrary
  point.
\end{proof}
This also concludes the proof of Proposition \ref{prop:condKn}.
\end{proof}
\subsection{Proof of uniform expansion/contraction of the conditionals}

Let $w_t$ denote $g^tw$.
\begin{thm}[Uniform expansion/contraction of the conditionals]
\label{thm:unif-exp-of-cond}
For all $t\in\mathbb{R}$ and all $v,w\in S\tilde M$ we have
$$dm_v^u(w_t)=e^{ht}\cdot dm_v^u(w),$$
$$dm_v^s(w_t)=e^{-ht}\cdot dm_v^s(w).$$
The same uniform expansion holds with $dm^u$ replaced by $dm^{0u}=dm^u dt$ and the same
uniform 
contraction with  $dm^s$ replaced by $dm^{0s}=dm^s dt$.
\end{thm}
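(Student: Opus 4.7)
The plan is to reduce the statement to a direct computation using two basic properties of the geodesic flow and the Busemann function: (i) the forward (resp. backward) endpoint is invariant under $g^t$, i.e. $(g^tw)_\infty=w_\infty$ and $(g^tw)_{-\infty}=w_{-\infty}$; and (ii) the Busemann cocycle identity $b(p,r,\zeta)=b(p,q,\zeta)+b(q,r,\zeta)$, which follows straight from the definition as a limit of distance differences.

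Unwinding Definition \ref{def cond} at the point $w_t=g^tw$ and using (i) gives
\[
dm_v^u(w_t)=e^{-hb(\pi v,\,c_w(t),\,w_\infty)}\,d\mu_{\pi v}(w_\infty).
\]
The main step is to evaluate $b(\pi v,c_w(t),w_\infty)$. Applying (ii) with the intermediate point $\pi w$ splits this as
\[
b(\pi v,c_w(t),w_\infty)=b(\pi v,\pi w,w_\infty)+b(\pi w,c_w(t),w_\infty).
\]
Since $c_w(t)$ lies on the geodesic from $\pi w$ toward $w_\infty$ at signed distance $t$, the sign convention stated in the excerpt (that $b$ is negative when the three arguments occur in this order along a geodesic) together with the alternative formula $b(p,q,\zeta)=\lim_{s\to\infty}(d(c_{p,\zeta}(s),q)-s)$ forces $b(\pi w,c_w(t),w_\infty)=-t$. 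Substituting yields $dm_v^u(w_t)=e^{ht}\,dm_v^u(w)$. The stable case is identical except that $c_w(t)$ moves \emph{away} from $w_{-\infty}$, so $b(\pi w,c_w(t),w_{-\infty})=+t$, producing the factor $e^{-ht}$.

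For the weakly unstable and weakly stable conditionals $dm^{0u}=dm^u\,dt$ and $dm^{0s}=dm^s\,dt$, one only needs to observe that the Lebesgue factor $dt$ along the flow direction is invariant under $g^t$, which acts on the $W^0$-fiber by translation by $t$. Hence the transverse $m^u$- or $m^s$-factor is the only one that scales, and the uniform expansion/contraction rates $e^{\pm ht}$ carry over without change.

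The statement is essentially a bookkeeping exercise once the right identities are in place, so I do not expect a conceptual obstacle; the only point requiring care is correctly tracking the sign of $b(\pi w,c_w(t),w_{\pm\infty})$, ensuring that the convention from the excerpt is applied consistently at both endpoints and that the equivariance formula $d\mu_p/d\mu_q=e^{-hb(q,p,\xi)}$ is used only to confirm that the choice of basepoint $\pi v$ in the definition of $dm_v^u$ does not interfere with the cocycle computation.
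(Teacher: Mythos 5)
Your proof is correct and follows essentially the same route as the paper's: invariance of the endpoints $w_{\pm\infty}$ under $g^t$, the Busemann cocycle identity $b(\pi v,\pi w_t,\zeta)=b(\pi v,\pi w,\zeta)+b(\pi w,\pi w_t,\zeta)$, and the explicit evaluation of the last summand along the geodesic $c_w$. One small remark in your favor: your signs are the correct ones---for $t>0$ one has $b(\pi w,\pi w_t,w_\infty)=-t$, so $b(\pi v,\pi w_t,w_{+\infty})=b(\pi v,\pi w,w_{+\infty})-t$, which indeed gives $dm_v^u(w_t)=e^{ht}\,dm_v^u(w)$; the paper's displayed intermediate equality in the unstable case has a sign typo (it reads $+t$), although its stated conclusion $e^{ht}$ is correct.
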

\begin{proof}
\begin{eqnarray*}
dm_v^s(w_t)
&=&e^{-hb(\pi v,\pi w_t,w_{-\infty})}d\mu_{\pi v}(w_{-\infty})
\\&=&e^{-h(b(\pi v,\pi w,w_{-\infty})+b(\pi w,\pi w_t,w_{-\infty}))}d\mu_{\pi
v}(w_{-\infty})
\\&=&e^{-hb(\pi v,\pi w,w_{-\infty})-ht}d\mu_{\pi v}(w_{-\infty})
\\&=&e^{-ht}\cdot e^{-hb(\pi v,\pi w,w_{-\infty})}d\mu_{\pi v}(w_{-\infty})
\\&=&e^{-ht}\cdot dm_v^s(w).
\end{eqnarray*}
Similarly, the equality 
$b(\pi v,\pi w_t, w_{+\infty}) = b(\pi v, \pi w,w_{+\infty})+t$ 
 yields
$$dm_v^u(w_t)=e^{ht}\cdot dm_v^u(w).$$
This shows the desired uniform expansion of $m^u$ and the uniform
contraction of $m^{s}$. 
From this we also immediately see the uniform expansion of $m^{0u}$ and
the uniform contraction of $m^{0s}$ since $dt$ is evidently invariant
under $g^t$.
\end{proof}

\subsection{Proof of holonomy invariance of  the conditionals}

Another important property of the conditional measures on the leaves is
holonomy invariance. 
We formulate holonomy invariance on infinitesimal unstable pieces here, but of
course this is equivalent to holonomy invariance that deals with pieces of
leaves of
(small) positive size. 

We consider positively asymptotic vectors $w,w'$ and
calculate the infinitesimal $0u$-measure on corresponding leaves. We let
$v,v'$ be some (arbitrary) base points used as parameters for the pieces of
leaves, so that $w$ lies in the same $0u$-leaf of $v$ and similarly $w'$ in
that of $v'$. The factor $dt$ is evidently invariant, so we do not have to
mention it any further.

\begin{thm}[Holonomy invariance of  the conditionals of the measure of maximal
entropy]\label{thm:holonomyinvariance}
$$dm_v^u(w)=dm_{v'}^u(w')$$ whenever $v'\in W^s(v),$ $w'\in W^s(w),$ $w\in
W^{0u}(v)$ and $w'\in W^{0u}(v').$ 
In that case also $dm_v^{0u}(w)=dm_{v'}^{0u}(w')$ holds.

Similarly,
$$dm_v^s(w)=dm_{v'}^s(w')$$ whenever $v'\in W^{u}(v),$ $w'\in W^{u}(w),$ $w\in
W^{0s}(v)$ and $w'\in W^{0s}(v'),$ and in that case also
$dm_v^{0s}(w)=dm_{v'}^{0s}(w')$ holds. 
\end{thm}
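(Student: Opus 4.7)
My plan is to reduce the holonomy-invariance identity to an algebraic manipulation of the conformal density transformation law and the cocycle identity for the Busemann function, using the key observation that certain base points land on the same horosphere.

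First, I would carefully identify the boundary endpoints of the four vectors in the holonomy rectangle. Since $v'\in W^s(v)$, the geodesics of $v$ and $v'$ share the same forward endpoint $v_\infty=v'_\infty$; likewise $w_\infty=w'_\infty$ from $w'\in W^s(w)$. Since $w\in W^{0u}(v)$ and $w'\in W^{0u}(v')$, the backward endpoints match: $v_{-\infty}=w_{-\infty}$ and $v'_{-\infty}=w'_{-\infty}$. Set $\xi:=w_\infty=w'_\infty$.

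Second, I would write the two sides of the claim using Definition \ref{def cond}. On the left, $dm_v^u(w)=e^{-hb(\pi v,\pi w,\xi)}\,d\mu_{\pi v}(\xi)$. On the right, $dm_{v'}^u(w')=e^{-hb(\pi v',\pi w',\xi)}\,d\mu_{\pi v'}(\xi)$. Use the Busemann-density transformation rule $d\mu_{\pi v'}(\xi)=e^{-hb(\pi v,\pi v',\xi)}\,d\mu_{\pi v}(\xi)$ to rewrite the right-hand side so that both sides carry the common factor $d\mu_{\pi v}(\xi)$. The identity then reduces to
\[
b(\pi v,\pi w,\xi)=b(\pi v,\pi v',\xi)+b(\pi v',\pi w',\xi).
\]
The cocycle identity $b(p,q,\xi)+b(q,r,\xi)=b(p,r,\xi)$ gives $b(\pi v,\pi w,\xi)=b(\pi v,\pi v',\xi)+b(\pi v',\pi w,\xi)$, so the problem further reduces to showing
\[
b(\pi v',\pi w,\xi)=b(\pi v',\pi w',\xi).
\]
This is the geometric heart of the argument: because $w'\in W^s(w)$ and $\xi=w_\infty$, the projections $\pi w$ and $\pi w'$ lie on the same horosphere based at $\xi$; since $b(p,\cdot,\xi)$ depends on its middle argument only through the horosphere through that argument, both sides agree. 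I would phrase this simply as a remark that the Busemann function descends to a function of the $\xi$-horosphere containing the base point.

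Third, I would upgrade from $m^u$ to $m^{0u}$. Because $dm_v^{0u}=dm_v^u\,dt$ and the flow parameter $dt$ is manifestly invariant under moving $w$ along the flow direction and under the holonomy map, the identity for $m^{0u}$ follows at once from the identity for $m^u$. The stable case is entirely symmetric: replacing $\xi=w_\infty$ by $\eta=w_{-\infty}$ and noting that $v'\in W^u(v)$ and $w'\in W^u(w)$ ensure $v_{-\infty}=v'_{-\infty}$, $w_{-\infty}=w'_{-\infty}$, while $w\in W^{0s}(v)$, $w'\in W^{0s}(v')$ ensure the forward endpoints agree, and then the same cocycle/horosphere argument concludes.

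The only point requiring any care is the reduction to the horosphere identity $b(\pi v',\pi w,\xi)=b(\pi v',\pi w',\xi)$; everything else is bookkeeping with the cocycle and the density transformation law.
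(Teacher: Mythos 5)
Your proposal is correct and follows essentially the same route as the paper: translate $w'\in W^s(w)$ into $w'_\infty=w_\infty$ and $b(\pi w,\pi w',w_\infty)=0$, then reduce everything to a common reference measure $d\mu_{\pi v}(w_\infty)$ using the conformal-density transformation rule, and close the argument with the Busemann cocycle identity; the only difference is cosmetic (you split $b(\pi v,\pi w,\xi)$ through $\pi v'$, whereas the paper splits $b(\pi v',\pi w',\xi)$ through $\pi v$, and the cancellation $b(\pi v',\pi v,\xi)+b(\pi v,\pi v',\xi)=0$ is made explicit there).
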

\begin{proof}
Note that the equation $w'\in W^s(w)$ is equivalent to the two equations
$$w'_\infty=w_\infty,$$
$$b(\pi w,\pi w',w_\infty)=0.$$ 
The latter equation is equivalent to $b(p,\pi w,w_\infty)=b(p,\pi w',w_\infty)$
for all $p\in\tilde M.$ Thus clearly
\begin{eqnarray*} dm_{v'}^u(w')
&=& e^{-hb(\pi v',\pi w',w_\infty)}d\mu_{\pi v'}(w'_\infty)
\\&=& e^{-hb(\pi v',\pi w',w_\infty)}d\mu_{\pi v'}(w_\infty).
\end{eqnarray*}
Now \begin{eqnarray*}b(\pi v',\pi w',w_\infty)
&=& b(\pi v',\pi v,w_\infty)+b(\pi v,\pi w',w_\infty)
\\&=& b(\pi v',\pi v,w_\infty)+b(\pi v,\pi w,w_\infty)
 \end{eqnarray*}
and $d\mu_{\pi v'}(w_\infty)=e^{-hb(\pi v,\pi v',w_\infty)}d\mu_{\pi
v}(w_\infty).$ Thus
\begin{eqnarray*} dm_{v'}^u(w')
&=& e^{-h(b(\pi v',\pi w',w_\infty)+b(\pi v,\pi v',w_\infty))}d\mu_{\pi
v}(w_\infty)
\\&=& e^{-h(b(\pi v',\pi v,w_\infty)+b(\pi v,\pi w,w_\infty)+b(\pi v,\pi
v',w_\infty))}d\mu_{\pi v}(w_\infty)
\\&=& e^{-hb(\pi v,\pi w,w_\infty)}d\mu_{\pi v}(w_\infty)
\\&=& dm^u_{ v}(w).
\end{eqnarray*}
The proof for $dm^s$ is analogous.
\end{proof}
Note that $m^{0u}$ is invariant under holonomy along $s$-fibers and $m^{0s}$ 
under holonomy along $u$-fibers, but  $m^u$ is not invariant under holonomy
along 
$0s$-fibers and $m^s$  not invariant under holonomy along $0u$-fibers
due to expansion (resp. contraction) in the flow direction. 
\section{Counting closed geodesics}
\label{lastsection}
In this final section we count the periodic geodesics on \( M. \)
The method used here is a generalization of the method which, for the special
case of negative curvature, was outlined in \cite{Tol} and provided with
more detail in \cite{kh}. Margulis (\cite{Mar1}, \cite{Mar4}) is the originator
of that method, although the presentation in this article looks quite different.
\begin{defn}
\label{defrelations}
Let $f=f(t,\varepsilon),\ g=g(t,\varepsilon):[0,\infty)\times (0,1)\to
(0,\infty)$ 
 be expressions depending on \( t \) and \( \varepsilon
. \) 
We are interested in the behavior for $t$ large and $\varepsilon>0$ small.

Write 
\[
f\sim g\]
if for all $\alpha>0$ there exists $\varepsilon_0 >0 $ such that for all
$\varepsilon\in(0,\varepsilon_0)$ there exists $t_0\in(0,\infty)$ such that for all
$t>t_0$  we have
\[\left|\ln\frac{f(t,\varepsilon)}{g(t,\varepsilon)}\right| <\alpha.\]

Write 
\[
f\bowtie g\]
if there exists $K\in\mathbb{R}$, $\varepsilon_0>0$ such that for all 
$\varepsilon\in(0,\varepsilon_0)$ there exists $t_0\in(0,\infty)$ such that for 
all $t>t_0$ we have
\[\left|\ln\frac{f(t,\varepsilon)}{g(t,\varepsilon)}\right| <K\varepsilon.\]

We write
\[
f\cong g\]
 if there exists \( f' \) with \( f\sim f'\bowtie g, \) i.e. if 
there exists $K\in\mathbb{R}$ so that for all $\alpha>0$ there exists $\varepsilon_0>0$
such that for all $\varepsilon\in(0,\varepsilon_0)$ there exists $t_0>0$ 
such that for all $t>t_0$ we have
\[\left|\ln\frac{f(t,\varepsilon)}{g(t,\varepsilon)}\right| <K\varepsilon+\alpha.\]
\end{defn} 
 Thus the relation
``$\cong$'' is
implied by both ``$\sim$'' and ``$\bowtie$'', 
which are the special cases $K=0$ and $\alpha=0$, respectively;
but the relations
 ``$\sim$'' and ``$\bowtie$'' are independent.
\begin{remark}Similarly, in the definition of ``$\sim$'' and ``$\cong$'', the variable
$t_0$ may depend on $\varepsilon$, i.e. the convergence in $t$ does not have to be 
uniform with respect to $\varepsilon$. All arguments in the rest of this article
work without requiring this uniformity.
\end{remark}
\begin{remark}Obviously these relations are also well-defined if the domain of
the functions $f,g$
is $[T_0,\infty)\times(0,\gamma)$ for some $T_0,\gamma>0$.
\end{remark}
\begin{lemma}
\label{lem:relationsarerelations}
The relations ``$\sim$'', ``$\bowtie$'' and ``$\cong$''
 are equivalence relations. 
\end{lemma}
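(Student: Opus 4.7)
The plan is to verify reflexivity, symmetry, and transitivity for each of the three relations, with transitivity being the only part requiring any thought. Throughout, the key observation is the additive identity
\[
\ln\frac{f(t,\varepsilon)}{h(t,\varepsilon)} \;=\; \ln\frac{f(t,\varepsilon)}{g(t,\varepsilon)} \,+\, \ln\frac{g(t,\varepsilon)}{h(t,\varepsilon)},
\]
which via the triangle inequality transforms each relation into a statement we may split into two halves.

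Reflexivity is trivial in all three cases because $\ln(f/f)\equiv 0$, and symmetry is trivial because $|\ln(f/g)|=|\ln(g/f)|$. So I would devote the proof entirely to transitivity. For ``$\sim$'', given $f\sim g$ and $g\sim h$ and given $\alpha>0$, I would apply the definition of ``$\sim$'' to both pairs with tolerance $\alpha/2$. This produces two values $\varepsilon_0^{(1)},\varepsilon_0^{(2)}>0$; take $\varepsilon_0:=\min(\varepsilon_0^{(1)},\varepsilon_0^{(2)})$. For each $\varepsilon\in(0,\varepsilon_0)$ the two hypotheses yield $t_0^{(1)}(\varepsilon)$ and $t_0^{(2)}(\varepsilon)$; set $t_0(\varepsilon):=\max(t_0^{(1)}(\varepsilon),t_0^{(2)}(\varepsilon))$. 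Then for $t>t_0(\varepsilon)$ the triangle inequality gives $|\ln(f/h)|<\alpha/2+\alpha/2=\alpha$, establishing $f\sim h$.

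For ``$\bowtie$'' the same bookkeeping works with the additive constants $K_1,K_2$ supplied by the two hypotheses: set $K:=K_1+K_2$, pick $\varepsilon_0:=\min(\varepsilon_0^{(1)},\varepsilon_0^{(2)})$, and for each $\varepsilon$ take the larger of the two $t_0$'s. Then $|\ln(f/h)|<K_1\varepsilon+K_2\varepsilon=K\varepsilon$. For ``$\cong$'', we need to exploit the precise quantifier order in Definition \ref{defrelations}: the constant $K$ is chosen \emph{before} $\alpha$, so given $f\cong g$ and $g\cong h$ we first extract $K_1,K_2$ from the two hypotheses and set $K:=K_1+K_2$. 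Then, given $\alpha>0$, we apply each hypothesis with tolerance $\alpha/2$ to obtain $\varepsilon_0^{(1)},\varepsilon_0^{(2)}$ and proceed as before to arrive at $|\ln(f/h)|<K_1\varepsilon+\alpha/2+K_2\varepsilon+\alpha/2=K\varepsilon+\alpha$.

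The only real obstacle is making sure the nested quantifiers line up correctly: in every case the variable $\alpha$ (when present) must be fixed \emph{before} choosing $\varepsilon_0$, and the constant $K$ (when present) must be chosen at the very outset, independent of $\alpha$. With this ordering respected, the argument is purely a matter of applying the triangle inequality for $\ln$ and halving the tolerances.
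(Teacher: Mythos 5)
Your proof is correct and is essentially the paper's argument: in both, reflexivity and symmetry are immediate, and transitivity follows from the additive triangle inequality for $\ln$, halving the tolerance $\alpha$, adding the constants $K$, taking the minimum of the two $\varepsilon_0$'s, and then the maximum of the resulting $t_0$'s, with the quantifier order (fix $K$ first, then $\alpha$, then $\varepsilon_0$, then $t_0$) respected. The only cosmetic difference is that the paper proves transitivity once for ``$\cong$'' and remarks that ``$\sim$'' and ``$\bowtie$'' are the special cases $K=0$ and $\alpha=0$ respectively (which the construction preserves), whereas you spell out all three cases explicitly.
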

\begin{proof}
It suffices to consider 
``$\cong$'' since the others are special cases of it. 
Reflexivity and symmetry are trivial.
If the functions $f_1,f_2,f_3:[0,\infty)\times (0,\infty)\to
(0,\infty)$ satisfy $f_1\cong f_2\cong f_3$, i.e., for $i=1,2$ we have
\(\exists K_i\ \forall\alpha>0\ \exists \varepsilon_{0,i}>0
\ \forall \varepsilon\in(0,\varepsilon_{0,i})\ \exists t_{0,i}>0\ \forall t>t_{0,i}:
\left|\ln (f_i(t,\varepsilon)/f_{i+1}(t,\varepsilon))\right| 
<K_i\varepsilon+\alpha/2\), then clearly 
\(\exists K_3\ \forall\alpha>0\ \exists \varepsilon_{0,3}>0
\ \forall \varepsilon\in(0,\varepsilon_{0,3})\ \exists t_{0,3}>0\ \forall
t>t_{0,3}:\)
\[\left|\ln({f_1(t,\varepsilon)}/{f_{3}(t,\varepsilon)})\right| 
<K_3\varepsilon+\alpha,\] namely $K_3:=K_1+K_2$,
$\varepsilon_{0,3}:=\min(\varepsilon_{0,1}(\alpha/2),\varepsilon_{0,2}(\alpha/2))$ and 
$t_{0,3}:=\max(t_{0,1}(\varepsilon_{0,3}),t_{0,2}(\varepsilon_{0,3})).$ 
This shows $f_1\cong f_3$.
\end{proof}
\begin{remark}
In Definition \ref{defrelations},
we could have written $\left|\frac{f(t,\varepsilon)}{g(t,\varepsilon)}-1\right|$
instead of $\left|\ln\frac{f(t,\varepsilon)}{g(t,\varepsilon)}\right|$.
This would be equivalent to our definition
since the terms
$\ln x$ and $|x -1|$ differ by at most a factor 2 (indeed any $a>1$) 
for all $x$ close enough to 1. The advantage of our notation is that
multiple estimates can easily be transitively combined, 
as seen in the proof of Lemma 
\ref{lem:relationsarerelations}. Also, our notation is symmetric in $f,g$.
\end{remark}
\subsection{The flow cube}
Fix any \( v_0\in \textrm{{\bf{Reg}}}. \) 
Choose sufficiently small \( \varepsilon >0 \) and
\( \delta >0 \)
such that $4\delta<2\varepsilon<\text{inj}(M)$ (the
injectivity radius of $M$), such that 
\( B_{4\varepsilon }(v_0)\subset \textrm{{\bf{Reg}}} \), and such that 
further requirements on the smallness of these which we will mention later are
satisfied. 
\begin{defn}
Let the \textbf{flow cube} be 
\(
A:=\overline{{B}^{s}}(g^{[0,\varepsilon ]}(\overline{{B}^{u}_{\delta
}}(v_0)))\subset \textrm{{\bf{Reg}}}.\)
Here $\overline{{B}^{u}_{\delta}}(v_0)$ is
the closed unstable ball of radius
$\delta$ around $v_0$. We choose 
$\overline{B^s}=\overline{B^s}(v)$ as
the closure of an open set contained in
the closed stable ball of radius $\delta$ around $v\in 
g^{[0,\varepsilon ]}(\overline{{B}^{u}_{\delta
}}(v_0))$; 
this set, which depends on $v$, 
can be chosen in such a way that it contains $v$ 
and that $A$ has
the following local product structure:
For all \( w,w'\in A \) there exists a unique \( \beta \in [-\varepsilon ,\, \varepsilon
] \)
such that 
\[
\overline{B^{s}}(w)\cap \overline{B^{u}_{2\delta }}(g^{\beta }w')\]
 is nonempty, and in that case it is exactly one point. 
This is the local product structure in {\bf{Reg}} 
described in Proposition \ref{prop:reg coord}.
We call $\overline{B^s}(v)$ the {\bf{stable fiber}}
(or stable ball) in $A$ containing $v$.
\begin{figure}[h]
\begin{center}
\footnotesize
\psfrag{d}{$\delta$}
\psfrag{z}{$v_0$}
\psfrag{e}{$\varepsilon$}
\includegraphics{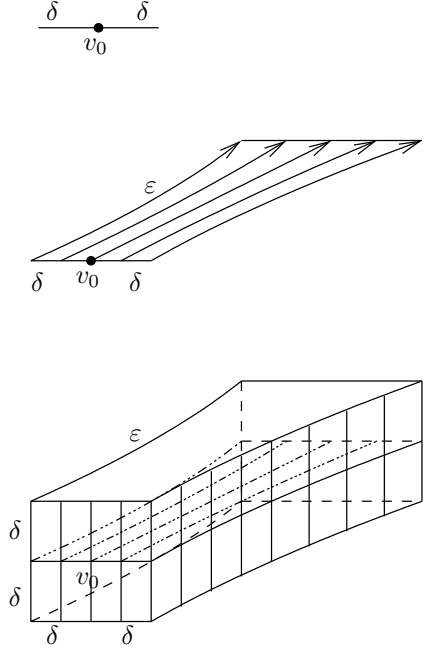}
\label{flowcube}\caption{The flow cube \protect\( A\protect \): an unstable
neighborhood 
of \protect \(v_0\) (top) is iterated (center) and a stable neighborhood of
that is formed (bottom).}
\end{center}
\end{figure}
\normalsize
\end{defn}
In the following arguments, the cube \( A \) will first be fixed. In particular, 
\( \varepsilon 
\)
and \( \delta  \) are considered fixed (although subject to restrictions on their
size). Then we make asymptotics certain numbers depending on $t$ and $A$ as
$t\to\infty$ (while $A$, hence $\varepsilon$, 
is fixed). Afterwards we will consider what happens to those
asymptotics when 
\( \varepsilon \to
0. \)

\begin{defn}
Let the \textbf{depth} \( \tau:A\to[0,\varepsilon]  \) be defined  by 
\[
 v\in \overline{B^{s}}(g^{\tau (v)}\overline{B^{u}_{\delta }}(v_0)).\]
\end{defn}
\begin{lemma}
For all \( v\in A,\, w\in \overline{B^{u}_{2\delta }}(v)\cap A \) it is true that
\[
|\tau (w)-\tau (v)|<\varepsilon ^{2}/2.
\]
\end{lemma}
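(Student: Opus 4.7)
The plan is to exploit the local product structure of $A$ (Proposition \ref{prop:reg coord}) together with uniform continuity of the stable and unstable foliations (Theorem \ref{thm:unif cts}). By definition, for each $v \in A$, the depth $\tau(v)$ is the unique $t \in [0,\varepsilon]$ such that $v$ lies in the stable fiber through some point $g^{t}u \in H := g^{[0,\varepsilon]}\overline{B^u_\delta}(v_0)$. Thus $\tau$ can be written as the composition of the stable-holonomy projection $\pi^s \colon A \to H$ with the flow-time coordinate on $H$. Both maps are continuous, so $\tau$ is continuous on the compact set $A$, hence uniformly so.

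To control the variation of $\tau$ along unstable balls I would parameterize $A$ in product coordinates $(u,t,s)$ where $u \in \overline{B^u_\delta}(v_0)$, $t \in [0,\varepsilon]$ is the flow-time and $s$ parameterizes the stable fiber of $A$, so that $\tau \equiv t$. For $v$ on the ``floor'' $H$ itself (i.e.\ $s = 0$), flow invariance of the unstable foliation ($g^t W^u(v_0) = W^u(g^t v_0)$) implies that any $w \in W^u(v) \cap H$ lies on the same unstable slice $g^{t_v} W^u(v_0)$, whence $\tau(w) = \tau(v)$ exactly. For general $v \in A$, the only source of variation of $\tau$ along $W^u(v)$ is the shearing of the unstable foliation as one moves across stable fibers; this shear is controlled by the modulus of uniform continuity of $W^u$ on the compact regular neighborhood $\overline{B_{4\varepsilon}(v_0)} \subset \textbf{Reg}$.

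The main obstacle is producing the specific quadratic bound $\varepsilon^2/2$, rather than merely an abstract ``small enough'' bound from uniform continuity. Since unstable leaves are tangent to $E^u \perp E^0$ at every point, the flow-direction displacement along $W^u$ has no first-order component, and the leading contribution to $\tau(w) - \tau(v)$ is a product of the unstable step (at most $2\delta$) with the stable-fiber size (at most $\delta$); by the standing constraint $4\delta < 2\varepsilon$ this product is already of order $\varepsilon^2$. Invoking uniform continuity of $W^s$ and $W^u$ on the compact regular neighborhood of $v_0$, I can impose a further smallness requirement on $\varepsilon$ and $\delta$---exactly one of the ``further requirements on the smallness of these'' alluded to in the definition of $A$---so that the implicit constant is dominated by $1/2$, yielding $|\tau(w) - \tau(v)| < \varepsilon^2/2$ for all $v \in A$ and $w \in \overline{B^u_{2\delta}}(v) \cap A$.
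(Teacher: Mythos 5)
Your concluding step is exactly the paper's proof: invoke uniform continuity of the horospherical foliations (Theorem \ref{thm:unif cts}, together with compactness of $SM$) and observe that the required bound can be absorbed into the standing ``further requirements on the smallness'' of $\delta$. So the core of your argument agrees with the paper, and the proof closes correctly.

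However, the middle paragraph is a detour that is neither needed nor rigorous. You frame ``producing the specific quadratic bound $\varepsilon^2/2$'' as the main obstacle and try to derive it from the pointwise orthogonality $E^u\perp E^0$ together with ``first-order vanishing'' of the flow-time displacement along $W^u$. That reasoning would require the foliations to be at least $C^1$ (or uniformly H\"older with controlled constants), so that you could Taylor-expand $\tau$ and argue a bilinear leading term of size $\lesssim\delta\cdot\delta$. In nonpositive curvature these foliations are only known to be continuous in general (H\"olderness holds only on Pesin sets), so the infinitesimal tangency $E^u\perp E^0$ does not give you quantitative control of the finite displacement. Moreover, no such estimate is needed: the constraint in the definition of $A$ is $4\delta<2\varepsilon$ together with unspecified further smallness, i.e.\ $\delta$ is only bounded above by $\varepsilon/2$ and may be taken as small as one likes relative to $\varepsilon$. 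Hence for the fixed target $\varepsilon^2/2$ you simply pick $\delta$ small enough via uniform continuity, with no need for a scaling law $\delta\sim\varepsilon$. The exact identity $\tau(w)=\tau(v)$ on the floor $H$ (your second paragraph) is correct, but it plays no role in the final argument once you drop the Taylor-expansion heuristic.
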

\begin{proof}
The foliation \( W^{u} \) is uniformly continuous by Theorem \ref{thm:unif cts}
and compactness of $SM$, 
and without loss of generality $\delta$ was chosen small enough.
\end{proof}
\begin{lemma}[Stable fiber contraction]\label{lem:sizeofstablefibers}
There is a function \( \sigma =\sigma (t) \) such that 
\[
m^{s}(g^{t}B^{s}(v))\bowtie \sigma (t)\]
 for all \( v\in A \).
In particular, for all \( v,w \) in \( A \) we have
\[
m^{s}(g^{t}B^{s}(v))\bowtie m^{s}(g^{t}B^{s}(w)).\]
Moreover, the constants in the relation ``$\bowtie$'' can be chosen independent
 of $v,w$, i.e.,  there exists $K>0$, $\varepsilon_0>0$ such that for all 
$0<\varepsilon<\varepsilon_0$ and all flow cubes $A=A(\varepsilon)$
there exists $t_0$ such that for 
all $t>t_0$ and all $v,w\in A$ we have
\[\left|\ln\frac{m^s(g^tB^s(v))}{\sigma(t)}\right| <K\varepsilon \text{ \ and \ }
\left|\ln\frac{m^s(g^tB^s(v))}{m^s(g^tB^s(w))}\right| <K\varepsilon.\]
\end{lemma}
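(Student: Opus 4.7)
The plan is to first collapse the $t$-dependence using the exact contraction identity of Theorem \ref{thm:unif-exp-of-cond}, and then handle the $v$-dependence using holonomy invariance together with the product structure of the flow cube.

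Integrating the pointwise flow identity $dm_v^s(w_t)=e^{-ht}\,dm_v^s(w)$ of Theorem \ref{thm:unif-exp-of-cond} over the set $B^s(v)\subset W^s(v)$, and using that the formula of Definition \ref{def cond} for $m_v^s$ is invariant under change of basepoint within a stable leaf, I obtain the \emph{exact} identity
\[
m^s\bigl(g^tB^s(v)\bigr)\;=\;e^{-ht}\,m^s\bigl(B^s(v)\bigr)\qquad (v\in A,\ t\in\mathbb{R}).
\]
Setting $\sigma(t):=e^{-ht}\,m^s(B^s(v_0))$, the ratio
\[
\frac{m^s(g^tB^s(v))}{\sigma(t)}\;=\;\frac{m^s(B^s(v))}{m^s(B^s(v_0))}
\]
is manifestly independent of $t$, so the lemma reduces to exhibiting $K,\varepsilon_0>0$ with
\[
\left|\ln\frac{m^s(B^s(v))}{m^s(B^s(w))}\right|<K\varepsilon\qquad\forall\,v,w\in A,\ 0<\varepsilon<\varepsilon_0,
\]
uniformly in $v,w$ and in the choice of cube $A=A(\varepsilon)$.

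To prove the latter, I combine the two remaining tools from Section~4: $u$-holonomy invariance of $m^s$ between $W^{0s}$-leaves (Theorem \ref{thm:holonomyinvariance}), and uniform continuity of the stable/unstable foliations on $SM$ (Theorem \ref{thm:unif cts}). The local product structure of the flow cube (built from Proposition \ref{prop:reg coord}) supplies a unique $\beta\in[-\varepsilon,\varepsilon]$ such that $u$-holonomy from $g^\beta w$ to $v$ takes $B^s(g^\beta w)$ onto $B^s(v)$, up to a boundary reshaping controlled by $|\tau(v)-\tau(g^\beta w)|<\varepsilon^2/2$ (the lemma immediately preceding). Theorem \ref{thm:holonomyinvariance} makes the $u$-holonomy contribution exact, and the boundary reshaping contributes a factor $e^{C_1\varepsilon}$ by uniform continuity on the compact space $SM$. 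The contraction identity applied with $t=\beta$ yields $m^s(g^\beta B^s(w))=e^{-h\beta}m^s(B^s(w))$, and a further uniform-continuity estimate compares $g^\beta B^s(w)$ with $B^s(g^\beta w)$ up to a factor $e^{C_2\varepsilon}$. Chaining these,
\[
\frac{m^s(B^s(v))}{m^s(B^s(w))}\;=\;e^{-h\beta+O(\varepsilon)}\;=\;e^{O(\varepsilon)}\qquad(|\beta|\leq\varepsilon),
\]
giving the bound with $K:=h+C_1+C_2$.

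The main obstacle I anticipate is verifying that the $B^s$-fibers—whose precise shape is dictated by the product structure of the cube rather than any canonical holonomy construction—are in fact $u$-holonomy images of one another up to a boundary whose $m^s$-mass is of multiplicative order $e^{O(\varepsilon)}$. Making this quantitative requires upgrading the merely qualitative uniform continuity of the foliations on $SM$ to explicit, uniform-in-cube estimates, which is where the compactness of $SM$ enters to keep all constants independent of $v_0$, of $\varepsilon$ and of the particular cube. Everything else in the argument—the flow contraction and the holonomy invariance—is exact, so the entire $e^{O(\varepsilon)}$ error is attributable to these boundary continuity estimates.
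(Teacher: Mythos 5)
Your first reduction---using the exact identity $m^s(g^tB^s(v))=e^{-ht}m^s(B^s(v))$ from Theorem \ref{thm:unif-exp-of-cond} to collapse the $t$-dependence and set $\sigma(t)=e^{-ht}m^s(B^s(v_0))$---is correct and matches what the paper itself notes in Remark \ref{rem:sigma asym}. The remaining content of the lemma, which you also correctly isolate, is the comparison $m^s(B^s(v))\bowtie m^s(B^s(w))$ for $v,w\in A$ with a constant $K$ independent of $v,w$ and of $A(\varepsilon)$.

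It is in that second step that there is a genuine gap. You reduce to a comparison of $g^\beta B^s(w)$ with $B^s(g^\beta w)$ (both subsets of $W^s(g^\beta w)$) and claim a multiplicative error $e^{C_2\varepsilon}$ ``by uniform continuity.'' But Theorem \ref{thm:unif cts} is purely topological; it does not, by itself, produce a multiplicative bound on $m^s$-measures. To conclude that the boundary mismatch costs only $e^{O(\varepsilon)}$ in measure one would need a quantitative regularity property of the conditional $m^s$ on stable leaves (an Ahlfors-regularity or doubling-type estimate), and no such property has been established---nor is it obvious in variable nonpositive curvature. Worse, the construction of the flow cube leaves the stable fibers $B^s(\cdot)$ only loosely specified (``the closure of an open set contained in the closed stable ball of radius $\delta$''), so there is no canonical relation between $g^\beta B^s(w)$ and $B^s(g^\beta w)$ that could make this boundary discrepancy quantitatively $\varepsilon$-small. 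Your final paragraph acknowledges this is the crux, but the proposed resolution does not close it.

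The paper's proof avoids this difficulty by never comparing $s$-fibers directly. It thickens both fibers into $0s$-cylinders $g^{[0,a]}B^s(v)$ and $g^{[0,a]}B^s(w)$ with flow-thickness $a>0$ fixed independently of $\varepsilon$, observes that the first is $u$-holonomic (via $m^{0s}$-invariance from Theorem \ref{thm:holonomyinvariance}) to a subset of the slightly enlarged cylinder $g^{[-2\varepsilon,a+2\varepsilon]}B^s(w)$, and computes the resulting error explicitly as a ratio of flow integrals,
\[
\frac{\int_{-2\varepsilon}^{a+2\varepsilon}e^{-ht}\,dt}{\int_{0}^{a}e^{-ht}\,dt},
\]
which is manifestly $1+O(\varepsilon)$. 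The whole $O(\varepsilon)$ error lives in the flow direction, where $m^{0s}=e^{-ht}\,dt\,dm^s$ is explicit, so no regularity of $m^s$ on stable leaves is ever needed. That fixed-$a$ thickening is the key mechanism your proof is missing.
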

\begin{proof}
First we show the second claim. Observe that
for any $a\in(0,\text{inj}(M)/2)$ (with $a$ independent of $\varepsilon$),
the set $g^{[0,a]}B^s(v)$ is
$u$-holonomic to a subset $S$ of $ g^{[-2\varepsilon,a+2\varepsilon]}B^s(w).$ 
Thus	
$$\frac{m^{s}(B^{s}(v))}{
m^{s}(B^{s}(w))}
=\frac{m^{0s}(g^{[0,a]}B^{s}(v))}{
         m^{0s}(g^{[0,a]}B^{s}(w))}
=\frac{m^{0s}(S)}{
         m^{0s}(g^{[0,a]}B^{s}(w))}
$$$$
\leq\frac{m^{0s}(
g^{[-2\varepsilon,a+2\varepsilon]}B^s(w))}{
         m^{0s}(g^{[0,a]}B^{s}(w))}
=\frac{
\int_{-2\varepsilon}^{a+2\varepsilon}e^{-ht}dt}{
\int_{0}^{a}e^{-ht}dt}\bowtie 1,$$ 
since the  quotient of the integrals can be bounded by $1+K\varepsilon$.
The inequality is symmetric in $v$ and $w$, proving equality.
Hence  \( m^{s}(B^{s}(v))\bowtie m^{s}(B^{s}(w)) \), i.e. 
$\exists K>0,\ \varepsilon_0>0\ \forall \varepsilon\in(0,\varepsilon_0)\ \exists
t_0\ \forall t>t_0\ \forall v,w\in A:$
$$\left|\ln\frac{m^s(B^s(v))}{m^s(B^s(w))}\right| < K\varepsilon.$$
Using uniform contraction on $s$-fibers (Theorem \ref{thm:unif-exp-of-cond})
gives 
$$\left|\ln\frac{m^s(g^tB^s(v))}{m^s(g^tB^s(w))}\right| < K\varepsilon,$$
i.e., $m^{s}(g^{t}B^{s}(v)) = m^{s}(g^{t}B^{s}(w))$,
 showing the second claim.
It immediately follows that we can  define 
\[ \sigma (t):=m^{s}(g^{t}B^{s}(v)) \]
for some arbitrary \( v\in A \), and
this definition does not depend on \( v \)
(up to \( \bowtie  \)-equivalence).
The constant $K$ in $\bowtie$ is independent of $v$. This also shows the first claim. 
\end{proof}
\begin{remark}\label{rem:sigma asym}
Uniform contraction (Theorem \ref{thm:unif-exp-of-cond}) then shows that  $\sigma(t)=
\textrm{const}\cdot e^{-ht} .$
\end{remark}
\subsection{Expansion at the boundary}
\begin{defn}
For the cube \( A \) as above, we call 
\begin{eqnarray*}
 &\partial^uA:=\overline{{B}^{s}}(g^{[0,\varepsilon ]}(\partial
B^{u}_{\delta 
}(v_0))) & \text{the \textbf{unstable end} of the cube,}\\
& \partial^sA:={(\partial{B}^{s})}(g^{[0,\varepsilon ]}(
\overline{B^{u}_{\delta 
}}(v_0))) & \text{the \textbf{stable end},}\\
&\partial_0A:= \overline{{B}^{s}
}(\overline{{B}^{u}_{\delta }}(v_0)) & \text{the \textbf{back end} and} \\
&  \partial_\varepsilon A:=\overline{{B}^{s}
}(g^{\varepsilon
}(\overline{{B}^{u}_{\delta }}(v_0))) & \text{the \textbf{front end} of the
cube.} 
\end{eqnarray*}
For \( v\in A \) define 
\[
s(v):=\sup {\{r:B^{u}_{r}(v)\subset A\}}
\]
 to be the \textbf{distance to the unstable end} of the flow cube.
\end{defn}
The stable and the unstable end are
topologically the product of an interval, a $k$-ball and a $(k-1)$-sphere,
where $k=\dim W^u(v)=\dim W^s(v)=(\dim SM -1)/2;$ hence they are connected iff
$k\neq 1$, i.e. iff $M$ is not a surface.
\begin{lemma}[Expansion of the distance to the unstable end]\label{lem:side-to-side}
There exists a monotone positive function \( S:\mathbb {R}\rightarrow \mathbb
{R} \)
satisfying \( S(t)\rightarrow 0 \) as \( t\rightarrow \infty  \) and such
that if \( s(v)>S(t) \) for an element \( v\in A \) which satisfies \(
g^{t}v\in A \) then
\[\overline{B^{u}_{2\delta }}(g^tv)\cap A\subset
g^{t}\overline{B^{u}_{S(t)}}(v).
\]
\end{lemma}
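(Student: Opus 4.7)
My plan is to argue by contradiction. I would define
\[
S(t) := \sup\{d^u(v,w) : v \in A,\ w \in W^u(v),\ g^tv, g^tw \in A,\ d^u(g^tv, g^tw) \leq 2\delta\},
\]
and, after replacing by the monotone envelope $\sup_{s \geq t} S(s)$ if necessary, prove $S(t) \to 0$. In nonpositive curvature, unstable Jacobi fields have nondecreasing norm, so $d^u$ is nondecreasing under the flow; in particular, any pair in the above supremum satisfies $d^u(v,w) \leq d^u(g^tv, g^tw) \leq 2\delta$, so $S(t)$ is finite and the sup is taken over a relatively compact family of pairs.

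Suppose $S(t) \not\to 0$. Then there exist $\eta > 0$, $t_n \nearrow \infty$, and pairs $v_n, w_n$ with the above properties and $d^u(v_n, w_n) \geq \eta$. Using compactness of $A \subset \textrm{Reg}$ together with the uniform bound $d^u(v_n, w_n) \leq 2\delta$, I would extract convergent subsequences $v_n \to v_* \in A$ and $w_n \to w_* \in W^u(v_*)$ with $d^u(v_*, w_*) \geq \eta > 0$, where $w_* \in W^u(v_*)$ follows from uniform continuity of the unstable foliation (Theorem \ref{thm:unif cts}). The key dynamical input is the classical Hadamard-Cartan fact that $s \mapsto d(c_{v_n}(s), c_{w_n}(s))$ is convex: its values at $s = 0$ and $s = t_n$ are both bounded by $2\delta$ (since the Riemannian distance between footpoints is dominated by the unstable leaf distance), so by convexity the bound $d(c_{v_n}(s), c_{w_n}(s)) \leq 2\delta$ holds uniformly for all $s \in [0, t_n]$. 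For each fixed $s \geq 0$, letting $n \to \infty$ and using continuity of the geodesic flow gives $d(c_{v_*}(s), c_{w_*}(s)) \leq 2\delta$ for every $s \geq 0$, i.e.\ $v_* \sim w_*$.

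Combined with $w_* \in W^u(v_*)$, which gives $-v_* \sim -w_*$, the Remark on flat strips forces $v_*$ and $w_*$ to bound a flat strip in $\tilde M$. Since $v_* \in \textrm{Reg}$, this strip must have zero width, so $w_*$ lies on $c_{v_*}$, i.e., $w_* = g^\tau v_*$ for some $\tau \in \mathbb{R}$. Transversality of $W^u(v_*)$ and $W^0(v_*)$ at a regular vector (Lemma \ref{lem:reg iff transversal}) then forces $\tau = 0$ and $w_* = v_*$, contradicting $d^u(v_*, w_*) \geq \eta$. The main obstacle I anticipate is bookkeeping: carefully justifying the monotonicity of $d^u$ and the uniform $2\delta$ bound at $s = 0$ via the comparison of leaf/Sasaki/Riemannian distances, so that the convexity argument does not degrade as $n \to \infty$. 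Once that uniform bound is secured, the flat-strip/rank-one/transversality finish is immediate.
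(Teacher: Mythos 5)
Your argument is correct, and it is worth noting that it matches almost exactly the \emph{Remark} that the paper places right after the lemma, rather than the main proof. The paper's own proof runs forward: nonpositive curvature forces $d(g^t p,g^t q)$ to be nondecreasing on unstable leaves, convexity plus rank one forces these distances to be unbounded, hence for each $\gamma>0$ there is a $T_\gamma$ with $g^{T_\gamma}B^u_\gamma(v)\supset B^u_{2\delta}(g^{T_\gamma}v)$, uniformly in $v$ by compactness of $A$; one then sets $S:=T_\cdot^{-1}$. Your proposal instead defines $S(t)$ directly as the supremum it needs to be, bounds it by $2\delta$ via noncontraction to get compactness of the admissible pairs, and runs the contradiction: a limit pair $(v_*,w_*)$ with $w_*\in W^u(v_*)$ and $d(c_{v_*}(s),c_{w_*}(s))\le 2\delta$ for all $s\geq 0$ (by convexity of $s\mapsto d(c_{v_n}(s),c_{w_n}(s))$ and passing to the limit) would be both positively and negatively asymptotic without lying on a common orbit, contradicting the flat strip theorem and rank one on a neighborhood of $A$. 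This is precisely the contradiction the Remark sketches in one sentence, so you have essentially reconstructed the paper's alternative derivation. The one imprecision is the final appeal to Lemma~\ref{lem:reg iff transversal}: transversality of $W^u(v_*)$ and $W^0(v_*)$ at $v_*$ is a local statement and by itself only rules out nearby intersections; the clean global reason that $w_*=g^\tau v_*\in W^u(v_*)$ forces $\tau=0$ is that the Busemann function $b(\pi v_*,\cdot\,,(v_*)_{-\infty})$ is strictly monotone along $c_{v_*}$, so the unstable horosphere meets the orbit only once (alternatively, one can observe $|\tau|\le d(\pi v_*,\pi w_*)\le 2\delta$ is small, so local transversality does suffice, but you should say so). Also remember to guarantee strict positivity of $S$, e.g.\ by replacing $S(t)$ with $\max(S(t),1/t)$ before taking the monotone envelope; enlarging $S$ only strengthens the asserted inclusion, so this costs nothing.
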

That means that 
if a point $v$ is more than $S(t)$ away
from 
the 
unstable end of the cube then the 
the image under $g^t$ of a small
$u$-disc (of size $>S(t)$) around $v$
has the property that
its unstable end is completely
outside $A$.
\begin{proof}
By nonpositivity of the curvature, \(
B^{u}_{\delta } \) noncontracts, i.e., for all $p,q\in B_\delta^u$ the function
$t\mapsto d(g^t p,g^t q) $ is nondecreasing.
This is true even infinitesimally, i.e.
for unstable Jacobi fields. 
By convexity of Jacobi fields and rank 1, such distances
also cannot stay bounded.
Hence 
the radius of the largest $u$-ball
contained in $g^tB^u_\delta$ becomes 
unbounded for $t\to\infty.$

Hence for all $\gamma>0$ we can find $T_\gamma<\infty$ such that 
\begin{equation}\label{gT supset}
 g^{T_\gamma}B^u_{\gamma}(v)\supset B^u_{2\delta}(g^{T_\gamma}(v)).
 \end{equation}
By compactness of $A$, this choice of $T_\gamma$
 can be made independently of $v\in A$.
Without loss of generality $T_\gamma $ is a strictly
decreasing function of $\gamma$. 
Choose a function $S:[0,\infty)\to(0,\infty)$ so that $S(t)\leq\gamma$ for $t>T_\gamma.$ 
E.g., choose $S(.)=T_{.}^{-1}$, i.e. $T_{S(t)}=t$ for $t\geq 0$.
$S$ can be chosen decreasing since $T_.$ can be.
Therefore, given  $v\in A$, if
$t>T_{s(v)}$ then $s(v)>S(t)$, and
thus  equation (\ref{gT supset})
shows the claim.
\end{proof}
\begin{remark}
The convergence of \( S \) to zero in the previous Lemma is not
necessarily
exponential, as opposed to the case where the curvature of $M$
 is negative (i.e.  the 
uniformly hyperbolic case). However, we do not
need this property of exponential convergence.

If the smallest such \( S \) would not converge to zero, it would require the
existence of a flat strip of width 
\(
\liminf _{t\rightarrow \infty }S(t)=\lim _{t\rightarrow \infty
}S(t),\)
 which would intersect \( {A}. \) Since a
 neighborhood of \( A \)
is regular,
this cannot happen.
\end{remark}
\subsection{Intersection components and orbit segments}
\begin{defn}
Let \( A_{t}' \) be the set of \( v\in A \) with \( s(v)\geq S(t) \) and 
\( \tau (v)\in [\varepsilon ^{2},\varepsilon -\varepsilon ^{2}] \). 
Thus \( A_{t}' \) is the set \( A \) with a small neighborhood of the unstable
end
and of the front end and back end removed.
\end{defn}
\begin{defn}
Let \( \Phi _{t} \) be the set of all \textbf{full components of intersection}
at time $t$:
If $I\textrm{ is a connected component of }A'_{t}\cap
g^{t}(A'_{t})$ then define
\[
\phi _{t}^I:=g^{[-\varepsilon ,\varepsilon
]}(I)\cap A\cap g^{t}(A) ,
\] 
\[
\Phi_t:=\{\phi_t^I:I\textrm{ is a connected component of }A'_{t}\cap
g^{t}(A'_{t})\}.
\]

Let $N(A,t):=\#\Phi_t$
be the number of  elements of $\Phi_t$.

We call the set \(g^{[-\varepsilon ,\, \varepsilon ]}v\cap A\) the  
\textbf{geometric orbit segment} of length \(  \varepsilon  \) in $A$  through
\( v. \)   Similarly we
speak about the 
orbit segment of length \(  \varepsilon-2\varepsilon^2  \)  in $A'_t$.

Let $\Phi_t^s:=\{\phi_t^I\in\Phi_t:\phi_t^I\text{ intersects }\partial^s A'_{t}\}$.
\end{defn}
\begin{lemma}\label{lem:segtocomp}
For every geometric orbit segment of length \( \varepsilon -2\varepsilon ^{2} \) 
in \( A'_{t} \) that
belongs
to a periodic orbit of period in 
\( [t-\varepsilon +2\varepsilon ^{2},t+\varepsilon -2\varepsilon ^{2}] \)
there exists a unique $\phi_t^I\in\Phi_t$ through which the geometric orbit segment
passes.
\end{lemma}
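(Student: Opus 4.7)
My plan is to handle existence and uniqueness separately, treating the period $\tilde t \in [t-\varepsilon+2\varepsilon^2, t+\varepsilon-2\varepsilon^2]$ as a small flow-time perturbation of $t$. I will write the geometric orbit segment as $\gamma = g^{[a',b']}v$ with $b'-a' = \varepsilon - 2\varepsilon^2$, and set $\beta := \tilde t - t$, so $|\beta| \leq \varepsilon - 2\varepsilon^2$. The key observation is that on the closed orbit $\mathcal{O}$ of $v$, the map $g^{-t}$ acts as the flow-time translation $g^{\beta}$, since $g^{\tilde t}v = v$.

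For existence, I would produce a point $u = g^s v \in \gamma$ that also lies in $g^t(A'_t)$, equivalently satisfies $g^{s+\beta}v \in A'_t$. Intersecting the admissible sets for $s$ and for $s+\beta$ within the visit-interval $[a',b']$ yields a nonempty orbit-time interval of length $\varepsilon - 2\varepsilon^2 - |\beta| \geq 0$. Any point $u$ picked from this interval sits in $A'_t \cap g^t(A'_t)$ and hence in some connected component $I$ of that set, giving $u \in I \subset \phi_t^I$, so $\gamma \cap \phi_t^I \neq \emptyset$.

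For uniqueness, suppose $\gamma \cap \phi_t^{I_j} \neq \emptyset$ for $j=1,2$. I pick $u_j = g^{r_j} w_j$ with $w_j \in I_j$ and $|r_j| \leq \varepsilon$. Since $\mathcal{O}$ is flow-invariant, $w_j \in \mathcal{O}$, so I may write $w_j = g^{s_j - r_j}v$. The conditions $w_j \in A'_t$ and $g^{-t}w_j \in A'_t$ then force both $s_j - r_j$ and $s_j - r_j + \beta$ to lie in $[a',b']$, so both $w_j$ sit on the same connected orbit-piece $g^{[a',b'] \cap [a'-\beta, b'-\beta]}(v) \subset A'_t \cap g^t(A'_t)$. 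This immediately implies $I_1 = I_2$.

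The principal obstacle I anticipate is ruling out that $s+\beta$ (in the existence step) or $s_j - r_j$ (in uniqueness) lands in a \emph{different} visit of $\mathcal{O}$ to $A'_t$, either via the orbit returning to $A$ multiple times within one period or via a nontrivial periodic shift $k\tilde t$ with $k \neq 0$. This is precisely where I would exploit that $t$, hence $\tilde t$, greatly exceeds $\varepsilon$ once $t$ is large enough: any alternative visit is displaced on the orbit by at least roughly $\tilde t$, which dwarfs the $O(\varepsilon)$ quantities appearing in the argument. Combined with the regularity of $A$ (which ensures the flow-box structure is well-behaved on a whole neighborhood), this should let me keep the relevant periodic-shift index equal to zero throughout, so that the single-visit analysis above is justified.
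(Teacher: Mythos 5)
Your proof takes a genuinely different route from the paper's, working entirely on the orbit level rather than inside the local product structure of $g^tA'_t$.  Your existence step is essentially the same as the paper's.  Your uniqueness step, however, has a gap.

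You observe correctly that, writing $w_j=g^{s_j-r_j}v$ with $s_j\in[a',b']$ and $|r_j|\le\varepsilon$, the condition $w_j\in A'_t$ would place $w_j$ on the visit $[a',b']$ \emph{provided} the orbit does not re-enter $A'_t$ at some other parameter in $[a'-\varepsilon,b'+\varepsilon]$.  You flag this as the obstacle, and then assert that ``any alternative visit is displaced on the orbit by at least roughly $\tilde t$.''  That estimate is correct for the ambiguity coming from a nontrivial periodic shift $k\tilde t$, $k\neq 0$, but it is false for what you yourself identify as the other possibility, namely the orbit \emph{returning to $A$ multiple times within one period}.  A periodic orbit of period $\tilde t\approx t$ typically visits the fixed cube $A$ on the order of $t\cdot m(A)$ times, so consecutive visits are separated on average by roughly $1/m(A)$, a constant independent of $t$; they are certainly not separated by $\tilde t$.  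Whether a return can actually occur within flow-time $<\varepsilon$ of an exit is a nontrivial geometric question about the cube $A$ (it hinges on the relative sizes of $\delta$, $\varepsilon$, and $\varepsilon^2$, and on the shape of the stable and unstable ends), and you supply no argument for it.  Without that, the claim that both $s_j-r_j$ and $s_j-r_j+\beta$ lie in $[a',b']$ is unjustified, and the whole uniqueness step collapses.

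The paper sidesteps this difficulty entirely.  Instead of reasoning along the orbit, it connects the two points by a short $u$-$0$-$s$ path $c$ inside the path-connected set $g^tA'_t$, applies $g^{-t}$, and invokes the expansion of unstable fibers under $g^t$ and of stable fibers under $g^{-t}$ (Lemma \ref{lem:side-to-side}) to conclude that the $u$- and $s$-segments of $g^{-t}\circ c$ must have zero length.  Hence $c$ is a flow segment lying in both $A'_t$ and $g^tA'_t$, forcing the two intersection components to coincide.  This argument needs no control on return times to $A$ at all; the hyperbolic expansion/contraction does the work that your argument tries to extract from a separation-of-visits estimate.  If you want to salvage an orbit-level proof, you would have to prove a quantitative no-fast-return lemma for $A$, which is exactly the extra geometric input your sketch is missing.
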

\begin{proof}
Existence: If $g^Lo=o$ for an orbit segment $o $ of length 
\( \varepsilon -2\varepsilon ^{2} \)
of \( A'_{t} \) that belongs
to a periodic orbit of period  
\(L\in [t-\varepsilon +2\varepsilon ^{2},t+\varepsilon -2\varepsilon ^{2}] \)
then $o$ also intersects $g^tA'_t$, hence some component of
$A'_t\cap g^tA'_t.$

Uniqueness: Assume that $o$ passes through
$\phi_t^I,\phi_t^J\in\Phi_t,\,$
i.e. $p=o(a)\in \phi_t^I,\ q=o(b)\in \phi_t^J$ for $ |b-a|\leq\varepsilon$.
Then $o$ passes through $I,J$ (the connected
components corresponding to $\phi_t^I,\phi_t^J$) respectively. 
Without loss of generality, $p,q\in A'_t$.
Since $g^tA_t'$ is pathwise connected,
there is a path $c$ in $g^tA_t'$ from $p$ to
$q$. 
Using the local product structure,
we can assume that
$c$ consists of a segment in $W^u$, followed
by a segment in $W^0$, followed by a
segment in $W^s$.
By applying $g^{-t}$, we get a path $g^{-t}\circ c$ 
in $A_t'$ from $o(a-t)\in A_t'$ to
$o(b-t)\in
A_t'$.
The local product structure in $A_t'$
 and the fact that 
distances along unstable fibers become $>2\varepsilon$ for $t\to\infty$  
whereas 
distances along stable fibers become $>2\varepsilon$ for $t\to - \infty$
(see Lemma \ref{lem:side-to-side}) 
show that the $u$-segment and the
$s$-segment of $g^{-t}\circ c$
have length 0. 
Therefore $g^{-t}\circ c$ and hence $c$ is an
orbit segment. This means that $c$ lies in
$A_t'$ and in $g^tA_t'$. Hence $p$ and $q$
lie in the same component, i.e.
$\phi_t^I=\phi_t^J$.
\end{proof}
In the other direction, we have the following Lemma:
\begin{lemma}\label{lem:comptoseg}
For every \( \phi_t^I\in\Phi_t \setminus \Phi_t^s\) 
there exists a unique periodic orbit with period in 
\( [t-\varepsilon ,\, t+\varepsilon ] \) 
and a unique geometric orbit segment on that orbit passing through $\phi_t^I$.
\end{lemma}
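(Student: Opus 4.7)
The plan is to carry out an Anosov-style closing-lemma argument on an unstable disc inside $A$ to produce and uniquely identify the desired periodic orbit. Pick any $v\in I$ and set $v':=g^{-t}v$. Since $I\subset A_t'\cap g^tA_t'$, both $v$ and $v'$ lie in $A_t'$, so in particular $s(v),s(v')\geq S(t)$ and $\tau(v),\tau(v')\in[\varepsilon^2,\varepsilon-\varepsilon^2]$. The extra hypothesis $\phi_t^I\notin\Phi_t^s$ will be used to keep the eventual fixed point away from the stable boundary of $A$, where the product structure loses control.

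I would first build a return-type map $\mathcal{P}$ on a small unstable disc $D\subset\overline{B^u_{2\delta}}(v)\cap A$ transverse to the flow through $v$. For $y\in D$, Lemma \ref{lem:side-to-side} applied to $v'$ yields $D\subset g^t\overline{B^u_{S(t)}}(v')$, so $g^{-t}y$ lies in a disc of unstable radius at most $S(t)$ about $v'$ --- hence well inside $A$ for large $t$. The local product structure of the flow cube (Proposition \ref{prop:reg coord}) applied to the pair $(g^{-t}y,y)\in A\times A$ then produces a unique $\beta(y)\in[-\varepsilon,\varepsilon]$ and a unique point $x(y)\in\overline{B^s}(g^{-t}y)\cap\overline{B^u_{2\delta}}(g^{\beta(y)}y)$. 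Define $\mathcal{P}(y)$ to be the point on $D$ obtained by sliding $x(y)$ along the stable-flow leaves back into the unstable slice of $v$.

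Next I would verify that $\mathcal{P}$ is a uniform contraction of $D$ into itself for large $t$. The unstable direction is squeezed by $g^{-t}$ to within $S(t)\to 0$, and this dominates all other error terms, while the stable and flow holonomies used to define $x$ and $\mathcal{P}$ are uniformly continuous on the regular set containing $A$ by Theorem \ref{thm:unif cts}. Banach's fixed point theorem then produces a unique $y^*\in D$ with $\mathcal{P}(y^*)=y^*$. Unpacking the definition, this identity says that $y^*$ is positively asymptotic to $g^{-t}y^*$ along the stable coordinate and related to $g^{\beta^*}y^*$ only along the unstable coordinate, where $\beta^*:=\beta(y^*)$. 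Because $A\subset\mathbf{Reg}$, the rank one dichotomy invoked in the uniqueness half of Lemma \ref{lem:segtocomp} forces these two alignments to collapse to $g^{L}y^*=y^*$ with $L:=t-\beta^*\in[t-\varepsilon,t+\varepsilon]$. The resulting periodic orbit has geometric orbit segment through $\phi_t^I$ by construction.

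For uniqueness, any second periodic orbit with period in $[t-\varepsilon,t+\varepsilon]$ whose segment passed through $\phi_t^I$ would produce a second fixed point of $\mathcal{P}$ in $D$, contradicting Banach's theorem; uniqueness of the geometric segment on a single orbit is then automatic because $2\varepsilon<\mathrm{inj}(M)$ prevents re-entry of the orbit into the flow window $[-\varepsilon,\varepsilon]$ around $y^*$. I expect the main technical obstacle to be the bookkeeping needed to check that $\mathcal{P}$ genuinely maps $D$ into itself and that the fixed point $y^*$ lies in $\phi_t^I$ rather than drifting to a neighboring component of $A_t'\cap g^tA_t'$; this is exactly the point where the assumption $\phi_t^I\notin\Phi_t^s$ is indispensable, since loss of control at the stable boundary would break both the self-map property and the component assignment.
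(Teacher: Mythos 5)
Your approach is a genuine rewrite of the argument: you try to build a contraction $\mathcal{P}$ on an unstable disc $D$ and invoke Banach's fixed point theorem, whereas the paper establishes differential hyperbolicity of $g^t$ on $\phi_t^I$ directly (stable Jacobi fields strictly decrease, unstable ones strictly increase, and compactness bootstraps this to a uniform bound $c<1$ on $|dg^{\pm t}\xi|/|\xi|$) and then invokes the hyperbolic fixed point theorem for the first return map, with $\phi_t^I\notin\Phi_t^s$ guaranteeing that stable fibers map strictly inside the box.

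The gap in your version is at the Banach step. After unwinding the product-structure bookkeeping, your $\mathcal{P}$ is essentially the $W^{0s}$-holonomy projection of $g^{-t}|_D$ back onto the slice $W^u(v)$. You argue that $\mathcal{P}$ is a strict contraction because $g^{-t}$ squeezes the unstable direction to within $S(t)\to 0$ and the stable/flow holonomies are ``uniformly continuous'' by Theorem~\ref{thm:unif cts}. Uniform continuity is not enough. Write $\lambda(t)<1$ for the (compactness-uniform) contraction rate of $g^{-t}$ on $E^u|_D$ and $\omega$ for the modulus of continuity of the $W^{0s}$-projection; then the best you obtain is $d(\mathcal{P}(y),\mathcal{P}(y'))\leq\omega\left(\lambda(t)\,d(y,y')\right)$. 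If $\omega(r)/r\to\infty$ as $r\to 0$ --- which happens already when the foliation is only H\"older, not Lipschitz --- then $\omega(\lambda(t)d)\leq\mu\,d$ fails for small $d$ no matter how small $\lambda(t)$ is, so $\mathcal{P}$ is not a contraction and Banach gives neither the unique fixed point nor your uniqueness claim downstream (Brouwer still gives existence, but not uniqueness). In the nonpositively curved rank-one setting the paper only cites continuity of the horospherical foliations (Theorem~\ref{thm:unif cts}) and H\"older regularity on Pesin sets, so you cannot assume a Lipschitz holonomy. The paper's proof sidesteps this entirely because it works with the \emph{differential} of the return map, a linear statement on tangent spaces where no Lipschitz estimate on the foliation is required. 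To repair your argument you would either have to prove a Lipschitz bound for the $W^{0s}$-holonomy on the compact regular block $A$, or replace Banach by a graph-transform / cone-field argument --- which is, in effect, what the paper does.

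A secondary, more cosmetic point: the periodicity of the fixed point does not really need the full rank-one dichotomy you invoke. From $\mathcal{P}(y^*)=y^*$ you get $g^{\sigma-t}y^*\in W^s(y^*)$ for some $\sigma\in[-\varepsilon,\varepsilon]$, and then $W^s(y^*)\cap W^0(y^*)=\{y^*\}$ follows because the Busemann function $b(\pi y^*,\pi g^\tau y^*,y^*_\infty)=\tau$ is strictly monotone along the orbit; this already forces $g^{\sigma-t}y^*=y^*$.
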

In other words, up to a small error, intersection components correspond to
periodic orbits, and of all orbit segments that belong to such a
periodic
orbit, just one orbit segment goes through any particular full component of
intersection. 
\begin{proof}
Choose  $\phi_t^I$. It suffices to consider the case
$t\geq 0$.
 Since $A_t'\subset A$ has rank one, it follows that
for every $v\in A_t'$ any nonzero stable 
Jacobi field along $c_v$ is strictly 
decreasing in length,
and any nonzero unstable Jacobi field is strictly increasing in length. 
Since the set
of stable (resp. unstable) Jacobi fields is linearly isomorphic to $E^s$
(resp. $E^u$) via $(d\pi,\mathcal{K})^{-1}$, it follows that for all $v\in
A'_t\cap g^tA'_t$:
$$|dg^t\xi|<|\xi|\quad \forall\xi\in E^s(v)\setminus\{0\},$$
$$|dg^{-t}\xi|<|\xi|\quad \forall\xi\in E^u(v)\setminus\{0\}.$$
By compactness of $A_t'$ and hence of $\phi_t^I$ there exists $c<1$ 
such that for all $v\in A'_t\cap g^tA'_t$:
$$|dg^t\xi|<c|\xi|\quad\forall\xi\in E^s(v)\setminus\{0\},$$
$$|dg^{-t}\xi|<c|\xi|\quad\forall\xi\in E^u(v)\setminus\{0\}.$$
Hence  $g^t$ 
restricted to $\phi_t^I$ is (apart from the flow direction) hyperbolic.
By the assumption that $\phi_t^I\not\in\Phi_t^s$, stable fibers are mapped to stable
fibers that do not intersect the stable end of the flow cube.
Thus the first return map on a transversal to the flow is hyperbolic. 
Hence it has a unique fixed point.

Therefore there exists a unique periodic orbit through $\phi_t^I$.  
Two geometrically different (hence disjoint) 
orbit segments would give rise to
two different fixed points.
Hence the geometric segment on the periodic
orbit is also unique.
\end{proof}
\subsection{Intersection thickness}
\begin{defn}
Define the \textbf{intersection thickness} (or intersection length)
 $\theta:\Phi_t\to[0,\varepsilon]$ 
by 
\[
\theta (\phi_t^I):=\varepsilon -
\sup\left\{\tau (v):v\in g^{t}\left(\bigcup _{w\in A,\,
g^{t}w\in I}g^{[-\varepsilon,\varepsilon]}w
\cap \partial _{0}A\right)\right\}\]
 for such \( \phi_t^I \) which intersect 
 \( \partial _{\varepsilon}A\)
(the front end of $A$) and 
\[
\theta (\phi_t^I):=\inf \left\{\tau (v):v\in g^{t}\left(\bigcup _{w\in A,\,
g^{t}w\in I}g^{[-\varepsilon,\varepsilon]}w
\cap \partial _{\varepsilon }A\right)\right\}\]
for such \( \phi_t^I \) which intersect 
\( \partial _{0}A \) (the back end of $A$).

\end{defn}
\begin{lemma}
[The average thickness is asymptotically half that of the flow box]
\label{lem:avgthickness}
 $$
\frac{1}{N(A,t)}\sum_{ \phi_t^I\in\Phi_t}
\theta(\phi_t^I)\cong\frac{\varepsilon}{2}.$$
In other words, 
\(\exists K<\infty\ \forall\alpha>0\ \exists \varepsilon_{0}>0
\ \forall \varepsilon\in(0,\varepsilon_{0}),\ A=A(\varepsilon)
\ \exists t_{0}>0\ \forall t>t_0:$
$\frac{1}{N(A,t)}\left|\ln ({2\sum_{ \phi_t^I\in\Phi_t}{\theta(\phi_t^I)}}/\varepsilon)
\right| 
<K\varepsilon+\alpha.$
\end{lemma}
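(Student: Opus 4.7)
My plan is to evaluate $\sum_I \theta(\phi_t^I)$ and $N(A,t)$ separately by applying mixing of the measure of maximal entropy $m$ (Babillot's theorem) to two different pairs of subsets of $A$, and then take the ratio.

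Step 1 (structure of components). By Lemmas \ref{lem:segtocomp} and \ref{lem:comptoseg}, each $\phi_t^I \in \Phi_t \setminus \Phi_t^s$ corresponds bijectively to a periodic orbit $o_I$ of period $L_I \in [t-\varepsilon, t+\varepsilon]$; set $s_I := t - L_I$. Parameterizing $o_I \cap A$ by depth $\tau$ and using that $g^t$ acts on $o_I$ as the rigid translation by $s_I$ modulo $L_I$, one sees that the $\tau$-projection of $\phi_t^I$ is the interval $R_I = [\max(0,s_I),\min(\varepsilon,\varepsilon+s_I)]$ of length $\varepsilon-|s_I|$; a direct tracking through the definition of $\theta$ (the sup or inf of $\tau$ on $g^t$ applied to back-face or front-face points of orbit pieces) recovers $\theta(\phi_t^I) = \varepsilon - |s_I|$. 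Theorems \ref{thm:unif-exp-of-cond} and \ref{thm:holonomyinvariance}, together with Lemma \ref{lem:sizeofstablefibers}, imply that all components have approximately the same transverse $m^u\times m^s$-measure; call this common value $T_0$, so that $m(\phi_t^I) \bowtie T_0 \cdot \theta(\phi_t^I)$.

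Step 2 (sum via mixing with $B=C=A$). Babillot's theorem gives $m(A\cap g^tA) \to m(A)^2$ as $t \to \infty$. The decomposition $m(A\cap g^tA) \cong T_0 \sum_I \theta(\phi_t^I)$ holds modulo two error terms which vanish in the limit: the $\Phi_t^s$-contribution (negligible because the expansion in Lemma \ref{lem:side-to-side} keeps most components in the interior of $A$) and the boundary strip $A \setminus A'_t$ (negligible since $\varepsilon^2 \ll \varepsilon$). Combining yields
\[ \sum_{\phi_t^I \in \Phi_t} \theta(\phi_t^I) \cong \frac{m(A)^2}{T_0}. \]

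Step 3 (count via mixing with thin slices). For $c,c' \in (0,\varepsilon)$ and $\gamma \in (0,\varepsilon)$, set $C_c := A \cap \{c<\tau<c+\gamma\}$. Mixing gives $m(C_c \cap g^t C_{c'}) \to m(C_c) m(C_{c'}) = (T\gamma)^2$ with $T := m(A)/\varepsilon$, while the decomposition gives $m(C_c \cap g^t C_{c'}) \cong T_0\,\gamma \cdot \#\{I : |s_I - (c-c')| < \gamma\}$. Varying $c-c'$ over $[-\varepsilon,\varepsilon]$ and letting $\gamma \to 0$, the counting measure of $\{s_I\}_I$ has uniform density $T^2/T_0$ on $[-\varepsilon,\varepsilon]$, whence
\[ N(A,t) \cong \frac{2\varepsilon T^2}{T_0} = \frac{2m(A)^2}{\varepsilon T_0}. \]
The ratio of this with the conclusion of Step 2 is $\varepsilon/2$, as required.

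The main obstacle is Step 1: establishing the $\bowtie$-uniformity of the transverse measures $T_I$ across components, and controlling the errors from stable-end components $\Phi_t^s$ and from the boundary strip $A \setminus A'_t$. The holonomy invariance and uniform contraction/expansion of the conditionals (Theorems \ref{thm:holonomyinvariance} and \ref{thm:unif-exp-of-cond}), together with Lemma \ref{lem:sizeofstablefibers}, supply the $\bowtie$-equivalence of the $T_I$; the noncontraction of unstable Jacobi fields in Lemma \ref{lem:side-to-side}, combined with the regularity assumption on $A$, ensures that $\Phi_t^s$ constitutes only an asymptotically negligible fraction of $\Phi_t$.
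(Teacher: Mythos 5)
Your proposal is correct in substance and rests on the same key input as the paper's proof --- mixing of the Knieper measure applied to thin depth slices of the flow box --- so the asymptotic equidistribution of the period offsets $s_I = t - L_I$ over $[-\varepsilon,\varepsilon]$ is the common core. The organization differs: the paper cuts $A$ into $n = \lfloor 1/\varepsilon\rfloor$ slices $A_i$, applies mixing to $m(A_i\cap g^t A_0)$ (and to $m(A_i\cap g^t A_{n-1})$), and concludes directly that the components are equidistributed over slices, hence that the mean of $\theta_I \approx \varepsilon - |s_I|$ is $\varepsilon/2$; you instead compute $\sum_I \theta_I$ via $m(A\cap g^tA)\to m(A)^2$ (Step~2) and $N(A,t)$ via mixing on thin slices (Step~3), then divide. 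Your Step~2 is actually redundant: once Step~3 delivers uniform density of $\{s_I\}$ on $[-\varepsilon,\varepsilon]$, the mean of $\varepsilon-|s_I|$ is immediately $\varepsilon/2$, and Step~2 essentially re-derives what the paper later proves as Proposition~\ref{prop:N(A,t)} (which, note, the paper derives \emph{from} this lemma). One bookkeeping point in Step~3 needs care: the per-component contribution to $m(C_c\cap g^t C_{c'})$ is $T_0\max(0,\gamma - |s_I-(c-c')|)$ --- a triangle kernel of total mass $\gamma^2$ --- not the rectangle $T_0\gamma\cdot\mathbf{1}\{|s_I-(c-c')|<\gamma\}$ you wrote, which would introduce a spurious factor of $2$ in $N$ and hence change the final ratio to $\varepsilon$; your stated conclusion (density $T^2/T_0$, ratio $\varepsilon/2$) is what the corrected triangle-kernel computation gives, so the end result is right but the intermediate formula as written is inconsistent with it.
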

\begin{proof}
Take any full component of intersection \( \phi_t^I\in\Phi_t. \) Assume
that it intersects 
the front end of $A$.
We cut $A$
along flow lines in 
\(
n:=\left\lfloor {1}/{\varepsilon }\right\rfloor \)
 pieces
\[A_{i}:=\left\{v\in A:\tau (v)\in \left[\frac{i\varepsilon}{n},
\frac{(i+1)\varepsilon
}{n}\right)\right\}\]
 of equal measure 
 (\( i=0,\dots ,n-1 \)). By the mixing property, \( m(A_{i}\cap g^{t}A_{0}) \)
is asymptotically
independent of \( i \) as \( t\rightarrow \infty . \) Hence the number of full
components of intersection of $A_i$ with
 $g^tA_0$ is asymptotically independent
of \( i. \) Since any
 intersection component of \( A_{i}\cap g^{t}A_{0} \) 
 has depth
\( \tau  \) with 
\(
|\tau -i\varepsilon /n|<\varepsilon /n,\)
 we see that the average 
 of $\theta$
 is \( \varepsilon /2 \) up to an error of order \( \varepsilon ^{2}. \)

The same reasoning applies if \( A_{0} \) is changed to \( A_{n-1} \), hence
 for $  \phi_t^I $ intersecting 
 the back end of $A$ instead of the front end.
\end{proof}
Note that if we compute the measure of an intersection $A_0\cap
g^tA_{n-1}$ for $t$ large, 
the terms which are not in full components of intersection
contribute only a fraction which by mixing is asymptotically zero because
$m(A_t')\cong m(A)$,
i.e., 
\(\exists K<\infty\ \forall\alpha>0\ \exists \varepsilon_{0}>0
\ \forall \varepsilon\in(0,\varepsilon_{0}),\ A=A(\varepsilon)
\ \exists t_{0}>0\ \forall t>t_0:
\left|\ln (m(A_t')  / m(A) )\right| 
<K\varepsilon+\alpha\).
This follows from
$$m(\{v\in A:s(v)<S(t)\})\to 0\text{ as }t\to\infty$$ and
$$m(\{v\in A:\tau(v)\in[0,\varepsilon^2]
\cup[\varepsilon-\varepsilon^2,\varepsilon]\})= 2\varepsilon m(A).$$
\subsection{Counting intersections}
\begin{thm}[Few intersection components through the stable
end]\label{thm:stable-boundary-errors-are-small}
The number $\#\Phi^s_t$ of intersection components that touch the stable end 
$\partial^s A$
 is asymptotically a zero proportion of the number of all boundary components: 
 $$\frac{\#\Phi^s_t}{N(A,t)}\cong 0.$$
In other words,
 \(\exists K<\infty\ \forall\alpha>0\ \exists \varepsilon_{0}>0
\ \forall \varepsilon\in(0,\varepsilon_{0}), A=A(\varepsilon)
\ \exists t_{0}>0\ \forall t>t_0:
 \frac{\#\Phi^s_t}{N(A,t)}<K\varepsilon+\alpha.\)
\end{thm}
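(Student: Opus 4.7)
The plan is to show first that every component in $\Phi_t^s$ is confined to a thin neighborhood $C_{\rho(t)}$ of the stable end $\partial^sA$ whose stable thickness $\rho(t)$ tends to $0$ as $t\to\infty$, and then to bound $\#\Phi_t^s$ by a mixing argument on this collar, in the same spirit as in the proof of Lemma \ref{lem:avgthickness}.

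First I would establish the stable analog of Lemma \ref{lem:side-to-side}. By nonpositive curvature, stable Jacobi fields along regular geodesics are noncontracting in reverse time, and by convexity and rank one they cannot stay bounded in reverse time either. Combined with compactness of the regular set $A$, the argument mirroring the proof of Lemma \ref{lem:side-to-side} produces a monotone function $\rho:[0,\infty)\to(0,\infty)$ with $\rho(t)\to 0$ such that
\[
g^t\bigl(\overline{B^s_{\delta}}(w)\bigr)\subset\overline{B^s_{\rho(t)}}(g^tw)\qquad\text{for every }w\in A\text{ with }g^tw\in A.
\]
Every component $\phi_t^I\in\Phi_t$ is contained in a single $g^t$-image of a stable fiber of $A'_t$, so its stable extent inside $A$ is at most $\rho(t)$. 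Hence every $\phi_t^I\in\Phi_t^s$ lies in the collar $C_{\rho(t)}$ of $\partial^sA$ consisting of points at stable distance at most $\rho(t)$ from $\partial^sA$.

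Next I would partition $A$ along the stable direction into $n_s:=\lfloor\delta/\rho(t)\rfloor$ slabs $A^{(1)},\dots,A^{(n_s)}$ of equal $m$-measure and stable thickness of order $\rho(t)$, arranged so that $C_{\rho(t)}$ is covered by a uniformly bounded number of outermost slabs. By mixing (Babillot's theorem) the measure $m(A^{(j)}\cap g^tA^{(k)})$ is asymptotically independent of $(j,k)$; arguing exactly as in the proof of Lemma \ref{lem:avgthickness}, the same asymptotic independence then holds for the number of full intersection components. Summing over the uniformly bounded number of outermost slabs yields
\[
\#\Phi_t^s\ \leq\ O\!\left(\tfrac{\rho(t)}{\delta}\right)N(A,t),
\]
and since $\rho(t)\to 0$ this forces $\#\Phi_t^s/N(A,t)\cong 0$.

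The main obstacle is the legitimate passage from mixing, which is a statement about $m$-measure, to counts of intersection components. As in the proof of Lemma \ref{lem:avgthickness}, this requires that each full component of intersection contained in a given slab carries the same $m$-mass up to $\bowtie$-equivalence. This in turn rests on the uniform expansion of $m^u$ and uniform contraction of $m^s$ (Theorem \ref{thm:unif-exp-of-cond}), the local product structure on the regular set $A$ (Proposition \ref{prop:reg coord}), the uniform estimate $|\tau(w)-\tau(v)|<\varepsilon^2/2$ across $u$-fibers, and the observation $m(A'_t)\cong m(A)$ made after Lemma \ref{lem:avgthickness}. Once this uniformity is in place, component counts inherit the proportions of the $m$-measures, closing the argument.
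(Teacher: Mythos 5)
Your proposal diverges from the paper's proof at the two key junctures, and the second one contains a genuine gap. For the first step, you assert a stable mirror of Lemma \ref{lem:side-to-side}: that $g^t(\overline{B^s_\delta}(w))\subset\overline{B^s_{\rho(t)}}(g^tw)$ with a single $\rho(t)\to 0$ uniform over all $w\in A$ with $g^tw\in A$. This is exactly the statement the Remark following the paper's proof singles out as the source of difficulty: stable distances in nonpositive curvature do not contract uniformly and need not even converge to zero. A compactness argument over the compact regular set $A$ (growing stable Jacobi fields in reverse time, Dini-type uniformization) can in fact be pushed through in your restricted setting, but it is not a routine mirroring — it is the delicacy the Remark is pointing at, and the paper deliberately avoids it by citing a weaker ``pointwise except at the boundary'' convergence from \cite{Gun} about $0u$-balls under stable holonomy. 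At minimum you owe a careful argument here rather than an appeal to symmetry, because the naive symmetry is exactly what fails.

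The second step is a clear gap. You partition $A$ into $n_s=\lfloor\delta/\rho(t)\rfloor$ stable slabs whose number and thickness depend on $t$, and then invoke mixing to claim $m(A^{(j)}\cap g^tA^{(k)})$ is asymptotically independent of $(j,k)$. Mixing (Babillot) gives $m(B\cap g^tC)\to m(B)m(C)$ for fixed Borel sets $B,C$, with no uniform rate over families of sets; here the sets themselves shrink as $t\to\infty$, so the needed uniform mixing statement is simply not available. Compare the partition in the proof of Lemma \ref{lem:avgthickness}, where the number of slabs $\lfloor 1/\varepsilon\rfloor$ depends only on $\varepsilon$, not on $t$ — that is what makes mixing usable there. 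Moreover the passage from slab measures to slab counts of components is not automatic in the stable direction: a component can straddle a slab boundary, and the $m^s$-measure carried by a component near $\partial^sA$ is precisely what needs to be controlled. The paper sidesteps both problems by a direct measure estimate: it confines $\Phi_t^s$ to $B^s_{D^s(t)}(\partial^sA)$, extends each boundary component to $\widehat\phi_t^I$ so that the $\bowtie$-estimate $m(\widehat\phi_t^I)\bowtie \theta(\widehat\phi_t^I)m^{0u}(F)\sigma(t)/\varepsilon$ applies, and then compares the collar measure (which tends to zero, by the product structure of $m$) against $N(A,t)\cong 2e^{ht}m(A)$. That route needs no $t$-dependent partition and no quantitative stable contraction rate, which is precisely why it is the one that works.
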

\begin{proof}
Let $F:=g^{[0,\varepsilon]}\overline{B^u_{\delta}}(v_0).$
First note that 
\[
m(\phi_t^I)\bowtie \frac{\theta (\phi_t^I)}{\varepsilon }m^{0u}(F)\sigma (t)
\]
 for  \( \phi_t^I\in\Phi_t \setminus\Phi_t^s\),
i.e., $\exists K,\varepsilon_0>0
\ \forall \varepsilon\in(0,\varepsilon_0),\ A=A(\varepsilon)
\ \exists t_0\ \forall t>t_0: $
\[
\left|\ln
\frac{\varepsilon m(\phi_t^I)}
{\theta (\phi_t^I)m^{0u}(F)\sigma (t)}
\right|
<K\varepsilon.
\]
This is so since by Lemma \ref{lem:sizeofstablefibers}
  the stable measure of the pieces of
stable fibers in \( \phi_t^I \) is equal to \( \sigma (t) \) up to an error term 
that converges to 0 as \( \varepsilon \to 0 \)
and since 
by holonomy invariance (Theorem \ref{thm:holonomyinvariance})
 and by Lemma \ref{lem:side-to-side}
 the \( m^{0u} \)-measure of $0u$-leaves of \( \phi_t^I \) is the same as that
of \( F \), except that
the thickness of the intersection is not 
\( \varepsilon  \) but \( \theta (\phi_t^I). \)

For $s$-holonomic $p,q$, i.e. $p\in W^s(q)$,
the bounded subsets $B^{0u}_r(p)$ and $B^{0u}_r(q)$
get arbitrarily close under the flow ``pointwise except at the boundary'' 
in the following sense: there exists 
$R_1:= d^s(p,q)$ with $d\left(g^t p,B_{R_1}^{0u}(g^t q)\right)\to 0$ as $t\to\infty$. 
Moreover,
if we write $H$ for the holonomy
map from $B^{0u}_r(p)$ to $B^{0u}_r(q)$ along stable fibers, then
 for $R_2>R_1$ the convergence 
$d\left(g^t p',B_{R_1}^{0u}(g^t H(p'))\right)\to 0$ as $t\to\infty$ is 
uniform in $p'$ for all $p'\in B_{R_2-R_1}^{0u}(p)$.
See \cite{Gun}  for a proof of these claims.
Thus there exists $D^s=D^s(t):[0,\infty)\to (0,\infty)$ with
$D^s(t)\to 0$ as $t\to\infty$ such that 
$\phi\in\Phi_t^s$ implies $\phi\subset B^s_{D^s(t)}\left(\partial^sA\right)$.

Existence of a decomposition of $m$ into conditionals (Proposition \ref{prop:condKn})
and their holonomy invariance (Theorem \ref{thm:holonomyinvariance})
imply that
$m\left(B^s_{D}(\partial^sA)\right)\to 0$ as $D\to 0$.

For $\phi_t^I\in\Phi_t^s$ define 
$$\widehat{\phi}_t^I{}:=
g^{[-\varepsilon,\varepsilon]}(I)\cap B^s_{D^s(t)}(A)\cap g^tA.$$
This differs from $\phi_t^I$ by extending it in the stable direction 
beyond the stable boundary of $A$. 
We could also have written $\widehat{\phi}_t^I{}=
g^{[-\varepsilon,\varepsilon]}(I)\cap B^s_{D^s(t)}\left(\partial^s A\right)\cap g^tA.$
The set
$\widehat{\phi}_t^I$ is the intersection of $g^tA$ not only with $A$ itself,
but with a stable neighborhood of $A$; this allows us to treat 
$\widehat{\phi}_t^I\in\Phi_t^s$ like the elements ${\phi}_t^I\in\Phi_t$.
Namely, for such $\phi_t^I\in\Phi_t^s$, the formula
\(
m\left(\widehat\phi_t^I\right)\bowtie 
{\theta \left(\widehat\phi_t^I\right)}m^{0u}(F)\sigma (t)/{\varepsilon }
\)
still holds, by the same argument as in the case of $\phi_t^I\in\Phi_t$.
Since 
$\theta \left(\widehat{\phi_t^I}\right)
\leq
\theta \left(\phi_t^I\right)+\varepsilon^2$
and
$\theta \left(\phi_t^I\right)/\varepsilon
\leq 1$, this shows that 
\(
m\left(\widehat\phi_t^I\right)\leq \text{const}\cdot e^{-ht}.
\)
Therefore 
$$
\#\Phi_t^s/\#\Phi_t \leq \text{const}\cdot
m\left(B^s_{D^s(t)}\left(\partial^sA\right)\right)\to 0 
\text{ as }t\to\infty,
$$ proving the claim.
\end{proof}
\begin{remark}
The proof of Theorem \ref{thm:stable-boundary-errors-are-small} would be much
shorter and very easy 
if distances in the stable direction would contract uniformly, as they do for
uniformly hyperbolic systems. But in our case they do not. In fact, they do not
necessarily even convege to zero.
\end{remark}
\begin{prop}
\label{prop:N(A,t)}
The number 
$N(A,t)$ 
satisfies
\[
N(A,t)\cong 2e^{ht}m(A).\]
In other words,
 \(\exists K<\infty\ \forall\alpha>0\ \exists \varepsilon_{0}>0
\ \forall \varepsilon\in(0,\varepsilon_{0}),\ A=A(\varepsilon)
\ \exists t_{0}>0\ \forall t>t_0:
\left|\ln (N(A,t)/2e^{ht}m(A))\right| 
<K\varepsilon+\alpha.\)
\end{prop}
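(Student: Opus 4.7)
The plan is to compute $m(A\cap g^tA)$ in two independent ways as $t\to\infty$ and equate the two expressions. On one hand, mixing of the measure of maximal entropy (Babillot) gives $m(A\cap g^tA)\to m(A)^2$, so in particular $m(A\cap g^tA)\sim m(A)^2$. The same asymptotic remains valid after replacing $A\cap g^tA$ by the slightly smaller set $A'_t\cap g^tA'_t$, since $m(\{v\in A:s(v)<S(t)\})\to 0$ and $m(\{v\in A:\tau(v)\notin[\varepsilon^2,\varepsilon-\varepsilon^2]\})=2\varepsilon\,m(A)$ together imply $m(A'_t\cap g^tA'_t)\cong m(A\cap g^tA)$, an observation already recorded in the paragraph following Lemma \ref{lem:avgthickness}.

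For the second computation I would decompose $A\cap g^tA$ into the union of the full components of intersection $\phi_t^I\in\Phi_t$, plus the negligible boundary pieces just mentioned. From the proof of Theorem \ref{thm:stable-boundary-errors-are-small} we have the estimate
\[
m(\phi_t^I)\bowtie\frac{\theta(\phi_t^I)}{\varepsilon}\,m^{0u}(F)\,\sigma(t)
\]
for every $\phi_t^I\in\Phi_t\setminus\Phi_t^s$, with $\bowtie$-constant uniform over components thanks to Lemma \ref{lem:sizeofstablefibers}, while the contribution of the stable-boundary components $\Phi_t^s$ is absorbed into the $\cong$-error by the same theorem. Summing over $\phi_t^I\in\Phi_t$ and applying Lemma \ref{lem:avgthickness} to replace the average thickness $\frac{1}{N(A,t)}\sum_I\theta(\phi_t^I)$ by $\varepsilon/2$ yields
\[
m(A\cap g^tA)\cong\tfrac{1}{2}\,N(A,t)\,m^{0u}(F)\,\sigma(t).
\]

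To match the two computations, it remains to express $m^{0u}(F)\sigma(t)$ in terms of $m(A)$. The local product structure of $m$ furnished by the conditional decomposition (Proposition \ref{prop:condKn}) together with holonomy invariance of the conditionals (Theorem \ref{thm:holonomyinvariance}) gives $m(A)\bowtie\sigma(0)\,m^{0u}(F)$, while uniform contraction of the stable conditional (Theorem \ref{thm:unif-exp-of-cond}, cf.\ Remark \ref{rem:sigma asym}) gives $\sigma(t)=e^{-ht}\sigma(0)$. Substituting $m^{0u}(F)\sigma(t)\bowtie e^{-ht}m(A)$ and equating with the mixing expression yields $m(A)^2\cong\tfrac{1}{2}\,N(A,t)\,e^{-ht}\,m(A)$, and hence $N(A,t)\cong 2e^{ht}m(A)$ as claimed.

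The main obstacle I expect is the careful uniform bookkeeping of error terms rather than any single computation: one must verify that the $\bowtie$-constants in the estimate for $m(\phi_t^I)$ are uniform in $I$, that the part of $A\cap g^tA$ lying outside $\bigcup_I\phi_t^I$ is negligible in the $\cong$-sense, and that the $\alpha$-errors from mixing together with the $\varepsilon$-errors from Lemmas \ref{lem:avgthickness} and \ref{lem:sizeofstablefibers} and from Theorem \ref{thm:stable-boundary-errors-are-small} combine into a single $\cong$-relation with constants uniform in $\varepsilon$.
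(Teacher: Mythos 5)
Your proposal is correct and takes essentially the same route as the paper's proof: both compute $m(A\cap g^tA)$ once by decomposing into full intersection components (using the estimate $m(\phi_t^I)\bowtie\frac{\theta(\phi_t^I)}{\varepsilon}m^{0u}(F)\sigma(t)$ and Lemma \ref{lem:avgthickness}) and once via mixing, then equate. The only difference is expository: the paper compresses the conversion of $m(A)^2$ into $e^{ht}m(A)\sigma(t)m^{0u}(F)$ into a single ``by mixing'' line, whereas you spell out the intermediate facts $m(A)\bowtie\sigma(0)m^{0u}(F)$ and $\sigma(t)=e^{-ht}\sigma(0)$ explicitly.
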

\begin{proof}
As in the proof of Theorem \ref{thm:stable-boundary-errors-are-small}, writing
 $F:=g^{[0,\varepsilon]}\overline{B^u_{\delta}}(v_0)$
 gives the estimate 
\[
m\left(\phi_t^I\right)\bowtie \frac{\theta \left(\phi_t^I\right)}{\varepsilon }m^{0u}(F)\sigma (t).
\]

Since by Lemma \ref{lem:avgthickness}
the average of the \( \theta (\phi_t^I)\, 
\textrm{is asymptotically }\varepsilon /2, \)
we get
\[
\frac{1}{N(A,t)}\sum_{\phi_t^I\in\Phi_t}
m\left(\phi_t^I\right)\cong \frac{1}{2}\sigma (t)m^{0u}(F),
\]
i.e., $\exists K<\infty\ \forall \alpha>0\ \exists\varepsilon_0>0
\ \forall \varepsilon\in(0,\varepsilon_0),\ A=A(\varepsilon)
\ \exists t_0\ \forall t>t_0: $
\[
\left|\ln
\frac{2\sum_{\phi_t^I\in\Phi_t}m\left(\phi_t^I\right)}
{N(A,t)\sigma (t)m^{0u}(F)}
\right|
<K\varepsilon+\alpha.
\]

Since the measure of \( A\cap g^{t}A \) is 
asymptotically the sum of the measures of the
full components
of intersection (there are \( N(A,t) \) of those), we obtain
\begin{equation}\label{eqnNsm}
m(A\cap g^{t}A)\cong \frac{1}{2}N(A,t)\sigma (t)m^{0u}(F),
\end{equation}
or, in more detail, $\exists K<\infty\ \forall \alpha>0\ \exists\varepsilon_0>0
\ \forall \varepsilon\in(0,\varepsilon_0),\ A=A(\varepsilon)
\ \exists t_0\ \forall t>t_0: 
\left|\ln
({2m(A\cap g^{t}A)}
/{N(A,t)\sigma (t)m^{0u}(F)})
\right|
<K\varepsilon+\alpha.$

Moreover note that by the mixing property of \( g \),
\[
m(A\cap g^{t}A)\cong e^{ht}m(A)\sigma (t)m^{0u}(F),
\]
i.e., $\exists K<\infty\ \forall \alpha>0\ \exists\varepsilon_0>0
\ \forall \varepsilon\in(0,\varepsilon_0),\ A=A(\varepsilon)
\ \exists t_0\ \forall t>t_0: 
\left|\ln
({m(A\cap g^{t}A)}
/{e^{ht}\sigma (t)m(A)m^{0u}(F)})
\right|
<K\varepsilon+\alpha.$

 The claim follows from combining these two estimates. 
\end{proof}
\subsection{Bounds on error terms in intersection counting}
\begin{defn}\label{def:G(t,e)}
Define ${\bf G}(t,\varepsilon):=\{$all geometric orbit
segments in $A$ of periodic orbits 
with period in $[t-\varepsilon,t+\varepsilon]\}$.
Let $G(t,\varepsilon):=\#{\bf
G}(t,\varepsilon)$. 
For
better comparison we write $N(t,\varepsilon):=N(A,t)=\#\Phi_t(\varepsilon)$ 
where $\Phi_t=\Phi_t(\varepsilon)$ is as before the
set of all full intersection components for given $t,\varepsilon.$ 
\end{defn}
\begin{prop}\label{prop:segs-asym-comps}
The number of orbit segments passing through $A$ that belong to periodic orbits
with period
in $[t-\varepsilon,t+\varepsilon]$ is $\cong N(A,t)$.
I.e., $$N(t,\varepsilon)\cong G(t,\varepsilon),$$ or more formally:
$\exists K<\infty\ \forall \alpha>0\ \exists\varepsilon_0>0
\ \forall \varepsilon\in(0,\varepsilon_0),\ A=A(\varepsilon)
\ \exists t_0\ \forall t>t_0: $
\[
\left|\ln\frac
{G(t,\varepsilon)}
{N(t,\varepsilon)}
\right|
<K\varepsilon+\alpha.
\]
\end{prop}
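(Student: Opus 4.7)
The plan is to compare $\Phi_t$ and ${\bf G}(t,\varepsilon)$ via the two maps supplied by Lemmas~\ref{lem:segtocomp} and \ref{lem:comptoseg}, and to absorb the remaining discrepancy using Theorem~\ref{thm:stable-boundary-errors-are-small} together with a small edge-effect estimate. I first pass to the auxiliary set ${\bf G}'(t)$ of orbit segments of length $\varepsilon-2\varepsilon^2$ lying in $A'_t$ whose underlying periodic orbit has period in $[t-\varepsilon+2\varepsilon^2,t+\varepsilon-2\varepsilon^2]$. Lemma~\ref{lem:segtocomp} provides a well-defined map $\alpha\colon{\bf G}'(t)\to\Phi_t$, and Lemma~\ref{lem:comptoseg} provides $\beta\colon\Phi_t\setminus\Phi_t^s\to{\bf G}'(t)$ satisfying $\alpha\circ\beta=\mathrm{id}$. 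In particular $\beta$ is injective, giving $\#(\Phi_t\setminus\Phi_t^s)\le|{\bf G}'(t)|$, which combined with the bound $\#\Phi_t^s\le(K\varepsilon+\alpha)N(t,\varepsilon)$ from Theorem~\ref{thm:stable-boundary-errors-are-small} yields the half-bound $(1-K\varepsilon-\alpha)N(t,\varepsilon)\le|{\bf G}'(t)|$.

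For the reverse bound I would show that $\alpha$ is essentially injective on all of $\Phi_t$. On each $\phi\in\Phi_t$ the first-return map of the flow to a small transversal is hyperbolic with uniform contraction on $E^s$ and uniform expansion on $E^u$; this is exactly the argument appearing in the proof of Lemma~\ref{lem:comptoseg}, and it depends only on compactness of $A$ and on $A$ being of rank one, not on $\phi\notin\Phi_t^s$. A hyperbolic map has at most one fixed point on a single connected piece of its domain, so at most one periodic orbit of period approximately $t$ can pass through $\phi$, giving $|\alpha^{-1}(\phi)|\le1$ and hence $|{\bf G}'(t)|\le N(t,\varepsilon)$.

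It then remains to bridge ${\bf G}'(t)$ and ${\bf G}(t,\varepsilon)$. The symmetric difference consists of orbit segments in $A\setminus A'_t$ (within $S(t)$ of the unstable end or within $\varepsilon^2$ of the back or front end) together with segments whose orbit has period in the thin collars $[t-\varepsilon,t-\varepsilon+2\varepsilon^2]\cup[t+\varepsilon-2\varepsilon^2,t+\varepsilon]$. By the equidistribution of periodic orbits with respect to $m$, together with $m(A'_t)\cong m(A)$ (observed in the remark after Lemma~\ref{lem:avgthickness}), these exceptional sets have relative cardinality of order $O(\varepsilon)$, which fits within the $K\varepsilon$-tolerance of $\cong$. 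Combining the three estimates gives $G(t,\varepsilon)\cong N(t,\varepsilon)$. The main obstacle is the extension of the hyperbolic fixed-point argument of Lemma~\ref{lem:comptoseg} to components in $\Phi_t^s$, where the first-return map fails to be a self-map because stable fibers can exit through $\partial^s A$; uniqueness of a fixed point nevertheless survives, since hyperbolicity on the actual domain of return is unaffected. Should this step prove technically delicate, one can instead forgo injectivity of $\alpha$ on $\Phi_t^s$ and bound $|\alpha^{-1}(\Phi_t^s)|$ by a uniform constant times $\#\Phi_t^s$, which is again absorbed into the error by Theorem~\ref{thm:stable-boundary-errors-are-small}.
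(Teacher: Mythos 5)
Your proposal follows the paper in reusing Lemmas~\ref{lem:segtocomp} and~\ref{lem:comptoseg} to set up a correspondence between orbit segments and full intersection components, and your first half-bound via $\beta$ together with Theorem~\ref{thm:stable-boundary-errors-are-small} is sound. (One small slip: Lemma~\ref{lem:comptoseg} produces a segment of length $\varepsilon$ in $A$ with period in $[t-\varepsilon,t+\varepsilon]$, i.e.\ lands in ${\bf G}(t,\varepsilon)$ rather than ${\bf G}'(t)$; this matters for what follows.)

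The genuine gap is in your last step, the passage from ${\bf G}'(t)$ to ${\bf G}(t,\varepsilon)$. You must bound the number of orbit segments whose underlying period lies in the thin collars $[t-\varepsilon,t-\varepsilon+2\varepsilon^2]\cup[t+\varepsilon-2\varepsilon^2,t+\varepsilon]$, and you invoke ``equidistribution of periodic orbits with respect to $m$'' together with $m(A_t')\cong m(A)$. But Theorem~\ref{thm:m asym mu} only controls the spatial distribution of periodic orbits: it says nothing a priori about how periods are distributed, and the quantity you need to control, namely $G(t,\varepsilon)-G(t,\varepsilon-2\varepsilon^2)$, is exactly a count of orbits by period. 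Asserting that this is $O(\varepsilon)\cdot G(t,\varepsilon)$ is essentially the statement that $G(t,\cdot)$ is Lipschitz-stable in the collar width, which is a close cousin of the proposition you are proving; using equidistribution for it is circular. The paper handles precisely this by a different device: from the injections it extracts the sandwich $G(t,\varepsilon-2\varepsilon^2)\le N(t,\varepsilon)\le G(t,\varepsilon)$ and then reduces everything to the stability estimate $N(t,\varepsilon)\cong N(t,\varepsilon+\varepsilon^2)$, which it proves directly from mixing: it slices $A$ into $n=\lfloor 1/\varepsilon\rfloor$ layers $A_i$, uses Babillot's mixing theorem to estimate $m(A_0\cap g^tA_{n-1})$, and then bounds the number of newly created full components (when $t$ or $\varepsilon$ is nudged by $\varepsilon^2$) by comparing the total measure they carry, via the $\sigma(t)$-fiber estimates, against the known total $m(A\cap g^tA)$. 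That measure-budget argument is what you are missing; without it the collar contribution is uncontrolled.

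Your idea of extending injectivity of $\alpha$ to all of $\Phi_t$ (not just $\Phi_t\setminus\Phi_t^s$) by running the hyperbolic-return argument on the actual domain of definition is a nice observation, and it would let you drop the upper-half appeal to Theorem~\ref{thm:stable-boundary-errors-are-small}; but it does not touch the real difficulty, which is the period-collar bound described above. To repair the proof, replace your final bridge with the paper's mixing-and-measure argument showing $N(t,\varepsilon)\cong N(t,\varepsilon+\varepsilon^2)$, and then the sandwich inequalities close the argument.
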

\begin{proof} 
By Lemma \ref{lem:segtocomp} we have
a map ${\bf G}(t,\varepsilon-2\varepsilon^2)\to\Phi_t(\varepsilon)$ 
and by Lemma \ref{lem:comptoseg} a map
$\Phi_t(\varepsilon)\setminus\Phi_t^s(\varepsilon)\to{\bf G}(t,\varepsilon)$. 
These maps are invertible between their domains
and images; hence they are injective. Thus we have 
$$G(t,\varepsilon-2\varepsilon^2)\leq
N(t,\varepsilon)\leq G(t,\varepsilon).$$

Since $N(t,\varepsilon-2\varepsilon^2)\leq 
G(t,\varepsilon-2\varepsilon^2),$ it suffices to show 
$$
N(t,\varepsilon)\cong N(t,\varepsilon+\varepsilon^2),
$$
i.e., $\exists K<\infty\ \forall \alpha>0\ \exists\varepsilon_0>0
\ \forall \varepsilon\in(0,\varepsilon_0),\ A=A(\varepsilon)
\ \exists t_0\ \forall t>t_0: $
\[
\left|\ln\frac
{N(t,\varepsilon)}
{N(t,\varepsilon+\varepsilon^2)}
\right|
<K\varepsilon+\alpha.
\]

Partition $A$ again into
\( n:=\left\lfloor {1}/{\varepsilon }\right\rfloor \)
 pieces 
\[A_{i}:=\left\{v\in A:\tau (v)\in \left[{i\varepsilon}/{n},{(i+1)\varepsilon
}/{n}\right)\right\}\] of equal measure
(\( i=0,\dots ,n-1 \)).
Mixing
implies that 
$$
m(A_0\cap g^tA_{n-1})\cong \varepsilon^2m(A)^2,
$$
i.e., $\exists K<\infty\ \forall \alpha>0\ \exists\varepsilon_0>0
\ \forall \varepsilon\in(0,\varepsilon_0),\ A=A(\varepsilon)
\ \exists t_0\ \forall t>t_0: $
\(
\left|\ln(
{m(A_0\cap g^tA_{n-1})}
/{\varepsilon^2m(A)^2}
)\right|
<K\varepsilon+\alpha.
\)

Observe that 
in analogy to equation (\ref{eqnNsm}) we have
$$
\varepsilon^2m(A)^2\cong m(A_0\cap
g^{t+\varepsilon^2}A_{n-1})\cong
\frac{1}{2}\varepsilon^2N(t,\varepsilon)m^{0u}(F)\sigma(t),
$$
i.e., $\exists K<\infty\ \forall \alpha>0\ \exists\varepsilon_0>0
\ \forall \varepsilon\in(0,\varepsilon_0),\ A=A(\varepsilon)
\ \exists t_0\ \forall t>t_0: 
\left|\ln
(
2m(A)^2
/
N(t,\varepsilon)m^{0u}(F)\sigma(t)
)
\right|
<K\varepsilon+\alpha.$

The full components of $A_0\cap g^{t+\varepsilon^2}A_{n-1}$ 
which are newly
created at the back end of $A$ by increasing $t$ to $t+\varepsilon^2$ have
average thickness $\varepsilon^2/2$ and hence
average measure $\frac{1}{2}\varepsilon m^{0u}(F)\sigma(t).$ 
Hence this increase of
$t$ to $t+\varepsilon^2$ can produce at most $\cong\varepsilon N(t,\varepsilon)$ such
full components. Thus 
$$N(t+\varepsilon^2,\varepsilon)\stackrel{\sim}{\leq} 
N(t,\varepsilon)+\varepsilon
N(t,\varepsilon),$$
where the notation
$f_1(t,\varepsilon)\stackrel{\sim}{\leq}f_2(t,\varepsilon)$ means 
$f_1(t,\varepsilon){\leq}f_3(t,\varepsilon){\cong}f_2(t,\varepsilon)$ 
for some $f_3$, i.e.,
$\exists K<\infty\ \forall \alpha>0\ \exists\varepsilon_0>0
\ \forall \varepsilon\in(0,\varepsilon_0),\ A=A(\varepsilon)
\ \exists t_0\ \forall t>t_0: $
$$\ln
\frac{
f_1(t,\varepsilon)
}{
f_2(t,\varepsilon)
}
<K\varepsilon+\alpha.$$
In other words, we have shown 
$\exists K<\infty\ \forall \alpha>0\ \exists\varepsilon_0>0
\ \forall \varepsilon\in(0,\varepsilon_0),\ A=A(\varepsilon)
\ \exists t_0\ \forall t>t_0: $
$$\ln
\frac{
N(t+\varepsilon^2,\varepsilon)
}{
N(t,\varepsilon)+\varepsilon
N(t,\varepsilon)
}
<K\varepsilon+\alpha.$$

 It follows that $$N(t+\varepsilon^2,\varepsilon){\cong} N(t,\varepsilon).$$

Since increasing $\varepsilon$ by $\varepsilon^2$ 
leads to a gain in the number of full
components by making more of them enter the back end of the flow 
cube exactly like
increasing $t$ by  $\varepsilon^2$ does, plus a similar increase in number by
making some of them delay their departure through the front end of the
flow cube, we get 
$$N(t,\varepsilon+\varepsilon^2)\stackrel{\sim}
{\leq} N(t,\varepsilon)+2\varepsilon 
N(t,\varepsilon),$$ 
i.e., we have shown 
$\exists K<\infty\ \forall \alpha>0\ \exists\varepsilon_0>0
\ \forall \varepsilon\in(0,\varepsilon_0),\ A=A(\varepsilon)
\ \exists t_0\ \forall t>t_0: $
$$\ln
\frac{
N(t,\varepsilon+\varepsilon^2)
}{
N(t,\varepsilon)+2\varepsilon 
N(t,\varepsilon)
}
<K\varepsilon+\alpha.$$

This shows that 
$N(t,\varepsilon+\varepsilon^2){\cong} N(t,\varepsilon).$ 
Hence $N(t,\varepsilon)\cong G(t,\varepsilon)$ as claimed.
\end{proof}
\subsection{A Bowen-type property of the measure of maximal entropy}
\begin{defn}
Let \( P_{t} \) be the number of homotopy classes of closed geod\-esics of length
at most \( t. \) Let \( P_{t}(A) \) be the number of closed
geodes\-ics of length at most \( t \) that intersect \( A. \) Let \( P_{t}' \)
be the number of \emph{regular} closed geodesics of length at most \( t. \) 
\end{defn}
\begin{remark}[Terminology] When we say ``closed geodesic'', we mean 
``periodic orbit  for the geodesic
flow'', i.e. with parameterization (although
always by arclength and modulo adding a constant to the parameter). 
Thus a locally shortest curve is counted as two geodesics
(i.e. periodic orbits for the geodesic flow), namely one for each direction. 
\end{remark}
\( P_{t}',P_{t}(A) \) are finite because there is only one regular geodesic in
each
homotopy class.
Clearly 
\[
P_{t}(A)\leq P_{t}'\leq P_{t}\]
 for any \( t. \) We will show that these are in fact
asymptotically equal.
\begin{lemma}
\[
P_{t}'\sim P_{t}.\]
\end{lemma}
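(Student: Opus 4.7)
The plan is to reduce the statement to a one-sided estimate and then exploit the already-established entropy gap between the geodesic flow and its restriction to the singular set. Since trivially $P'_t\le P_t$, it suffices to show that
\[
\frac{P_t-P'_t}{P_t}\longrightarrow 0\quad\text{as }t\to\infty.
\]
The first step is to identify what $P_t-P'_t$ counts. A homotopy class contributes to $P_t$ but not to $P'_t$ exactly when none of its closed-geodesic representatives is regular. Since rank is constant along any geodesic, this means every representative of such a class lies entirely in the closed $g$-invariant set $\mathbf{Sing}$. Thus $P_t-P'_t$ is bounded above by the number of homotopy classes of closed geodesics of length at most $t$ that are realized inside $\mathbf{Sing}$.

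The second step is to bring in the two quantitative inputs we already have. From Knieper's work cited in the remark following Babillot's mixing theorem we know that $h_s:=h(g\vert_{\mathbf{Sing}})<h$. I would use this to produce an estimate of the form
\[
\#\{\text{singular homotopy classes of length }\le t\}\;\le\;C_1\,e^{h_s t}
\]
for some constant $C_1$. Combined with Knieper's lower bound from Section~\ref{kniepersection}, namely $P_t\ge e^{ht}/(Ct)$, this yields
\[
\frac{P_t-P'_t}{P_t}\;\le\;CC_1\,t\,e^{-(h-h_s)t}\;\longrightarrow\;0,
\]
which is exactly what is required.

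The main obstacle, and the only nontrivial step, is justifying the exponential bound on the number of singular homotopy classes from the entropy inequality $h_s<h$. Passing from topological entropy on a compact invariant set to an honest count of periodic orbits is a Bowen-type result that normally requires some hyperbolicity, which is precisely what is missing on $\mathbf{Sing}$. I would handle this by exploiting the specific geometric structure of singular closed geodesics in nonpositive curvature rank one: each such geodesic lies in a flat subregion (a flat strip or higher-dimensional flat piece), and the corresponding conjugacy classes in $\pi_1(M)$ come from cyclic or $\mathbb{Z}^k$ subgroups associated to these flats. Counting translation lengths in such subgroups and combining with Manning's volume-growth argument applied inside $\mathbf{Sing}$ (whose volume entropy equals $h_s$) produces the desired exponential bound. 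Once this bound is in place, the conclusion is immediate from the calculation above.
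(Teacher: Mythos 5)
Your argument is in essence the paper's argument: both hinge on the entropy gap $h(g|_{\textbf{Sing}})<h$ and Knieper's lower bound $P_t\gtrsim e^{ht}/t$, and both conclude via the trivial inequality $P_t'\le P_t$. The paper's proof is a one-line citation to \cite{knie gafa}. The one place where your write-up diverges is that you flag as the ``main obstacle'' the passage from $h(g|_{\textbf{Sing}})<h$ to an honest exponential bound on the number of singular homotopy classes, and you propose to fill it yourself by analysing flat strips, the associated $\mathbb{Z}^k$ subgroups of $\pi_1(M)$, and a Manning-type volume argument restricted to $\textbf{Sing}$. Your caution is mathematically well-placed --- an upper bound of entropy type on periodic-orbit counts is not automatic for a non-hyperbolic invariant set --- but this is precisely the content of Knieper's result in \cite{knie gafa}, which controls the growth of singular closed geodesics directly, so the paper simply cites it rather than reproving it. Your flat-strip sketch is plausible as a rough outline of why such a bound should hold (and indeed closed singular geodesics do bound flats by the flat strip theorem), but as written it is not a complete argument: you would need to show that \emph{every} singular closed geodesic sits in a flat, control the multiplicity of homotopy classes arising from distinct flats, and actually carry out the Manning-style count inside $\textbf{Sing}$. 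None of that is needed once one invokes Knieper, so the cleanest version of your proof is obtained by replacing your speculative middle step with the citation the paper uses.
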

\begin{proof}
Singular geodesics have a smaller exponential growth rate than regular ones
because the  entropy of the singular
set is smaller than the topological entropy  (\cite{knie gafa})
whereas the entropy of the regular
set equals the topological
entropy.
\end{proof}
In the case that \( M \) is a surface, the growth rate of \(
\textrm{{\bf{Sing}}} \)
is in fact zero, since the existence of
a parallel perpendicular Jacobi field implies that the
largest
Liapunov exponent is zero.
\begin{defn}
Let \( \mu _{t} \) be the arclength measure on all regular periodic orbits
of length at most \( t, \) normalized to 1:
\begin{eqnarray*}
&\mathbf{P}_{t}&:=\{\textrm{regular closed geodesics of length}
\leq t\},\\
&\ \mathbf{P}_{t}(A)&:=\{\textrm{geodesics in }\mathbf{P}_{t}
\textrm{ which pass}\textrm{ through }A\},\\
&\mu _{t}&:=\frac{1}{\#\mathbf{P}_{t}}\sum _{c\in \mathbf{P}_{t}}
\frac{1}{\textrm{len}(c)}\delta _{c},\\
&\mu ^{A}_{t}&:=\frac{1}{\#\mathbf{P}_{t}(A)}\sum _{c\in
\mathbf{P}_{t}(A)}
\frac{1}{\textrm{len}(c)}\delta _{c}.
\end{eqnarray*}
Here $\delta _{c}$ is the length measure on $\dot c$. 
\end{defn}
\begin{thm}
\label{thm:m asym mu}Any weak limit \( \mu  \) of \( (\mu _{t})_{t>0} \)
is the measure of maximal entropy.
Moreover,  any weak limit \( \mu ^{A} \) of \( (\mu ^{A}_{t})_{t>0} \) 
is the measure of maximal entropy.
\end{thm}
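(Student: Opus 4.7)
The plan is to identify any weak limit $\mu$ of $(\mu_t)$ with the measure of maximal entropy $m$, combining the counting results from the previous subsections with the uniqueness of the MME (Knieper). First I would observe that each $\mu_t$ is manifestly $g$-invariant (being composed of arclength measures on flow-invariant periodic orbits, normalized to total mass one); hence every weak limit $\mu$ is a $g$-invariant Borel probability measure on the compact space $SM$. By Knieper's uniqueness theorem, it then suffices to show that $\mu$ coincides with $m$.

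The key computation evaluates $\mu_t(A)$ asymptotically for a fixed regular flow cube $A$. Stratifying the defining sum
\[
\mu_t(A) \;=\; \frac{1}{\#\mathbf{P}_t}\sum_{c \in \mathbf{P}_t} \frac{\len(c \cap A)}{\len(c)}
\]
by period windows $[s-\varepsilon, s+\varepsilon]$ partitioning $[0,t]$, the contribution from a single window is controlled as follows. The identification of orbit segments with full intersection components (Proposition \ref{prop:segs-asym-comps}) supplies $\cong N(A,s)$ contributing segments; the average thickness across such components is $\cong \varepsilon/2$ by Lemma \ref{lem:avgthickness}; and Proposition \ref{prop:N(A,t)} gives $N(A,s) \cong 2 e^{hs}m(A)$. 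Combining, each window contributes $\cong \varepsilon e^{hs}m(A)/s$, and summing across the partition yields
\[
\mu_t(A) \;\cong\; \frac{m(A)}{P_t'}\int_0^t \frac{e^{hs}}{s}\,ds.
\]

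The crucial feature of this estimate is that the prefactor $(P_t')^{-1}\int_0^t e^{hs}/s\,ds$ depends only on $t$, not on the particular cube $A$. Hence for any two regular flow cubes $A,B$, the ratio $\mu_t(A)/\mu_t(B) \cong m(A)/m(B)$, so the restriction of any weak limit $\mu$ to $\textrm{{\bf{Reg}}}$ is a scalar multiple $\kappa\cdot m|_{\textrm{{\bf{Reg}}}}$. To pin down $\kappa=1$ (and simultaneously $\mu(\textrm{{\bf{Sing}}})=0$), I would, for any $\eta>0$, cover $SM$ up to a set of $m$-measure less than $\eta$ by finitely many regular flow cubes $A_1,\ldots,A_N$ (possible since $m(\textrm{{\bf{Sing}}})=0$ and $\textrm{{\bf{Sing}}}$ is compact); applying the proportionality uniformly across these cubes together with the normalization $\mu(SM)=m(SM)=1$ then forces $\kappa=1$ and $\mu(\textrm{{\bf{Sing}}})=0$. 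The uniqueness of the MME gives $\mu=m$.

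The statement for $\mu^A$ follows along the same lines: orbits in $\mathbf{P}_t(A)$ are automatically regular (since $A\subset\textrm{{\bf{Reg}}}$ and rank is constant along geodesics), and the proportion of regular orbits not passing through $A$ is asymptotically negligible, as one sees a posteriori from the $\mu_t$ result applied to $A$. The main obstacle is Step 3, pinning down $\kappa=1$ and controlling any mass that the weak limit might otherwise assign to $\textrm{{\bf{Sing}}}$; here the strict entropy inequality $h(g|_{\textrm{{\bf{Sing}}}})<h$ from \cite{knie gafa}, combined with a careful covering argument in regular flow cubes, ensures that singular orbits are too sparse to carry significant weight in the limit.
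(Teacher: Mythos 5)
Your proposal takes a genuinely different route from the paper's, and it contains a gap at the crucial step.

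The paper's proof is much shorter and leans entirely on Knieper's entropy machinery: Knieper showed in \cite{knie ann} that weak limits of the measures $\mu_{t_k}$ (after discarding singular geodesics, which is possible because $h(g|_{\text{{\bf Sing}}})<h$) have entropy $h$, hence coincide with $m$ by uniqueness of the measure of maximal entropy; for $\mu^A$ the same argument works because the lower bound $P_t(A)\geq Ce^{ht}/t$ from \cite{knie ann} shows that any weak limit of $\mu^A_{t_k}$ still has full entropy. Your approach instead tries to compute $\mu_t(A)$ directly from Propositions \ref{prop:segs-asym-comps}, \ref{prop:N(A,t)} and Lemma \ref{lem:avgthickness}. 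This is not circular (those propositions come earlier and do not use Theorem \ref{thm:m asym mu}), and the proportionality $\mu_t(A)/\mu_t(B)\cong m(A)/m(B)$ on regular flow cubes is a reasonable consequence of that machinery. A minor issue: you quote the average intersection thickness $\varepsilon/2$ as though it were the average length of an orbit segment inside $A$; a geodesic that crosses $A$ (entering at $\partial_0A$ and leaving at $\partial_\varepsilon A$) spends time $\varepsilon$, not $\varepsilon/2$, in $A$ --- the factor $1/2$ in Proposition \ref{prop:N(A,t)} comes from the average depth of intersection, not from the segment lengths. This happens to cancel against a dropped factor of two in your Riemann-sum conversion, so your final display is fine, but the bookkeeping as stated is off.

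The genuine gap is Step 3. From the proportionality on regular cubes you can only conclude $\mu|_{\text{{\bf Reg}}}=\kappa\, m|_{\text{{\bf Reg}}}$ for some $\kappa$. The covering argument gives $\kappa\leq 1$ (since $\mu$ is a probability measure), but there is nothing forcing $\kappa\geq1$ unless you first rule out a positive amount of mass of $\mu$ landing on $\text{{\bf Sing}}$. Since $\text{{\bf Sing}}$ is closed and the regular periodic orbits can approach it arbitrarily closely, weak$^*$ limits of the $\mu_t$ (which individually give $\text{{\bf Sing}}$ measure zero) can in principle place mass there. You gesture at the entropy gap $h(g|_{\text{{\bf Sing}}})<h$ as the missing ingredient, and indeed that is the missing ingredient, but using it amounts to re-doing the paper's proof: one must show that any weak limit $\mu$ has entropy $h$, so that if $\mu=\kappa m + (1-\kappa)\nu$ with $\nu$ concentrated on $\text{{\bf Sing}}$ then $h_\mu\leq \kappa h + (1-\kappa)h(g|_{\text{{\bf Sing}}})<h$, a contradiction unless $\kappa=1$. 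Without this step the argument is incomplete; with it, the detailed computation in Step 2 becomes unnecessary, since the entropy argument plus Knieper's uniqueness already identifies the limit. In short, you have the right pieces but not yet a proof that closes without appealing to the entropy argument the paper itself uses.
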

 In other words, for any $t_k\to\infty$ such that
 \( (\mu ^{A}_{t_{k}})_{t_{k}\in \mathbb
{R}} \) converges weakly
and for any measurable \( U \) the following holds: 
\[
\lim _{k\rightarrow \infty }\mu ^{A}_{t_{k}}(U)=m(U).\]
 Similarly with \( \mu ^{A} \) replaced by \( \mu . \)

\begin{proof}
Knieper showed in \cite{knie ann} that \( m \) 
can be obtained as a weak limit of
the measures \( \mu _{t_{k}} \) 
which are Borel probability measures supported
on \( \mathbf{P}_{t_{k}} \); see also 
\cite{Pol}.  The singular closed geodesics can be neglected
because the singular set has entropy smaller than \( h. \) Hence any weak limit
of \( \mu _{t} \) equals \( m. \)

Since 
\[
P_{t}(A)\geq C\frac{e^{ht}}{t}\]
 (\cite[Remark after Theorem 5.8]{knie ann}), any weak limit of the measures
\( \mu ^{A}_{t_{k}} \) concentrated on \( \mathbf{P}_{t_{k}}(A) \) has entropy
\( h. \) Since the measure of maximal entropy is unique, any such weak limit
equals \( m \).
\end{proof}
\begin{remark}
\label{rem:m asym mu}This means that we can approximate the measure of 
maximal entropy \( m \)
of a measurable set  by its \( \mu _{t_{k}} \)-measure for \( k \)
sufficiently large. Moreover, when counting orbits, an arbitrarily  small
regular local product
cube \( A \) will suffice to count periodic orbits in such a way that the
fraction
of those not counted will converge to zero as the period of these orbits
becomes
large. We use this fact in the proof of Theorem \ref{thm:P t,e}.
\end{remark}
\begin{corollary}
\(
P_{t}(A)\sim P_{t}.\)
\end{corollary}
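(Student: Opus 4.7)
The plan is to argue by contradiction, using both conclusions of Theorem \ref{thm:m asym mu} together with the reduction $P_t'\sim P_t$ already established. Set $\alpha_t:=P_t(A)/P_t'$; since $P_t'\sim P_t$, it suffices to prove $\alpha_t\to 1$. Observe first that $\alpha_t$ is bounded away from $0$: the lower bound $P_t(A)\geq C e^{ht}/t$ used in the proof of Theorem \ref{thm:m asym mu} together with Knieper's upper bound $P_t'\leq C' e^{ht}/t$ (see Section \ref{kniepersection}) give $\alpha_t\geq c_0>0$, so the probability measures introduced below are well defined.

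Suppose for contradiction that $\alpha_{t_k}\to \alpha<1$ along some subsequence $t_k\to\infty$. For $k$ large, the complement $\mathbf{P}_{t_k}\setminus \mathbf{P}_{t_k}(A)$ is nonempty, so the normalized arclength measure
\[
\mu_t^{\overline A}:=\frac{1}{\#\mathbf{P}_t-\#\mathbf{P}_t(A)}\sum_{c\in \mathbf{P}_t\setminus \mathbf{P}_t(A)}\frac{1}{\textrm{len}(c)}\delta_c
\]
on the regular orbits \emph{avoiding} $A$ is a Borel probability measure on $SM$. By weak compactness of the space of Borel probabilities on the compact space $SM$, pass to a further subsequence along which $\mu_{t_k}^{\overline A}\to \nu$ weakly for some probability $\nu$. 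From the decomposition
\[
\mu_t=\alpha_t\,\mu_t^A+(1-\alpha_t)\,\mu_t^{\overline A},
\]
together with $\mu_t\to m$ and $\mu_t^A\to m$ (the ``every weak limit is $m$'' conclusion of Theorem \ref{thm:m asym mu}, combined with weak compactness, yields full convergence of both sequences), taking limits along our subsequence gives $m=\alpha m+(1-\alpha)\nu$. Since $\alpha<1$, this forces $\nu=m$.

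However, each summand defining $\mu_t^{\overline A}$ is supported in $SM\setminus A$, so $\mu_{t_k}^{\overline A}(A)=0$ for every $k$; the portmanteau inequality for the open set $A$ therefore gives $\nu(A)\leq\liminf_{k\to\infty}\mu_{t_k}^{\overline A}(A)=0$. On the other hand $m(A)>0$: the Busemann density $\mu_p$ has full support on $\tilde M(\infty)$ by Knieper's construction, and in Definition \ref{def:knieper def} the endpoint pairs $(\xi,\eta)$ of geodesics piercing the open regular cube $A$ form a set of positive $\mu_p\otimes\mu_p$-measure with strictly positive integrand. This contradicts $\nu=m$, so no such $\alpha<1$ can exist and $\alpha_t\to 1$, proving $P_t(A)\sim P_t'\sim P_t$. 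The main obstacle I anticipate is verifying $m(A)>0$ rigorously from Definition \ref{def:knieper def}; the rest of the argument is a routine convex-combination and weak-compactness manipulation.
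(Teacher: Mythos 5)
Your argument is correct and is a careful expansion of the paper's one-line proof, which also reasons that the normalized measures on $\mathbf{P}_t\setminus \mathbf{P}_t(A)$ would otherwise have to converge weakly to $m$, contradicting $m(A)>0$. Your convex-combination formulation $\mu_t=\alpha_t\mu_t^A+(1-\alpha_t)\mu_t^{\overline A}$ is a clean way to make that precise without having to re-run the entropy argument for the complement measures.

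One small technical slip: the flow cube $A=\overline{B^s}(g^{[0,\varepsilon]}\overline{B^u_\delta}(v_0))$ is \emph{compact}, not open, so the portmanteau inequality $\nu(U)\leq\liminf_k\mu_{t_k}^{\overline A}(U)$ for open $U$ cannot be applied directly to $A$; for closed sets the inequality runs the other way and gives nothing. The fix is immediate: a geodesic in $\mathbf{P}_t\setminus\mathbf{P}_t(A)$ avoids $A$, hence avoids the nonempty open set $\operatorname{int}(A)$, so $\mu_{t_k}^{\overline A}(\operatorname{int}(A))=0$ and portmanteau yields $\nu(\operatorname{int}(A))=0$. Since $m$ has full support (on $\textrm{\bf Reg}$, where $A$ lives) --- which is your $m(A)>0$ concern and is indeed justified by the full support of the Busemann density in Knieper's construction --- we have $m(\operatorname{int}(A))>0$, and the contradiction with $\nu=m$ goes through as you intended. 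With this adjustment the proof is complete.
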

\begin{proof}
By theorem \ref{thm:m asym mu} 
the measure on the geodesics in $\mathbf{P}_t\setminus \mathbf{P}_t(A)$ (which
assigns zero measure to $A$) would otherwise also converge weakly to the
measure of maximal entropy.
\end{proof}

\begin{defn}
Let \( P_{t,\varepsilon } \) be the number of {regular} geodesics
with length in \( (t-\varepsilon ,t+\varepsilon ] \). 

\end{defn}
Again, \( P_{t,\varepsilon } \) is finite because there is only one regular geodesic in
each
homotopy class.
\begin{thm}
\label{thm:P t,e}The number \( P_{t,\varepsilon } \)  
of regular closed geodesics with prescribed length
is given by the asymptotic formula
\[
P_{t,\varepsilon }\cong \frac{\varepsilon N(A,t)}{t\cdot m(A)}.
\]
I.e. $\exists K<\infty\ \forall \alpha>0\ \exists\varepsilon_0>0
\ \forall \varepsilon\in(0,\varepsilon_0),\ A=A(\varepsilon)
\ \exists t_0\ \forall t>t_0: $
\[
\left|\ln\frac
{P_{t,\varepsilon }\cdot t\cdot m(A)}
{\varepsilon N(A,t)}
\right|
<K\varepsilon+\alpha.
\]
\end{thm}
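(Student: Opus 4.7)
The plan is to relate $P_{t,\varepsilon}$ to $N(A,t)$ through the intermediate quantity $G(t,\varepsilon)$ from Definition \ref{def:G(t,e)} via a visit-counting argument based on equidistribution. By Proposition \ref{prop:segs-asym-comps} we already have $G(t,\varepsilon)\cong N(A,t)$, so it suffices to show $P_{t,\varepsilon}\cong \varepsilon G(t,\varepsilon)/(t\cdot m(A))$.

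First I would decompose $G(t,\varepsilon)$ according to the underlying closed geodesic. By Lemmas \ref{lem:segtocomp} and \ref{lem:comptoseg}, each element of $\mathbf{G}(t,\varepsilon)$ corresponds to a pair $(c,\mathrm{visit})$ where $c$ is a regular closed geodesic of length $L_c\in(t-\varepsilon,t+\varepsilon]$ meeting $A$ and the visit is one of the maximal orbit pieces of $c$ inside $A$. Writing $\mathbf{P}_{t,\varepsilon}(A)$ for the set of such $c$ and $n_c$ for the number of visits of $c$ to $A$, this gives
\[
G(t,\varepsilon)=\sum_{c\in \mathbf{P}_{t,\varepsilon}(A)} n_c.
\]
Each visit contributes arclength exactly $\varepsilon$ up to a boundary error which (summed over $c$) is negligible by Lemma \ref{lem:side-to-side} and Theorem \ref{thm:stable-boundary-errors-are-small}, so $\sum_c n_c\varepsilon\cong\sum_c \delta_c(A)$.

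Next I would invoke Theorem \ref{thm:m asym mu} in the form appropriate for $\mathbf{P}_{t,\varepsilon}$: any weak limit of the normalized arclength measures supported on $\mathbf{P}_{t,\varepsilon}(A)$ still has entropy $h$ (since the lower bound $\#\mathbf{P}_{t,\varepsilon}(A)\gtrsim e^{ht}/t$ follows from Knieper's work in the same way as for $\mathbf{P}_t(A)$), hence equals $m$ by uniqueness. Evaluating at the indicator $\chi_A$ and using $L_c=t+O(\varepsilon)$ yields
\[
\sum_{c\in\mathbf{P}_{t,\varepsilon}(A)}\delta_c(A)\cong t\cdot m(A)\cdot\#\mathbf{P}_{t,\varepsilon}(A),
\]
so $G(t,\varepsilon)\cong \#\mathbf{P}_{t,\varepsilon}(A)\cdot t\cdot m(A)/\varepsilon$. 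The argument of the corollary after Remark \ref{rem:m asym mu} carries over to show $\#\mathbf{P}_{t,\varepsilon}(A)\sim P_{t,\varepsilon}$ (otherwise the missing mass would give a second maximal-entropy invariant probability measure avoiding $A$). Combining with Proposition \ref{prop:segs-asym-comps} gives
\[
P_{t,\varepsilon}\cong\frac{\varepsilon G(t,\varepsilon)}{t\cdot m(A)}\cong\frac{\varepsilon N(A,t)}{t\cdot m(A)}.
\]

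I expect the main obstacle to be quantifying the equidistribution step at the precision demanded by $\cong$: weak convergence $\mu^A_{t,\varepsilon}\to m$ is only qualitative, whereas the target statement requires a multiplicative error of the form $e^{K\varepsilon+\alpha}$. One must argue that $\mu^A_{t,\varepsilon}(A)\to m(A)$ with an $\varepsilon$-uniform error of the right shape, and then absorb the $O(\varepsilon^2)$ boundary contributions (coming from visits that begin or end within $\varepsilon^2$ of $\partial A$, together with the geodesics passing through the stable end) into the $K\varepsilon$ term. The estimates of Lemma \ref{lem:side-to-side}, Lemma \ref{lem:avgthickness}, and Theorem \ref{thm:stable-boundary-errors-are-small}, which were used to prove Proposition \ref{prop:segs-asym-comps}, are precisely what makes these boundary corrections fit within the $\cong$ framework.
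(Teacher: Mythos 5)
Your proposal is essentially the same argument as the paper's: decompose the orbit segments in $A$ by the closed geodesic they lie on, use the equidistribution theorem (Theorem \ref{thm:m asym mu}) to identify the time a typical geodesic of length $\approx t$ spends in $A$ with $t\cdot m(A)$, convert this to a count of $\varepsilon$-length segments, and close with Proposition \ref{prop:segs-asym-comps}. The paper phrases this slightly more top-down ("a typical geodesic of length $t$ has $\cong m(A)t/\varepsilon$ segments in $A$") while you build up $G(t,\varepsilon)$ as $\sum_c n_c$, but the mathematics, the lemmas invoked, and the handling of the qualitative-vs-quantitative gap in the equidistribution step are the same.
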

\begin{proof}
By Theorem \ref{thm:m asym mu}, for a typical closed geodesic \( c \) with 
sufficiently large length,
\[
\frac{1}{\textrm{len}(c)}\delta_c(A)=
\frac{1}{\textrm{len}(c)}\int _{\dot{c}\cap A}d\textrm{len}\cong m(A).\]
 Here ``typical'' means that the number of closed geodesics of length at most
\( t \) that have this property is asymptotically the same as the number of
all closed geodesics of length at most \( t; \) in other words, the ratio
tends to 1. That means: $\exists K<\infty\ \forall \alpha>0\ \exists\varepsilon_0>0
\ \forall \varepsilon\in(0,\varepsilon_0),\ A=A(\varepsilon)
\ \exists t_0\ \forall t>t_0: $
$$
\left|\ln\left({
\frac{1}{m(A)\cdot\#\mathbf{P}_{t}}\sum _{c\in \mathbf{P}_{t}}
\frac{1}{\textrm{len}(c)}\delta _{c}
}{
}
\right)\right|<K\varepsilon+\alpha.
$$

Hence such a geodesic of length $t$ (which consists of \( t/\varepsilon  \) 
segments of length \( \varepsilon 
\))
will have asymptotically \( m(A)t/\varepsilon  \) segments 
of length \( \varepsilon  \)
intersecting
\( A. \) Thus 
\[
P_{t,\varepsilon }\cong \frac{\varepsilon G(t,\varepsilon)}{tm(A)},
\]
i.e., $\exists K<\infty\ \forall \alpha>0\ \exists\varepsilon_0>0
\ \forall \varepsilon\in(0,\varepsilon_0),\ A=A(\varepsilon)
\ \exists t_0\ \forall t>t_0:$ $$ 
\left|\ln\frac{
tm(A)P_{t,\varepsilon }
}{
\varepsilon G(t,\varepsilon)
}\right|
<K\varepsilon+\alpha,$$
where $G$ is as in  Definition \ref{def:G(t,e)}. 
The statement of 
Proposition \ref{prop:segs-asym-comps} then shows the claim.
\end{proof}
\begin{remark}
It suffices to consider closed orbits which are not multiple
iterates of some other closed orbit for the following reason:
If $H(t,k)$ is the number of periodic orbits passing through $A$ of length at
most $t$ which are $k$-fold iterates, then $H(t,k)=0$ for $k>t/\text{inj}(M)$.
Thus the number of  segments of $A$ which are transversed by all multiple
iterates is at most $\sum_{k=2}^{\lfloor t/\text{inj}(M)\rfloor} kH(t,k).$ By
Knieper's multiplicative estimate (see Section \ref{kniepersection}),
 this number is at most
$\text{const}\cdot\sum_{k=2}^{\lfloor t/\text{inj}(M)\rfloor} ke^{ht/k}$, thus
at most 
$\text{const}\cdot t^2e^{ht/2}.$ This contributes only a zero
asymptotic 
fraction of the segments and can thus be ignored.
\end{remark}
Proposition \ref{prop:N(A,t)} and Theorem \ref{thm:P t,e} combined yield:
\begin{corollary}\label{cor:usenext}
\[
P_{t,\varepsilon }(A)\cong \frac{2\varepsilon e^{ht}}{t}.
\]
I.e., $\exists K<\infty\ \forall \alpha>0\ \exists\varepsilon_0>0
\ \forall \varepsilon\in(0,\varepsilon_0),\ A=A(\varepsilon)
\ \exists t_0\ \forall t>t_0: $
\(
\left|\ln(
{tP_{t,\varepsilon }(A)}
/{2\varepsilon e^{ht}}
)\right|
<K\varepsilon+\alpha.
\)
\hspace*{\fill}\qed
\end{corollary}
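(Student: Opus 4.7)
The plan is to combine Theorem \ref{thm:P t,e} and Proposition \ref{prop:N(A,t)} by transitivity of the equivalence relation ``$\cong$'' established in Lemma \ref{lem:relationsarerelations}.

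First I would show $P_{t,\varepsilon}\cong 2\varepsilon e^{ht}/t$. Theorem \ref{thm:P t,e} gives $P_{t,\varepsilon}\cong \varepsilon N(A,t)/(t\,m(A))$, and Proposition \ref{prop:N(A,t)} gives $N(A,t)\cong 2 e^{ht}m(A)$. Multiplying both sides of the latter by the strictly positive quantity $\varepsilon/(t\,m(A))$ preserves the $\cong$-relation, since the ratio of the two resulting expressions equals the original ratio $N(A,t)/(2e^{ht}m(A))$, on which the logarithmic bound $<K\varepsilon+\alpha$ is already provided by Proposition \ref{prop:N(A,t)}. Hence
\[
\frac{\varepsilon N(A,t)}{t\,m(A)}\cong \frac{2\varepsilon e^{ht}}{t},
\]
and transitivity (Lemma \ref{lem:relationsarerelations}) yields $P_{t,\varepsilon}\cong 2\varepsilon e^{ht}/t$.

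The second step is to replace $P_{t,\varepsilon}$ with $P_{t,\varepsilon}(A)$. Here I would invoke the analogue of the earlier Corollary $P_t(A)\sim P_t$, applied to the length window $(t-\varepsilon,t+\varepsilon]$ rather than to $[0,t]$: if a non-vanishing fraction of the geodesics counted by $P_{t,\varepsilon}$ failed to intersect $A$, then the weak limit of the normalized arclength measures supported on those missing geodesics would, by Theorem \ref{thm:m asym mu}, equal the (unique) measure of maximal entropy $m$, yet would simultaneously assign zero mass to the nonempty open set $A$, a contradiction. This gives $P_{t,\varepsilon}(A)\sim P_{t,\varepsilon}$, and one further use of transitivity of $\cong$ produces the claim.

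I do not expect a real obstacle; the corollary is essentially arithmetic once the two earlier results are in hand. The only mild subtlety is that the nested quantifier structure (over $K$, $\alpha$, $\varepsilon_0$, $t_0$) in the definition of $\cong$ must compose correctly across the chain of three estimates, but this composition is precisely the content of Lemma \ref{lem:relationsarerelations} and so requires no additional work beyond citing it.
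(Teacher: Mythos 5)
Your proposal is correct, and the first step is precisely the paper's own (one-line) proof: the displayed relation follows by multiplying Proposition \ref{prop:N(A,t)} by $\varepsilon/(t\,m(A))$ and composing with Theorem \ref{thm:P t,e} via the transitivity established in Lemma \ref{lem:relationsarerelations}. The quantifier bookkeeping you worry about is exactly what that lemma handles, so there is nothing more to check there.

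The second step in your proposal is something the paper silently glosses over, and you are right to raise it: Theorem \ref{thm:P t,e} concerns $P_{t,\varepsilon}$, while the corollary is stated for $P_{t,\varepsilon}(A)$, and the paper offers no explicit bridge. Your suggested bridge via Theorem \ref{thm:m asym mu} is morally correct but, as literally cited, that theorem speaks of the measures $\mu_t$ and $\mu_t^A$ built from \emph{all} regular closed geodesics of length at most $t$, not from the geodesics in a length window $(t-\varepsilon,t+\varepsilon]$. One needs the corresponding statement for window measures; this follows by the same entropy argument (once one knows $P_{t,\varepsilon}$ has exponential growth rate $h$, which Knieper's bounds or Theorem \ref{thm:P t,e} itself provide), but it is not literally what Theorem \ref{thm:m asym mu} says. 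Alternatively, one can observe that the ``typical'' argument in the proof of Theorem \ref{thm:P t,e} already forces $P_{t,\varepsilon}(A)\cong P_{t,\varepsilon}$ as a byproduct, since the geodesics counted there as typical in particular meet $A$. Either way, your instinct to close this gap is sound; just be aware that the cited theorem, as stated, does not apply verbatim to the length-window measures and needs the small extension you sketch.
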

\subsection{Proof of the main result}
The desired asymptotic formula is now derived:
\begin{thm}[Precise asymptotics for periodic orbits]
\label{thm:P asym e^ht/ht}Let \( M \) be a compact Riemannian manifold of
nonpositive
curvature whose rank is one. Then the number \( P_{t} \) of homotopy classes
of periodic orbits of length at most \( t \) for the geodesic flow  is
asymptotically given by the
formula
\[
P_{t}\sim \frac{e^{ht}}{ht}\]
 where \( \sim  \) means that the quotient converges to \( 1 \) as \( t\to
\infty . \)
\end{thm}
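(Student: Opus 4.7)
The plan is to obtain $P_t$ by summing the window-count asymptotics $P_{t_k,\varepsilon}$ over a partition of $(0,t]$ into windows of length $2\varepsilon$, recognize the resulting expression as a Riemann sum for $\int^t e^{hs}/s\,ds$, and evaluate this integral. Corollary~\ref{cor:usenext} (and the preceding theorem) already gives the window-level asymptotic $P_{t,\varepsilon}\cong 2\varepsilon e^{ht}/t$ in the sense of ``$\cong$''; the remaining task is essentially to integrate this relation over the length parameter and then pass from ``$\cong$'' to ``$\sim$''.

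Fix small $\varepsilon>0$ and set $t_k:=(2k-1)\varepsilon$ for $k=1,\dots,L$ with $L:=\lfloor t/(2\varepsilon)\rfloor$. The half-open windows $(t_k-\varepsilon,t_k+\varepsilon]$ partition $(0,2L\varepsilon]$; the leftover interval $(2L\varepsilon,t]$ has length less than $2\varepsilon$ and contributes at most one extra term of order $e^{ht}/t$, which is negligible, and geodesics of length bounded by some fixed constant contribute only $O(1)$. Counting only regular geodesics (which by Lemma before loses at most a $\sim$-negligible proportion since $P_t'\sim P_t$), one gets
\[
P_t \;=\; \sum_{k=1}^{L} P_{t_k,\varepsilon} \;+\; O\!\left(\frac{e^{ht}}{t}\right).
\]

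Substituting $P_{t_k,\varepsilon}\cong 2\varepsilon e^{ht_k}/t_k$ turns the main term into the midpoint Riemann sum
\[
\sum_{k=1}^{L} \frac{2\varepsilon\,e^{ht_k}}{t_k}\;\approx\;\int_{c}^{t}\frac{e^{hs}}{s}\,ds
\]
for any fixed lower cutoff $c>0$. A single integration by parts, $\int e^{hs}/s\,ds=e^{hs}/(hs)+\int e^{hs}/(hs^2)\,ds$ (or equivalently L'H\^opital applied to the $\infty/\infty$ quotient $\int^t e^{hs}/s\,ds$ versus $e^{ht}/(ht)$), shows that
\[
\int^{t}\frac{e^{hs}}{s}\,ds\;\sim\;\frac{e^{ht}}{ht}.
\]
Standard error analysis for midpoint Riemann sums with step $2\varepsilon$ then shows that the sum equals the integral up to a multiplicative factor that tends to $1$ as $\varepsilon\to 0$, uniformly for $t$ large.

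The hard part is propagating the multiplicative ``$\cong$''-error uniformly across the $L\approx t/(2\varepsilon)$ summands whose number grows with $t$. This is the reason for the careful bookkeeping of the constant $K$ and the error $\alpha$ in the definition of $\cong$: Corollary~\ref{cor:usenext} provides a fixed constant $K$ (independent of $t$ and $\varepsilon$) and, for every $\alpha>0$, a small $\varepsilon_0$ and large $t_0$ such that $|\ln(P_{t_k,\varepsilon}\,t_k/(2\varepsilon e^{ht_k}))|<K\varepsilon+\alpha$ uniformly in $k$. Hence each summand is sandwiched between $e^{\pm(K\varepsilon+\alpha)}\cdot 2\varepsilon e^{ht_k}/t_k$, and the same factor sandwiches the whole sum term by term. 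Combining this with the Riemann-sum convergence and the integral asymptotic yields
\[
e^{-(K\varepsilon+\alpha)}\;\le\;\liminf_{t\to\infty}\frac{P_t}{e^{ht}/(ht)}\;\le\;\limsup_{t\to\infty}\frac{P_t}{e^{ht}/(ht)}\;\le\;e^{K\varepsilon+\alpha}.
\]
Sending first $\alpha\downarrow 0$ and then $\varepsilon\downarrow 0$ collapses both bounds to $1$, establishing $P_t\sim e^{ht}/(ht)$.
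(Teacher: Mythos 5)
Your proposal is correct and follows essentially the same route as the paper: sum the window-count asymptotics $P_{t_k,\varepsilon}\cong 2\varepsilon e^{ht_k}/t_k$ from Corollary~\ref{cor:usenext} over windows of length $2\varepsilon$, recognize the result as a Riemann sum for $\int e^{hx}/x\,dx\sim e^{ht}/(ht)$, and observe that the left side $P_t$ has no $\varepsilon$-dependence so the $\cong$-relation upgrades to $\sim$ by letting $\varepsilon\to 0$. Your explicit sandwich argument with the factor $e^{\pm(K\varepsilon+\alpha)}$ and the $\liminf/\limsup$ bounds is a welcome spelling-out of the uniformity across the $\approx t/(2\varepsilon)$ summands, which the paper handles more tersely via the $\cong$-calculus.
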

\begin{proof}
We use the standard limiting process 
\[
\int _{a}^{b}f(x)dx\bowtie \sum _{i=\lfloor a/2\varepsilon \rfloor }
^{\lfloor b/2\varepsilon
\rfloor }2\varepsilon f((2i+1)\varepsilon )
\]
 for suitable functions \( f \) (in particular, if \( f \) is continuous and
piecewise
monotone, as is the case for $f(x)=e^{hx}/x$); 
since this is elementary, ``$\bowtie$'' requires no explanation in long form
here. 
Choose some fixed sufficiently
large
 number \( t_0>0. \) Since
we can ignore all closed geodesics of length at most \( t_0 \) for the
asymptotics,
we see that for \( t>t_0 \) by Corollary \ref{cor:usenext} we get
\begin{eqnarray*}
P_{t}' & \cong  & P_{t}(A)
  \cong   \sum _{i=\lfloor t_0/2\varepsilon \rfloor }
 ^{\lfloor t/2\varepsilon \rfloor
}P_{(2i+1)\varepsilon ,\varepsilon }
  \cong   \sum _{i=\lfloor t_0/2\varepsilon \rfloor }
 ^{\lfloor t/2\varepsilon \rfloor
 }2\varepsilon \frac{e^{h(2i+1)\varepsilon}}{(2i+1)\varepsilon}\\
 & \cong  & \int _{t_0}^{t}\frac{e^{hx}}{x}dx
\   =\   \left.\frac{e^{hx}}{hx}\right|_{t_0}^{t}+\int
_{t_0}^{t}\frac{e^{hx}}{hx^{2}}dx
\  \cong \   \frac{e^{ht}}{ht}-\frac{e^{ht_0}}{ht_0}\\
 & \cong  & \frac{e^{ht}}{ht}.
\end{eqnarray*}
In more detail, $\exists K<\infty\ \forall \alpha>0\ \exists\varepsilon_0>0
\ \forall \varepsilon\in(0,\varepsilon_0),
\ \exists t_0\ \forall t>t_0: $ 
$$
\left|\ln
\frac{ htP_t'}
{ e^{ht}}
\right|
<K\varepsilon+\alpha.
$$

Note that
there is no longer any dependence
on \( \varepsilon  \)   and that  \( P_{t}'\sim P_{t} \). 
Hence\[
P_{t}\sim \frac{e^{ht}}{ht}.\]This concludes the proof.
\end{proof}

\end{document}